\theoremstyle{plain}
\newtheorem{theorem}{Theorem}
\newtheorem{lemma}[theorem]{Lemma}
\newtheorem{proposition}[theorem]{Proposition}
\theoremstyle{definition}
\newtheorem{definition}[theorem]{Definition}
\theoremstyle{remark}
\newtheorem{remark}[theorem]{Remark}
\newcommand{\R}{\mathbb R}
\newcommand{\C}{\mathbb C}
\newcommand{\N}{\mathbb N}
\newcommand{\Z}{\mathbb Z}
\newcommand{\Q}{\mathbb Q}
\newcommand{\h}{\mathcal H}
\newcommand{\A}{\mathcal A}
\newcommand{\brt}{{\mathcal G}}
\newcommand{\rt}{{\mathcal I}}
\newcommand{\der}{\mathrm{d}}
\newcommand{\eps}{\varepsilon}
\renewcommand{\phi}{\varphi}
\newcommand{\abs}[1]{\left\lvert #1 \right\rvert}
\newcommand{\aabs}[1]{\left\| #1 \right\|}
\newcommand{\Order}{\mathcal O}
\newcommand{\order}{o}
\newcommand{\id}{\operatorname{id}}
\newcommand{\ip}[2]{\left\langle#1,#2\right\rangle}
\newcommand{\Der}[1]{\frac{\der}{\der #1}}
\newcommand{\roo}[1]{{\tilde{#1}}}
\newcommand{\Lip}{\operatorname{Lip}}
\newcommand{\lip}{\operatorname{lip}}
\newcommand{\lin}{\mathcal L}
\newcommand{\prt}{\mathcal P}
\title[Abel and X-ray transforms in low regularity]{Abel transforms with low regularity with applications to X-ray tomography on spherically symmetric manifolds}
\author{Maarten V. de Hoop}
\thanks{Department of Computational and Applied Mathematics, Rice University}
\email{mdehoop@rice.edu}
\author{Joonas Ilmavirta}
\thanks{Department of Mathematics and Statistics, University of Jyv\"askyl\"a}
\email{joonas.ilmavirta@jyu.fi}
\date{\today}
\begin{document}

\begin{abstract}
We study ray transforms on spherically symmetric manifolds with a piecewise $C^{1,1}$ metric.
Assuming the Herglotz condition, the X-ray transform is injective on the space of $L^2$ functions on such manifolds.
We also prove injectivity results for broken ray transforms (with and without periodicity) on such manifolds with a $C^{1,1}$ metric.
To make these problems tractable in low regularity, we introduce and study a class of generalized Abel transforms and study their properties.
This low regularity setting is relevant for geophysical applications.
\end{abstract}

\maketitle

\tableofcontents

\section{Introduction}
\label{sec:intro}


Our aim is to study geodesics and ray transforms on a spherically symmetric manifold.
More specifically, our manifold is the Euclidean annulus $M=\bar B(0,1)\setminus\bar B(0,R)\subset\R^n$, $R\in(0,1)$ and $n\geq2$, with the metric $g(x)=c^{-2}(\abs{x})e(x)$, where~$e$ is the standard Euclidean metric and $c\colon(R,1]\to(0,\infty)$ is a piecewise~$C^{1,1}$ function satisfying the Herglotz condition (see definition~\ref{def:herglotz}).
A piecewise~$C^{1,1}$ function may have a finite number of jump discontinuities.

By a geodesic we mean a maximal unit speed geodesic on the Riemannian manifold~$(M,g)$ with endpoints at the outer boundary $\partial M\coloneqq\partial B(0,1)$.
One of the problems we study is to recover a function $f\in L^2(M)$ from its integrals over all geodesics.
The same problem with $f\in C^\infty$ and $c\in C^\infty$ was considered by Sharafutdinov~\cite{S:rce-tensor}.
For $f\in L^2$ and $c\in C^\infty$ the result follows from the local support theorem by Uhlmann and Vasy~\cite{UV:local-x-ray} since the Herglotz condition is equivalent with the foliation condition in spherical symmetry.
Reducing regularity introduces technical difficulties but makes the problem more relevant for seismic imaging.

In addition to the X-ray transform, we consider the broken ray transform and the periodic broken ray transform.
In order to prove injectivity results for these transforms, we need various integral transforms: Abel-type transforms, the Fourier series, the Funk transform, and the planar average ray transform.

See section~\ref{sec:results} for an overview of our methods and results.

\subsection{The Herglotz condition}

We now define precisely what we mean by the Herglotz condition (named after Herglotz~\cite{H:kinematic}) and geodesics in the low regularity setting.
We will also introduce so-called countable and finite conjugacy conditions.

\begin{definition}
\label{def:herglotz}
By a piecewise~$C^{1,1}$ function satisfying the \emph{Herglotz condition} we mean a function $c\colon(R,1]\to(0,\infty)$ satisfying the following:
\begin{itemize}
\item
The interval~$(R,1]$ is a finite disjoint union of intervals~$(a,b]$ so that~$c$ is~$C^{1,1}$ regular on each such interval.
\item
On each subinterval~$(a,b]$ the function satisfies the Herglotz condition
\begin{equation}
\label{eq:herglotz}
\Der{r}\left(\frac{r}{c(r)}\right)>0
.
\end{equation}
\item
At endpoints~$r$ of the subintervals (but not of the whole interval)
\begin{equation}
\label{eq:herglotz-jump}
\limsup_{s\to r+}c(s)
\leq
c(r)
.
\end{equation}
(The function~$c$ is upper semicontinuous from the right.)
\end{itemize}
\end{definition}

The jump condition~\eqref{eq:herglotz-jump} simply requires the inequality~\eqref{eq:herglotz} when the left-hand side is interpreted as a signed measure or a distribution rather than a function.
The Herglotz condition for regular~$c$ means that all geodesics reach the boundary, and with less regularity we have to exclude trapping due to total internal reflection.
In other words, the Herglotz condition is equivalent with the manifold being non-trapping.

Consider a subinterval $(a,b]\subset(R,1]$ with $a\neq R$.
By~\eqref{eq:herglotz} the function~$r/c(r)$ increases on this interval, so it has a limit at~$a$ in~$[0,\infty)$.
Combining this with the jump condition~\eqref{eq:herglotz-jump} at $r=a$ shows that $\lim_{r\to a+}c(r)$ exists in~$(0,\infty)$.
Using the conserved quantities introduced in section~\ref{sec:geodesic} below, we conclude that any geodesic approaching the surface $r=a$ from either direction has a well-defined limit --- for both position and direction.

The above argument actually shows that~$c$ is bounded away from zero and infinity on $(R+\eps,1]$ for any $\eps>0$.
The definition does permit $\lim_{r\to R}c(r)=\infty$.

Total internal reflection from the outside is possible, but we exclude all geodesics that reflect where the wave speed~$c$ jumps.
In practice such geodesics would give valuable data for the inverse problem, but that data are unnecessary for uniqueness and we therefore omit it.
We also exclude geodesics that are tangent to a surface where~$c$ jumps.
Geodesics that meet such a surface are therefore assumed to traverse it according to Snell's law.
Our geodesics do not branch; they are completely transmitted.

This is not merely a matter of technical convenience.
If one wants to take branching into account, the whole concept of X-ray transform needs to be redefined.
This generalization is particularly inobvious for the periodic broken ray transform.

We do not know whether the Herglotz condition and piecewise~$C^{1,1}$ regularity are necessary conditions for our injectivity results.
Without these assumptions the geometric framework starts breaking apart and the problem may need a reformulation.
There are geodesic X-ray tomography results on various manifolds without symmetries (see section~\ref{sec:app}), but our method of proof relies heavily on symmetry.
In general, one may extend injectivity results to nearby geometries by a perturbation argument if one has stability in addition to injectivity.
However, we have little or not restrictions for conjugate points and cannot therefore expect stability.

In addition to the Herglotz condition, our results for the broken ray transforms (periodic and not) require an additional assumption.
This assumption may in fact follow from the Herglotz condition, but we state it separately since we have no proof of the implication.

\begin{definition}
\label{def:ccc}
We say that a~$C^{1,1}$ wave speed~$c$ satisfies the \emph{countable conjugacy condition} 
if there are only countably many maximal geodesics whose endpoints are conjugate, up to rotational symmetry.
\end{definition}

For some negative results (see theorem~\ref{thm:pbrt}) we need a stronger condition.

\begin{definition}
\label{def:fcc}
We say that a~$C^{1,1}$ wave speed~$c$ satisfies the \emph{finite conjugacy condition} 
if there are only finitely many maximal geodesics whose endpoints are conjugate, up to rotational symmetry.
\end{definition}

The countable conjugacy condition holds for most wave speeds, and assuming it is unlikely to be an issue in applied problems.

\subsection{Overview of results and methods}
\label{sec:results}

Our main results concern ray transforms on spherically symmetric manifolds.
See the theorems mentioned here for more details and related results.

\begin{itemize}
\item
Assuming a piecewise~$C^{1,1}$ wave speed satisfying the Herglotz condition, the attenuated geodesic X-ray transform is injective when the attenuation is radially symmetric and Lipschitz continuous.
(See theorem~\ref{thm:xrt}.)
\item
Assuming a~$C^{1,1}$ wave speed satisfying the Herglotz condition and the countable conjugacy condition, the broken ray transform is injective on functions that are quasianalytic in the angular variable(s).
The broken rays have endpoints on the boundary (they are not periodic), and the endpoints can be limited to any open set.
In two dimensions, for example, a function of the form
\begin{equation}
f(r,\theta)
=
\sum_{\abs{k}\leq K}a_k(r)e^{ik\theta}
\end{equation}
satisfies the regularity assumptions if each~$a_k$ is H\"older continuous.
(See theorem~\ref{thm:brt}.)
\item
Assuming a~$C^{1,1}$ wave speed satisfying the Herglotz condition and the countable conjugacy condition, the periodic broken ray determines the even part of the function in three or more dimensions.
In two dimensions averages over circles centered at the origin are determined.
Very little other information can be recovered in any dimension.
(See theorem~\ref{thm:pbrt}.)
\end{itemize}

The tools used in these proofs are similar.
They all rely on analysis disc by disc and Abel transforms.
The piecewise~$C^{1,1}$ metric is dealt with using layer stripping.
Fourier series in the angular variable(s) plays a crucial role.

The periodic broken ray transform is closely related to a new integral transform we introduce, the planar average ray transform.
The kernel of this new transform is precisely the space of odd functions (see theorem~\ref{thm:part}), and the mentioned result for the periodic broken ray transform follows.
The proof of this theorem again relies on Abel transforms, but also on the Funk transform.

In order to prove the mentioned results for ray transforms, we give several results for various other integral transforms in a low regularity setting.
For injectivity of Funk-type transforms on distributions, see theorem~\ref{thm:funk}.

In addition, we prove that a large class of Abel-type integral transforms is injective (see theorem~\ref{thm:inj}).
The proof is based on finding a local approximate inverse and using Neumann series together with layer stripping.
Section~\ref{sec:abel} contains several results regarding continuity, injectivity and differentiability properties of these integral transforms.

\subsection{Applications and related results}
\label{sec:app}

The most important spherically symmetric manifold we have in mind is the Earth.
In the Preliminary Reference Earth Model (PREM) both pressure and shear wave speeds are piecewise nice and satisfy the Herglotz condition in the mantle~\cite{DA:prem}.
The Herglotz condition is violated at the core--mantle boundary (CMB) but is satisfied at all jump discontinuities above it.
Furthermore, the Herglotz condition for the pressure wave speed only fails at the CMB and for shear wave speed only in the liquid outer core.
Geometrically, failure of the Herglotz condition at a jump discontinuity means that rays can be trapped by total internal reflection.

We consider a piecewise~$C^{1,1}$ satisfying the Herglotz condition in an annulus.
In PREM, both wave speeds satisfy these assumptions, both regularity and geometry, in the whole mantle.

The attenuated X-ray transform has a direct application in an imaging method known as SPECT.
It also appears in the anisotropic Calder\'on's inverse boundary value problem~\cite{DKSU:anisotropic,KSU:maxwell}.
For a review of attenuated X-ray tomography we refer to~\cite{F:attenuated-x-ray}.
For other results for attenuated ray transforms on manifolds, see e.g.~\cite{S:attenuated-x-ray,B:hyperbolic,SU:surface,M:attenuated-xrt,MSU:x-ray-cp}.

The standard X-ray transform without attenuation has applications in imaging methods such as CT and PET.
The Euclidean version of this problem is the starting point of the study of inverse problems in integral geometry (see e.g. the classical works~\cite{radon,cormack,book-natterer,book-helgason}), but our main interests are in non-Euclidean geometry.
It appears in the linearizations of some inverse problems, such as the travel time tomography problem where one attempts to recover a manifold from the pairwise distances of boundary points.
Solving the linearized problem leads to results known as boundary rigidity; see~\cite{SU:bdy-rigitity-review} for a review.
See the book~\cite{S:tensor-book} for the geodesic X-ray transform and its applications.

A less studied variant of X-ray tomography is broken ray tomography, where instead of geodesics one considers broken rays which reflect at the boundary of the manifold.
One may either consider broken rays with endpoints in a given subset of the boundary, or periodic broken rays which do not terminate at all.
For recent results in broken ray tomography, we refer to~\cite{I:brt-thesis,I:disk,I:refl,I:bdy-det,I:torus,IS:brt-pde-1obst,H:square,H:brt-flat-refl,eskin}.
``Broken ray tomography'' may also refer to problems where the reflections take place in the interior rather than the boundary of the domain; see~\cite{I:brt-thesis} for a discussion of terminology.

The broken ray transform appears in similar ways as the X-ray transform when reflections from the exterior boundary or inaccessible interior boundaries are present.
For inverse boundary value problems for PDEs with partial data, we refer to~\cite{eskin,KS:calderon}.
Differentiating the length of a broken ray with respect to the metric leads to the broken ray transform~\cite[Theorem~17]{IS:brt-pde-1obst} in analogy to the more familiar situation without reflections.

Seismic travel time tomography for USArray~\cite{usarray2008} (there are numerous subsequent improvements as more data have become available) is based on broken ray tomography.
We will study broken ray tomography with measurements in a small boundary set in section~\ref{sec:brt}.
The prevalent inversion method is to make a least squares fit, but a better understanding of broken ray tomography is likely to lead to advances in seismic tomographic inversion.
For a comparison of linearized tomographic travel time inversion and the non-linear Herglotz--Wiechert inversion method in radial symmetry, and a discussion of Abel transforms in this context, see~\cite{N:tomography-herglotz-wiechert}.
Earlier work on the Abel transform will be discussed in section~\ref{sec:abel}.

The periodic X-ray transform on closed manifolds is related to spectral rigidity problems.
This relation is encoded by the length spectrum, the set of lengths of all periodic geodesics.
For a prominent example, see~\cite{PSU:anosov}.
In a similar fashion, the periodic broken ray transform is related to the rigidity of the spectrum and the length spectrum of a manifold with boundary.
The first results in this direction can be found in~\cite{HIK:spherical-spectral} where the problem is studied on spherically symmetric manifolds.

\subsection{The structure of the paper}

Sections~\ref{sec:abel}--\ref{sec:funk} are dedicated to tools we need for ray transforms.
This includes Abel transforms (section~\ref{sec:abel}), Funk transforms (section~\ref{sec:funk}), and other tools (section~\ref{sec:tools}).
A reader whose sole interest lies in ray transforms may skip these sections.
Some of our auxiliary results are well known, but we present them with proofs for completenees and ease of reading.

We will discuss geodesics and broken rays in radial symmetry in section~\ref{sec:geod}.
With the key tools ready for our disposal, we will focus on ray transforms in sections~\ref{sec:xrt}--\ref{sec:pbrt}.

\section{Abel transforms}
\label{sec:abel}

We start with studying Abel transforms.
We will need to understand these transforms to prove injectivity results for ray transforms in subsequent sections, but we look at more general Abel transforms than is needed for ray tomography.
For a review of the Abel transform, see~\cite{VT:abel-review}, and for more about general Abel-like transforms, see \cite{MT:calderon-abel,GV:abelbook,I:disk,cormack,V:volterra1,V:volterra2,V:volterra-spline,V:volterra-abstract}.
We are not sure of originality of the results in this section but we were unable to find the exact results we need in the literature.
The case of $K\equiv1$ is well known.

The notations of this section are somewhat different from those elsewhere in this paper.

\subsection{Definitions}

Let $\Delta=\{(x,y);0\leq x\leq y\leq 1\}$ and let $K\colon\Delta\to\R$ be any bounded measurable function.
Fix a number $\alpha\in[0,1)$.
For a function $f\colon[0,1]\to\R$ we define its integral transform $I^\alpha_K f\colon[0,1]\to\R$ by
\begin{equation}
I^\alpha_K f(x)
=
\int_x^1(y-x)^{-\alpha}K(x,y)f(y)\der y
\end{equation}
whenever this integral is defined, and we let $I^\alpha_K f(1)=0$.
These transforms generalize the classical Abel transform~$I^{1/2}_1$, and they can be seen as weighted Riesz potentials.
We wish to study the properties of this integral transform for different choices of domain and target spaces.
In particular, we are interested in injectivity and continuity.

An important property of the transform is that~$I^\alpha_K f(x)$ only depends on the values of~$f(y)$ for $y\geq x$.
Without this property an injectivity result like theorem~\ref{thm:inj} would not be possible.
From the point of view of ray transforms, this is related to support theorems.

All function spaces are based on the interval~$[0,1]$ with the usual metric and measure unless otherwise stated.
All spaces we consider are contained in~$L^1$, so any function may be assumed to be integrable.
The exponents~$p$ in~$L^p$ spaces may be anything in~$[1,\infty]$, and we denote the H\"older conjugate by a prime: $p'=1/(1-1/p)$.

The kernel~$K$ is often assumed to be in the space~$\Lip(\Delta)$ of Lipschitz functions from~$\Delta$ to~$\R$, but some results hold for $K\in L^\infty(\Delta)$ as well.
We denote the Lipschitz constant with respect to the first variable by~$\lip_1(K)$ for~$K\in\Lip(\Delta)$.

\subsection{Continuity}

We establish continuity in two senses: the transform is continuous between suitable function spaces, and the image of a continuous function --- and some~$L^p$ functions --- is continuous.

\begin{theorem}
\label{thm:abel-cont}
The transform $I^\alpha_K\colon L^p\to L^q$ is well defined and continuous when $\alpha+1/p<1+1/q$.
In particular, this holds when $p>1/(1-\alpha)$, $q<1/\alpha$, or $p=q$.
The norm of this mapping satisfies $\aabs{I^\alpha_K}_{L^p\to L^q}=\Order(\sup_\Delta\abs{K})$.
\end{theorem}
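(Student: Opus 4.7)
The plan is to dominate the operator pointwise by a one-sided convolution and apply Young's inequality. First I would pull out $\sup_\Delta|K|$ to obtain
\begin{equation*}
|I^\alpha_K f(x)| \leq \sup_\Delta|K| \cdot \int_x^1 (y-x)^{-\alpha}|f(y)|\, \der y = \sup_\Delta|K| \cdot \bigl(|f| * \check g\bigr)(x),
\end{equation*}
where $f$ is extended by zero outside $[0,1]$, $g(t)=t^{-\alpha}\mathbf{1}_{(0,1)}(t)$, and $\check g(s)=g(-s)$. A direct calculation shows that $g\in L^r(\R)$ precisely when $r<1/\alpha$ (with $1/0$ read as $\infty$).

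Then I would invoke Young's convolution inequality: for $1\le p,q,r\le\infty$ satisfying $1/p+1/r=1+1/q$, one has $\||f|*\check g\|_{L^q}\le\|f\|_{L^p}\|g\|_{L^r}$. Given $(p,q)$ with $q\ge p$ and $\alpha+1/p<1+1/q$, define the matching exponent by $1/r=1+1/q-1/p$; the conditions $\alpha<1/r\le 1$ are equivalent to $r\in[1,1/\alpha)$, so $g\in L^r$. This yields
\begin{equation*}
\|I^\alpha_K f\|_{L^q}\le\sup_\Delta|K|\cdot\|g\|_{L^r}\cdot\|f\|_{L^p},
\end{equation*}
which simultaneously gives continuity and the operator norm bound.

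For the complementary range $q<p$, the hypothesis $\alpha+1/p<1+1/q$ is automatic from $\alpha<1$. I would apply the previous step with $q=p$ (taking $r=1$, which is admissible because $\alpha<1$) and then compose with the continuous embedding $L^p([0,1])\hookrightarrow L^q([0,1])$ afforded by H\"older's inequality on a bounded interval.

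The three ``in particular'' statements then follow by substituting $q=\infty$, $p=1$, and $p=q$ respectively into the hypothesis. I foresee no real obstacle; the crux is simply that the bounded support of $g$ keeps $\|g\|_{L^r}$ finite for every $r<1/\alpha$, which is exactly why the strict inequality in the hypothesis suffices and why we avoid the usual Hardy--Littlewood--Sobolev endpoint restrictions.
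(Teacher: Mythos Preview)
Your argument is correct and is genuinely different from the paper's. The paper does not recognize the operator as a convolution; instead it chooses $s=\min(p,q)$, splits $(y-x)^{-\alpha}=(y-x)^{-\beta}(y-x)^{-\gamma}$ with $q\beta<1$ and $s'\gamma<1$, applies H\"older's inequality with exponents $(s,s')$ to the inner integral, and then uses Minkowski's integral inequality (with exponent $q/s\ge1$) to exchange the order of integration. This produces the same bound and handles $q<p$ and $q\ge p$ in a single chain, without your case split.

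Your route is shorter and more conceptual: once the kernel is written as $g(t)=t^{-\alpha}\mathbf{1}_{(0,1)}(t)$, Young's inequality is a one-line black box, and the compact support of $g$ is exactly what converts the strict inequality $\alpha+1/p<1+1/q$ into $g\in L^r$. The only thing lost is uniformity of presentation: you need the separate $L^p\hookrightarrow L^q$ embedding step for $q<p$, whereas the paper's splitting $\alpha=\beta+\gamma$ absorbs both regimes at once. One small quibble: your last paragraph about the ``in particular'' clauses is phrased as substitutions of a single endpoint, but the actual claims are that the hypothesis holds for \emph{all} $q$ (resp.\ all $p$); this follows immediately since $1+1/q\ge1$ (resp.\ $\alpha+1/p\le\alpha+1$), so you may want to say that explicitly.
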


\begin{proof}
We denote by~$\lesssim$ inequalities involving constants independent of~$f$.
If~$K$ vanishes identically, the result is obvious.
Otherwise we divide~$K$ by~$\sup_\Delta\abs{K}$ so that we may assume $\abs{K}\leq1$; the last part of the claim follows from this scaling.

We set $s=\min(p,q)$; this number satisfies $\alpha+1/s<1+1/q$, $q/s\geq1$ and $1\leq s\leq p$.
The inequality $\alpha+1/s<1+1/q$ implies that $\alpha<1/q+1/s'$, and so there are constants $\beta,\gamma\in[0,\alpha]$ so that $s'\gamma<1$, $q\beta<1$, and $\beta+\gamma=\alpha$.

We have
\begin{equation}
\begin{split}
\aabs{I^\alpha_Kf}_{L^q}
&=
\left(
\int_0^1\abs{\int_z^1 f(r)K(z,r)(r-z)^{-\alpha}\der r}^q\der z
\right)^{1/q}
\\&\leq
\left(
\int_0^1\left(\int_z^1 \abs{f(r)}(r-z)^{-\beta}(r-z)^{-\gamma}\der r\right)^q\der z
\right)^{1/q}
\\&\leq
\bigg(
\int_0^1
\left(\int_z^1 \abs{f(r)}^s(r-z)^{-s\beta}\der r\right)^{q/s}
\\&\quad\times
\left(\int_z^1 (r-z)^{-s'\gamma}\der r\right)^{q/s'}
\der z
\bigg)^{1/q}
\\&\lesssim
\left[
\left(
\int_0^1
\left(\int_z^1 \abs{f(r)}^s(r-z)^{-s\beta}\der r\right)^{q/s}
\der z
\right)^{s/q}
\right]^{1/s}
\\&\leq
\left[
\int_0^1
\left(
\int_0^r \abs{f(r)}^q(r-z)^{-q\beta}\der z
\right)^{s/q}
\der r
\right]^{1/s}
\\&\lesssim
\left[
\int_0^1
\abs{f(r)}^s
\der r
\right]^{1/s}
\\&=
\aabs{f}_{L^s}
\\&\leq
\aabs{f}_{L^p}
.
\end{split}
\end{equation}
This is the desired continuity estimate.
\end{proof}

\begin{theorem}
\label{thm:If-cont}
If~$K$ is continuous and $p\in(1,\infty]$ is such that $\alpha p/(p-1)<1$, then $I_K^\alpha\colon L^p\to C$ is continuous.
\end{theorem}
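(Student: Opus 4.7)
The plan is to combine a sup-norm bound on $I_K^\alpha f$ (which gives operator continuity once we know the image sits in $C$) with a direct $\varepsilon$-$\delta$ argument for pointwise continuity of $I_K^\alpha f$ in the $x$-variable. Throughout, write $p'=p/(p-1)$, so the hypothesis is exactly $\alpha p'<1$, which is precisely the condition that $y\mapsto(y-x)^{-\alpha}$ lies in $L^{p'}$ near $y=x$.

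First, H\"older's inequality gives
\begin{equation*}
\abs{I_K^\alpha f(x)}
\leq
\aabs{K}_{\infty}\aabs{f}_{L^p}\left(\int_x^1(y-x)^{-\alpha p'}\der y\right)^{1/p'}
\leq
C\aabs{K}_\infty\aabs{f}_{L^p},
\end{equation*}
with $C=(1-\alpha p')^{-1/p'}$ (take the natural limit if $p=\infty$, using $p'=1$). This is the desired operator bound, so it only remains to check that $I_K^\alpha f$ is actually continuous for each $f\in L^p$; alternatively one can invoke theorem~\ref{thm:abel-cont} with $q=\infty$ for the norm estimate.

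For pointwise continuity, fix $x_0\in[0,1]$ and consider $x=x_0+h$ with $h>0$ small (the case $h<0$ is symmetric). Decompose
\begin{equation*}
I_K^\alpha f(x)-I_K^\alpha f(x_0)=A(h)+B(h)+D(h),
\end{equation*}
where
\begin{align*}
A(h)&=-\int_{x_0}^{x_0+h}(y-x_0)^{-\alpha}K(x_0,y)f(y)\der y,\\
B(h)&=\int_x^1(y-x_0)^{-\alpha}[K(x,y)-K(x_0,y)]f(y)\der y,\\
D(h)&=\int_x^1\bigl[(y-x)^{-\alpha}-(y-x_0)^{-\alpha}\bigr]K(x,y)f(y)\der y.
\end{align*}
For $A(h)$, H\"older gives $\abs{A(h)}\leq\aabs{K}_\infty\aabs{f}_{L^p}\bigl((1-\alpha p')^{-1}h^{1-\alpha p'}\bigr)^{1/p'}\to0$. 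For $B(h)$, since $K$ is continuous on the compact set $\Delta$ it is uniformly continuous, so $\sup_{y\in[x,1]}\abs{K(x,y)-K(x_0,y)}\to0$; H\"older with the uniformly bounded weight $(y-x_0)^{-\alpha}\in L^{p'}$ then handles the term.

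The main obstacle is $D(h)$: here the kernel change itself is singular at $y=x$. After the substitution $u=y-x$ and H\"older,
\begin{equation*}
\abs{D(h)}\leq\aabs{K}_\infty\aabs{f}_{L^p}\left(\int_0^{1-x}\bigl\lvert u^{-\alpha}-(u+h)^{-\alpha}\bigr\rvert^{p'}\der u\right)^{1/p'}.
\end{equation*}
The integrand tends pointwise to $0$ and is dominated by $u^{-\alpha p'}$, which is integrable on $[0,1]$ precisely because $\alpha p'<1$. Dominated convergence gives $D(h)\to0$, completing the proof. The adaptation to $p=\infty$ simply replaces $p'$ by $1$ throughout and uses $\alpha<1$.
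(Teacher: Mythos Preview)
Your proof is correct and takes a genuinely different route from the paper's. The paper first reduces to showing pointwise continuity of $g=I_K^\alpha f$ (invoking theorem~\ref{thm:abel-cont} for the $L^\infty$ bound, as you also note), then handles $x\in[0,1)$ by an affine change of variable mapping $[x',1]$ onto $[x,1]$: this produces three terms, the last of which involves the translated function $f_{x'}(y)=f\bigl(y+\tfrac{(x'-x)(1-y)}{1-x}\bigr)$, and the paper spends a separate $\varepsilon/3$ argument (approximation by continuous functions) proving $f_{x'}\to f$ in $L^p$. Your decomposition $A+B+D$ avoids any change of variable and never moves $f$; instead you absorb the singular-kernel variation entirely into $D(h)$ and dispose of it by the pointwise domination $\lvert u^{-\alpha}-(u+h)^{-\alpha}\rvert^{p'}\leq u^{-\alpha p'}$ together with dominated convergence. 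This is more elementary and arguably cleaner for the present statement: it uses only the integrability of $u^{-\alpha p'}$ and the uniform continuity of $K$, with no auxiliary translation lemma. The paper's change-of-variable approach, on the other hand, keeps the singular weight $(y-x)^{-\alpha}$ fixed and shifts all the $x'$-dependence into the arguments of $K$ and $f$, which is what makes it reusable verbatim in the Lipschitz version (theorem~\ref{thm:If-lip}) and yields explicit H\"older and Lipschitz constants there.
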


\begin{proof}
It suffices to show that for every $f\in L^p$ the function~$I_K^\alpha f$ is continuous.
The space of continuous functions is a closed subspace of~$L^\infty$ and theorem~\ref{thm:abel-cont} gives continuity in the~$L^\infty$ norm.

We denote $g=I_K^\alpha f$.
Let us first prove that~$g$ is continuous at~$1$.
It is a natural interpretation of the definition of the integral transform that $g(1)=0$.
We can then use H\"older's inequality to find
\begin{equation}
\begin{split}
\abs{g(x)}
&\leq
\int_x^1(y-x)^{-\alpha}\abs{K(x,y)}\abs{f(y)}\der y
\\&\leq
\max_\Delta\abs{K}
\left(\int_x^1(y-x)^{-\alpha p'}\der y\right)^{1/p'}
\left(\int_x^1\abs{f(y)}^p\der y\right)^{1/p}
\\&\to
0
\quad\text{as }x\to1,
\end{split}
\end{equation}
which establishes continuity at $x=1$.

Fix any $x\in[0,1)$.
We will show that $\abs{g(x')-g(x)}\to0$ as $x'\to x$.
We can assume that $x,x'\in[0,a]$ for some $a<1$.

By definition we have
\begin{equation}
g(x')
=
\int_{x'}^1(y'-x')^{-\alpha}K(x',y')f(y')\der y'.
\end{equation}
Making the change of variable $y=\frac{1-x}{1-x'}y'+\frac{x-x'}{1-x'}$ in the above integral gives
\begin{equation}
\begin{split}
g(x')
&=
\left(\frac{1-x'}{1-x}\right)^{1-\alpha}\int_x^1(y-x)^{-\alpha}K\left(x',y+\frac{(x'-x)(1-y)}{1-x}\right)
\\&\quad\times
f\left(y+\frac{(x'-x)(1-y)}{1-x}\right)\der y.
\end{split}
\end{equation}
Therefore
\begin{equation}
\label{eq:abel-cont-est}
\begin{split}
&\abs{g(x')-g(x)}
\\&\leq
\abs{1-\left(\frac{1-x'}{1-x}\right)^{1-\alpha}}\times
\int_x^1(y-x)^{-\alpha}
\\&\qquad\times
\abs{K\left(x',y+\frac{(x'-x)(1-y)}{1-x}\right)}
\abs{f\left(y+\frac{(x'-x)(1-y)}{1-x}\right)}\der y
\\&\quad+
\int_x^1(y-x)^{-\alpha}\abs{K\left(x',y+\frac{(x'-x)(1-y)}{1-x}\right)-K(x,y)}
\\&\qquad\times
\abs{f(y+\frac{(x'-x)(1-y)}{1-x})}\der y
\\&\quad+
\int_x^1(y-x)^{-\alpha}\abs{K(x,y)}\abs{f\left(y+\frac{(x'-x)(1-y)}{1-x}\right)-f(y)}\der y.
\end{split}
\end{equation}
We will show that each of the three terms tends to zero as $x'\to x$.

Let us define $f_{x'}\colon [0,1]\to\mathbb R$ by $f_{x'}(y)=f(y+\frac{(x'-x)(1-y)}{1-x})$; notice that $f_x=f$.
We extend~$f$ by zero to a larger interval $J=[-\frac1{1-a},1]$ to make each~$f_{x'}$ well defined and we also extend~$f_{x'}$ to~$J$ by zero where the original formula would take the argument of~$f$ outside~$J$.
Let us show that $f_{x'}\to f$ in~$L^p([0,1])$ as $x'\to x$.

Fix any $\eps>0$.
By density of continuous functions, there is a continuous function $h\in L^p(J)$ so that $\aabs{f-h}_{L^p(J)}<(1-a)\eps/3$.
Let us define~$h_{x'}$ in terms of~$h$ like~$f_{x'}$ was defined in terms of~$f$.
Since these scaled and shifted functions differ from the original ones only by an affine change of variable, we get
\begin{equation}
\aabs{f_{x'}-h_{x'}}_{L^p([0,1])}
\leq
\frac{1-x'}{1-x}\aabs{f-h}_{L^p(J)}
\leq
\frac\eps3.
\end{equation}
The function~$h$ is uniformly continuous, so there is $\delta>0$ so that $\abs{h(y)-h_{x'}(y)}<\eps/3$ for all $y\in[0,1]$ and all such~$x'$ that $\abs{x-x'}<\delta$.
Now if $\abs{x-x'}<\delta$, we have
\begin{equation}
\begin{split}
\aabs{f_{x'}-f}_{L^p([0,1])}
&\leq
\aabs{f_{x'}-h_{x'}}_{L^p([0,1])}+\aabs{h_{x'}-h}_{L^p([0,1])}
\\&\quad
+\aabs{h-f}_{L^p([0,1])}
\\&<
\eps.
\end{split}
\end{equation}
This proves the desired auxiliary result: $f_{x'}\to f$ in $L^p([0,1])$ as $x'\to x$.

This implies that the functions~$f_{x'}$ have a uniform bound on their~$L^p$ norm for~$x'$ sufficiently close to~$x$.
Using H\"older's inequality like above, we find
\begin{equation}
\begin{split}
&\int_x^1(y-x)^{-\alpha}\abs{K\left(x',y+\frac{(x'-x)(1-y)}{1-x}\right)}\abs{f_{x'}(y)}\der y
\\&\leq
\max_\Delta\abs{K}
\left(\int_x^1(y-x)^{-\alpha p'}\der y\right)^{1/p'}
\left(\int_x^1\abs{f_{x'}(y)}^p\der y\right)^{1/p}.
\end{split}
\end{equation}
Since this bound is independent of~$x'$, the first term in~\eqref{eq:abel-cont-est} tends to zero as $x'\to x$.

Essentially the same estimate together with uniform continuity of~$K$ shows that the second term tends to zero as well.
The same estimate works for the third term too, since $f_{x'}\to f$ in~$L^p([0,1])$.
\end{proof}

We will also need a Lipschitz version of the previous theorem.

\begin{theorem}
\label{thm:If-lip}
If~$f$ and~$K$ are continuous and $\alpha\in[0,1)$, then~$I_K^\alpha f$ is continuous.
If $\alpha>0$, and~$f$ and~$K$ are additionally Lipschitz, then~$I_K^\alpha f$ is $(1-\alpha)$-H\"older everywhere and locally Lipschitz on~$[0,1)$.
In particular, the local Lipschitz constant of~$I_K^\alpha f$ at $x\in[0,1)$ is at most
\begin{equation}
\sup_\Delta\abs{K}\lip f+\sup_{[0,1]}\abs{f}\lip_2 K
+\frac{1-\alpha}{1-x}\sup_\Delta\abs{K}\sup_{[0,1]}\abs{f}.
%
\end{equation}
\end{theorem}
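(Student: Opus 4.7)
The continuity half (i.e., $I_K^\alpha f$ continuous when $f,K$ are merely continuous) is immediate from Theorem~\ref{thm:If-cont} applied with $p=\infty$: continuous functions on the compact interval $[0,1]$ lie in $L^\infty$, and the condition $\alpha p/(p-1)<1$ reduces to $\alpha<1$, which holds by assumption. No new argument is required for this half of the theorem.

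For the quantitative Lipschitz estimate I would reuse the affine change of variables $y'=y+\frac{(x'-x)(1-y)}{1-x}$ from the proof of Theorem~\ref{thm:If-cont}. This bijects $[x,1]$ onto $[x',1]$ and puts both $I_K^\alpha f(x)$ and $I_K^\alpha f(x')$ onto the common interval $[x,1]$ in the form
$$I_K^\alpha f(x')=\left(\frac{1-x'}{1-x}\right)^{1-\alpha}\int_x^1(y-x)^{-\alpha}K(x',y')f(y')\,\der y.$$
Subtracting $I_K^\alpha f(x)$ and applying the same three-way telescoping as in~\eqref{eq:abel-cont-est} produces a prefactor piece $[(\tfrac{1-x'}{1-x})^{1-\alpha}-1]\,I_K^\alpha f(x)$, a kernel piece involving $K(x',y')-K(x,y)$, and a function piece involving $f(y')-f(y)$. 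The last two are controlled through the key bound $|y'-y|=\frac{|x'-x|(1-y)}{1-x}\le|x'-x|$, which turns Lipschitz continuity of $f$ and second-variable Lipschitz continuity of $K$ into bounds linear in $|x'-x|$; integrating them against $(y-x)^{-\alpha}$ and using the sup bounds yields the first two displayed terms $\sup|K|\lip f$ and $\sup|f|\lip_2 K$. The prefactor piece is then handled via the infinitesimal expansion $(\tfrac{1-x'}{1-x})^{1-\alpha}-1\sim-\frac{1-\alpha}{1-x}(x'-x)$ multiplied by the trivial pointwise bound for $|I_K^\alpha f(x)|$, producing the remaining $\frac{1-\alpha}{1-x}\sup|K|\sup|f|$ contribution in the limit $x'\to x$.

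The global $(1-\alpha)$-H\"older claim on all of $[0,1]$ I would derive by a dichotomy based on the size of $|x'-x|$ relative to $\min(1-x,1-x')$. When $|x'-x|$ is small compared to the distance to $1$, integrating the local Lipschitz bound along $[x,x']$ gives a Lipschitz estimate, which is in particular H\"older of any exponent. When $|x'-x|$ is comparable to $1-x$ (including $x'=1$), the direct bound $|I_K^\alpha f(t)|\le\sup|K|\sup|f|(1-t)^{1-\alpha}/(1-\alpha)$ applied at $t\in\{x,x'\}$ together with $I_K^\alpha f(1)=0$ and $(1-x)^{1-\alpha}\lesssim|x'-x|^{1-\alpha}$ in that regime yields H\"older control of order $1-\alpha$ at once.

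The main obstacle is to match the stated constants exactly: the three pieces of the decomposition become entangled through the common prefactor $(\tfrac{1-x'}{1-x})^{1-\alpha}$ and through the shifted arguments $(x',y')$, and reading off the third term as $\frac{1-\alpha}{1-x}\sup|K|\sup|f|$ rather than as a less clean expression involving $(1-x)^{-\alpha}$ requires the \emph{infinitesimal} form of the expansion of the prefactor, so that ``local Lipschitz constant at $x$'' is understood as $\limsup_{x'\to x}|I_K^\alpha f(x')-I_K^\alpha f(x)|/|x'-x|$ rather than as a secant slope over a fixed neighborhood.
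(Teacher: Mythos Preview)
Your proposal is correct and follows the paper's approach: the paper's own proof sets up exactly the change-of-variables formula~\eqref{eq:vv3} and then says the second claim ``follows easily'' from it, leaving the three-way decomposition and the matching of constants implicit. Your caveat about reading the local Lipschitz constant as the $\limsup$ of difference quotients (so that the prefactor expansion can be taken infinitesimally) is on point and is indeed what the terse paper proof must intend.
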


The estimate for the local Lipschitz constant implies, in particular, that the Lipschitz constant of the restriction $I_K^\alpha f|_{[0,1-\eps]}$ is at most
\begin{equation}
\sup_\Delta\abs{K}\lip f+\sup_{[0,1]}\abs{f}\lip_2 K
+\frac{1-\alpha}{\eps}\sup_\Delta\abs{K}\sup_{[0,1]}\abs{f}.
\end{equation}

\begin{proof}[Proof of theorem~\ref{thm:If-lip}]
We denote $g=I_K^\alpha f$ and prove the first claim first.
By definition we have
\begin{equation}
g(x')
=
\int_{x'}^1(y'-x')^{-\alpha}K(x',y')f(y')\der y'
\end{equation}
for any $x'\in[0,1)$.
Take any $x\in[0,1)$.
Making the change of variable $y=\frac{1-x}{1-x'}y'+\frac{x-x'}{1-x'}$ in the above integral gives
\begin{equation}
\label{eq:vv3}
\begin{split}
g(x')
&=
\left(\frac{1-x'}{1-x}\right)^{1-\alpha}\int_x^1(y-x)^{-\alpha}
\\&\quad\times
K(x',y+\frac{(x'-x)(1-y)}{1-x})f(y+\frac{(x'-x)(1-y)}{1-x})\der y.
\end{split}
\end{equation}
Since~$f$ and~$K$ are uniformly continuous, it is easy to see that $g(x')\to g(x)$ as $x'\to x$ with a H\"older modulus of continuity.

We have thus shown that~$g$ is continuous on~$[0,1)$, so it remains to show that $g(x)\to 0$ as $x\to 1$.
But this is elementary, as~$f$ and~$K$ are bounded.

The second claim follows easily from expression~\eqref{eq:vv3} for~$g(x')$.
\end{proof}

The second claim of theorem~\ref{thm:If-lip} cannot be improved significantly: if $K\equiv1-\alpha$ and $f\equiv1$, then $I_K^\alpha f(x)=(1-x)^{1-\alpha}$.

\subsection{Injectivity}

In this section we assume that $\alpha\in(0,1)$.

To study injectivity of the Abel transform~$I^\alpha_K$, we define the integral
\begin{equation}
\label{eq:J-def}
J_K^\alpha(x,y)
=
\int_x^y(z-x)^{\alpha-1}(y-z)^{-\alpha}K(z,y)\der z,
\end{equation}
which will turn out to be of great use.

A simple application of Fubini's theorem shows that
\begin{equation}
\label{eq:fubini}
I_1^{1-\alpha} I_K^\alpha=I_{J_K^\alpha}^0.
\end{equation}
We will show injectivity of~$I_K^\alpha$ under some conditions by showing that~$I_{J_K^\alpha}^0$ is injective.

Another important property of the integral~$J_K^\alpha$ is that
\begin{equation}
\label{eq:c-def}
J_1^\alpha(x,y)
=
\frac{\pi}{\sin(\alpha\pi)}
\eqqcolon
c_\alpha
\end{equation}
whenever $x<y$.
This gives the following result; see~\cite{cormack} for an early use of such identities in ray tomography.

\begin{proposition}
\label{prop:K1-inj}
The mapping $I_1^\alpha\colon L^p\to L^q$ is injective for any $\alpha\in(0,1)$ whenever~$p$ and~$q$ are such that it is well defined.
We have the inversion formula
\begin{equation}
\label{eq:K1-inv}
f(x)=-c_\alpha^{-1}\Der{x}I_1^{1-\alpha} I_1^\alpha f(x)
\end{equation}
which holds almost everywhere.
The formula holds pointwise for continuous~$f$.
\end{proposition}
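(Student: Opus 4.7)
The plan is to exploit the identity~\eqref{eq:fubini} with $K\equiv1$ together with the constant value~\eqref{eq:c-def} of $J_1^\alpha$, which reduces the composition $I_1^{1-\alpha}I_1^\alpha$ to (a multiple of) the trivial Volterra averaging operator. Inversion then amounts to one application of the Lebesgue differentiation theorem.

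More concretely, I would first verify that the composition $I_1^{1-\alpha}I_1^\alpha$ makes sense on the given $L^p$: theorem~\ref{thm:abel-cont} shows that $I_1^\alpha$ maps $L^p$ continuously into $L^q$, and since $(1-\alpha)+1/q<1+1/r$ is satisfied for some $r\in[1,\infty]$, the second transform $I_1^{1-\alpha}$ is well defined on the image. With the functional setup in place, Fubini's theorem gives~\eqref{eq:fubini}, and substituting $K\equiv1$ together with~\eqref{eq:c-def} yields
\begin{equation}
I_1^{1-\alpha}I_1^\alpha f(x)=I_{J_1^\alpha}^0 f(x)=c_\alpha\int_x^1 f(y)\der y.
\end{equation}

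Next I would differentiate. Since $f\in L^1$, the map $x\mapsto\int_x^1 f(y)\der y$ is absolutely continuous with derivative $-f(x)$ at every Lebesgue point of~$f$, hence almost everywhere. This gives the inversion formula~\eqref{eq:K1-inv} a.e.; in particular, if $I_1^\alpha f=0$, then $f=0$ a.e., which is the injectivity statement. If $f$ is continuous, the fundamental theorem of calculus upgrades the almost-everywhere identity to a pointwise one on $[0,1)$, and continuity at $x=1$ is handled by $\int_x^1 f=0$ there.

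The only subtlety — hardly an obstacle — is the bookkeeping needed to justify Fubini at the exponents in play: one has to check that
\begin{equation}
\int_x^1\int_z^1(z-x)^{\alpha-1}(y-z)^{-\alpha}\abs{f(y)}\der y\der z<\infty
\end{equation}
for a.e.~$x$, which follows from the already proven $L^p$--$L^q$ continuity of $I_1^\alpha$ in theorem~\ref{thm:abel-cont} applied to $\abs{f}$, combined with the explicit beta-function computation that produces the constant $c_\alpha$. Once these routine steps are in place, both conclusions of the proposition fall out immediately.
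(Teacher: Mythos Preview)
Your proposal is correct and follows essentially the same approach as the paper: combine~\eqref{eq:fubini} with~\eqref{eq:c-def} to obtain $I_1^{1-\alpha}I_1^\alpha f(x)=c_\alpha\int_x^1 f(y)\der y$, then differentiate via the Lebesgue differentiation theorem. The paper's proof is in fact terser than yours, omitting the bookkeeping you spell out for the well-definedness of the composition and the Fubini step.
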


\begin{proof}
For $f\in L^1$ we have by~\eqref{eq:fubini} and~\eqref{eq:c-def}
\begin{equation}
I_1^{1-\alpha} I_1^\alpha f(x)
=
c_\alpha \int_x^1 f(y)\der y,
\end{equation}
from which the result follows.
The required version of the fundamental theorem of calculus for~$L^1$ functions follows from Lebesgue's differentiation theorem.
\end{proof}

The general transform~$I_K^\alpha$ can be inverted in a similar manner if one only finds such a kernel $L\colon\Delta\to\R$ that $I_L^{1-\alpha} I_K^\alpha=I_1^0$.
When $\alpha=1/2$ and $K(x,y)=2yT_k(x/y)/\sqrt{x+y}$, the choice $L(x,y)=T_k(y/x)x/(\pi y\sqrt{x+y})$ gives this result; the corresponding integral transform~$I_K^{1/2}$ is closely related to the Radon transform in the plane~\cite{cormack,I:disk}.
Here~$T_k$ is the $k$th Chebyshev polynomial.
The apparent singularity at zero is not an issue for injectivity; theorem~\ref{thm:inj} works so that any neighborhood of zero can be easily omitted.

For general~$K$ and~$\alpha$ it is difficult to find a suitable~$L$, so we approach the problem in a different way.
The downside of the method below is that it does not yield an explicit inversion formula like~\eqref{eq:K1-inv}.
The result of proposition~\ref{prop:K1-inj} can be slightly generalized as the next proposition demonstrates.

\begin{proposition}
\label{prop:fact-inj}
Let $K\colon\Delta\to\R$ be a continuous nowhere vanishing function so that $K(x,y)=a(x)b(y)$ for some functions~$a$ and~$b$.
Then~$I_K^\alpha$ is injective and
\begin{equation}
\label{eq:fact-inv}
f(x)=-b(x)^{-1}c_\alpha^{-1}\Der{x}I_L^{1-\alpha} I_K^\alpha f(x),
\end{equation}
where $L(x,y)=1/a(y)$.
\end{proposition}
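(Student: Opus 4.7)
The plan is to reduce the statement to Proposition~\ref{prop:K1-inj} by peeling off the two multiplicative factors. First I would observe that, because $a(x)$ depends only on $x$, it pulls out of the integral defining $I_K^\alpha$:
\begin{equation*}
I_K^\alpha f(x) = a(x)\int_x^1 (y-x)^{-\alpha} b(y) f(y)\,\der y = a(x)\,I_1^\alpha(bf)(x).
\end{equation*}
The choice $L(x,y)=1/a(y)$ is then designed precisely to cancel this $a(y)$ when we apply $I_L^{1-\alpha}$ to the function $y\mapsto I_K^\alpha f(y)$:
\begin{equation*}
I_L^{1-\alpha}(I_K^\alpha f)(x) = \int_x^1 (y-x)^{\alpha-1}\frac{1}{a(y)}\,a(y)\,I_1^\alpha(bf)(y)\,\der y = I_1^{1-\alpha} I_1^\alpha(bf)(x).
\end{equation*}

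At this point Proposition~\ref{prop:K1-inj}, whose proof uses the Fubini identity~\eqref{eq:fubini} together with the constant computation~\eqref{eq:c-def}, gives
\begin{equation*}
I_1^{1-\alpha}I_1^\alpha(bf)(x)=c_\alpha\int_x^1 b(y)f(y)\,\der y.
\end{equation*}
Differentiating in $x$ by Lebesgue's differentiation theorem (valid almost everywhere because $bf\in L^1$, and pointwise wherever $bf$ is continuous) produces
\begin{equation*}
\Der{x}I_L^{1-\alpha}I_K^\alpha f(x) = -c_\alpha\,b(x)\,f(x).
\end{equation*}
Dividing by $-c_\alpha b(x)$, which is legitimate since $b$ is continuous and nowhere vanishing on $[0,1]$ and hence bounded away from zero, gives the inversion formula~\eqref{eq:fact-inv}, from which injectivity of $I_K^\alpha$ is immediate.

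The only point to verify with care is that the various compositions are well defined on the relevant function spaces. Because $K=ab$ is continuous and nowhere vanishing on the compact set $\Delta$, the factors $a$ and $b$ are individually continuous and bounded away from zero and infinity (after an overall normalization choice). Therefore $bf$ lies in the same $L^p$ class as $f$, the kernel $L(x,y)=1/a(y)$ is bounded, and Theorem~\ref{thm:abel-cont} guarantees that the iterated transforms $I_1^\alpha(bf)$ and $I_L^{1-\alpha}(I_K^\alpha f)$ land in suitable Lebesgue spaces, so Proposition~\ref{prop:K1-inj} applies to $bf$ verbatim. There is no substantial obstacle here: the proposition is essentially a change-of-variables identity sitting on top of the $K\equiv 1$ inversion formula, and the bulk of the work is the bookkeeping of the cancellation $a(y)\cdot(1/a(y))=1$.
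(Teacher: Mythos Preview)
Your proof is correct and follows exactly the approach of the paper, which condenses the whole argument into the single line ``a simple calculation shows that $I_L^{1-\alpha}I_K^\alpha f=c_\alpha I_1^0(bf)$, from which the result follows.'' You have spelled out that calculation in detail---the factorization $I_K^\alpha f=a\cdot I_1^\alpha(bf)$, the cancellation of $a$ under $I_L^{1-\alpha}$, and the appeal to Proposition~\ref{prop:K1-inj}---and added the justification that $a$ and $b$ are individually continuous and nonvanishing, which the paper leaves implicit.
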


\begin{proof}
A simple calculation shows that $I_L^{1-\alpha}I_K^\alpha f=c_\alpha I_1^0 (bf)$, from which the result follows.
\end{proof}

Let~$A$ be the space of absolutely continuous real functions on~$[0,1]$ which vanish at~$1$.
It is a classical result (see e.g.~\cite[Chapter 3, Section 3.2]{SS:real-analysis}) that $I_1^0(L^1)=A$, so that we may equip~$A$ with the norm that makes $I_1^0\colon L^1\to A$ an isometry.

For $\eta\in[0,1)$ we will use the subscript~$\eta$ in a function space to indicate restriction to~$[\eta,1]$ instead of~$[0,1]$.
Thus we write $L^p_\eta=L^p([\eta,1])$ and~$A_\eta$ for absolutely continuous functions on~$[\eta,1]$ vanishing at~$1$.
We write $\Delta_\eta=\{(x,y);\eta\leq x\leq y\leq 1\}$.

We define $\phi^\alpha\colon L^\infty(\Delta)\to\lin(L^1,A)$ by letting $\phi^\alpha(K)=I^{\alpha-1}_1I^\alpha_K$.
Lemma~\ref{lma:phi} below shows that indeed $\phi^\alpha(K)(L^1)\subset A$, so the mapping~$\phi^\alpha$ is well defined.

\begin{lemma}
\label{lma:J}
For $K\in\Lip(\Delta)$ also $J^\alpha_K\in\Lip(\Delta)$.
In particular, we have $\sup_\Delta\abs{J^\alpha_K}\leq c_\alpha\sup_\Delta\abs{K}$ and $\lip_1(J^\alpha_K)\leq c_\alpha\lip_1(K)$.
\end{lemma}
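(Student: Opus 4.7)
The plan is to normalize the two singularities by a change of variables that reduces $J_K^\alpha$ to an integral with a fixed beta-type weight, after which all three claims fall out cleanly.

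First I would substitute $z = x + (y-x)t$ in the definition \eqref{eq:J-def} of $J_K^\alpha$. The factors $(z-x)^{\alpha-1} = (y-x)^{\alpha-1}t^{\alpha-1}$, $(y-z)^{-\alpha} = (y-x)^{-\alpha}(1-t)^{-\alpha}$ and $\der z = (y-x)\der t$ multiply to cancel the explicit dependence on $y-x$, giving
\begin{equation}
J_K^\alpha(x,y)
=
\int_0^1 t^{\alpha-1}(1-t)^{-\alpha}\, K\bigl(x+(y-x)t,\, y\bigr)\,\der t.
\end{equation}
This is the workhorse identity: $J_K^\alpha$ is a weighted average of $K$ along the segment from $(x,y)$ to $(y,y)$ against the fixed integrable weight $t^{\alpha-1}(1-t)^{-\alpha}$, whose integral on $[0,1]$ equals the Euler beta value $B(\alpha,1-\alpha) = c_\alpha$ (cf.~\eqref{eq:c-def}).

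The supremum bound then follows at once by pulling $\sup_\Delta|K|$ outside the integral. For the bound on $\lip_1(J_K^\alpha)$, fix $y$ and take $x_1,x_2\in[0,y]$. In the normalized form the shift in the first argument of $K$ is $(x_1-x_2)(1-t)$, so by Lipschitz continuity of $K$ in its first variable
\begin{equation}
\abs{J_K^\alpha(x_1,y)-J_K^\alpha(x_2,y)}
\leq
\lip_1(K)\,\abs{x_1-x_2}
\int_0^1 t^{\alpha-1}(1-t)^{1-\alpha}\,\der t.
\end{equation}
The remaining integral equals $B(\alpha,2-\alpha) = (1-\alpha)c_\alpha \leq c_\alpha$, which is the desired estimate.

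For global Lipschitz continuity on $\Delta$ it remains to control variations in $y$. For fixed $x$ and $y_1,y_2\in[x,1]$ the same formula exhibits the difference as the integral of $K(x+(y_1-x)t, y_1) - K(x+(y_2-x)t, y_2)$ against the same weight; both argument shifts are bounded by $|y_1-y_2|$, so by the Lipschitz norm of $K$ on $\Delta$ the integrand is dominated by $2\Lip(K)|y_1-y_2|\, t^{\alpha-1}(1-t)^{-\alpha}$, giving a Lipschitz-in-$y$ constant at most $2c_\alpha\Lip(K)$. Together with the previous step this yields $J_K^\alpha\in\Lip(\Delta)$. The only mildly subtle point is verifying that the substitution is legitimate up to the boundary $x=y$ of $\Delta$, but since the weight $t^{\alpha-1}(1-t)^{-\alpha}$ is integrable and $K$ is bounded and continuous, $J_K^\alpha$ extends continuously (in fact Lipschitz) to the diagonal, with the claimed constants; no genuine obstacle arises.
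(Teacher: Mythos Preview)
Your proof is correct and follows essentially the same idea as the paper's: a linear change of variable that removes the dependence on the interval length and reduces everything to the fixed beta weight $t^{\alpha-1}(1-t)^{-\alpha}$. Your normalized formula $J_K^\alpha(x,y)=\int_0^1 t^{\alpha-1}(1-t)^{-\alpha}K(x+(y-x)t,y)\,\der t$ is a cleaner packaging of the paper's substitution, and it even yields the slightly sharper constant $(1-\alpha)c_\alpha$ for $\lip_1(J_K^\alpha)$ (the paper throws away the factor $(1-t)$ via $\frac{y-z}{y-x}\leq1$); you also supply the Lipschitz-in-$y$ estimate that the paper leaves implicit.
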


\begin{proof}
It follows immediately from~\eqref{eq:J-def} and~\eqref{eq:c-def} that
\begin{equation}
\sup_\Delta\abs{J^\alpha_K}
\leq
c_\alpha\sup_\Delta\abs{K}.
\end{equation}
The change of variable $z=\frac{y-x}{y-x'}z'+y\frac{x-x'}{y-x'}$ gives
\begin{equation}
\begin{split}
&
J^\alpha_K(x,y)-J^\alpha_K(x',y)
\\&=
\int_x^y(z-x)^{\alpha-1}(y-z)^{-\alpha}
\\&\quad\times
\left[K(z,y)-K\left(\frac{y-x'}{y-x}z+\frac{x-x'}{x-y}y,y\right)\right]\der z.
\end{split}
\end{equation}
Thus
\begin{equation}
\begin{split}
&
\abs{J^\alpha_K(x,y)-J^\alpha_K(x',y)}
\\&\leq
\lip_1(K)\int_x^y(z-x)^{\alpha-1}(y-z)^{-\alpha}\frac{\abs{x-x'}(y-z)}{y-x}\der z
\\&\leq
\lip_1(K)\abs{x-x'}\int_x^y(z-x)^{\alpha-1}(y-z)^{-\alpha}\der z
\\&=
\lip_1(K)\abs{x-x'}c_\alpha
\end{split}
\end{equation}
and so $\lip_1(J^\alpha_K)\leq c_\alpha\lip_1(K)$.
\end{proof}

\begin{lemma}
\label{lma:phi}
The mapping $\phi^\alpha(K)\colon L^1([0,1])\to A([0,1])$ is well defined and continuous for any $\alpha\in(0,1)$ and $K\in L^\infty(\Delta)$.
For $K\in\Lip(\Delta)$, there is $\eta\in[0,1)$ depending only on~$K$ and~$\alpha$ so that $\phi^\alpha_\eta(K)\colon L^1_\eta\to A_\eta$ satisfies $\aabs{\phi^\alpha_\eta(K)}\leq 2c_\alpha\sup_{\Delta_\eta}\abs{K}$.
This estimate remains true if~$\eta$ is increased.
We also have the estimate $\aabs{\phi^\alpha_\eta}\geq c_\alpha$ for any~$\eta$.
\end{lemma}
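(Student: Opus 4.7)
The plan is to exploit the Fubini identity~\eqref{eq:fubini}, which rewrites $\phi^\alpha(K)$ as the integral operator with kernel~$J^\alpha_K$, and then to handle the three assertions of the lemma in turn.

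\emph{Well-definedness and continuity for $K\in L^\infty$.} By~\eqref{eq:fubini}, $\phi^\alpha(K)f(x)=\int_x^1 J^\alpha_K(x,z)f(z)\der z$, and Lemma~\ref{lma:J} provides the pointwise bound $\abs{J^\alpha_K}\leq c_\alpha\sup_\Delta\abs{K}$ which uses only the $L^\infty$ norm of~$K$.  This gives $\abs{\phi^\alpha(K)f(x)}\leq c_\alpha\sup\abs{K}\aabs{f}_{L^1}$ and bounded\-ness as a map into~$L^\infty$.  To establish membership in~$A$ I would approximate $K$ by mollifications $K_n$ with $\aabs{K_n}_\infty\leq\aabs{K}_\infty$ and $K_n\to K$ pointwise a.e.  For each~$K_n$ the Lipschitz analysis below puts $\phi^\alpha(K_n)f\in A$; dominated convergence in the representation $J^\alpha_{K_n}(x,z)=\int_0^1 t^{\alpha-1}(1-t)^{-\alpha}K_n(x+t(z-x),z)\der t$ transfers the~$A$-membership to the limit.

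\emph{Sharp norm bound for $K\in\Lip(\Delta)$.}  Using~\eqref{eq:c-def}, split the kernel as
\[
J^\alpha_K(x,z)=c_\alpha K(z,z)+R(x,z),\qquad R(x,z)\coloneqq J^\alpha_{K-K(z,z)}(x,z).
\]
The diagonal summand gives $c_\alpha\int_x^1 K(z,z)f(z)\der z=c_\alpha I_1^0(K(\cdot,\cdot)f)(x)$, whose $A_\eta$-norm is at most $c_\alpha\sup_{\Delta_\eta}\abs{K}\aabs{f}_{L^1_\eta}$ because $I_1^0$ is an isometry. The remainder satisfies $R(z,z)=0$, and Lemma~\ref{lma:J} applied to the kernel $(y,z)\mapsto K(y,z)-K(z,z)$ (whose Lipschitz constant in the first variable is bounded by $\lip_1(K)$) yields $\abs{\partial_xR(x,z)}\leq c_\alpha\lip_1(K)$. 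Differentiating $\int_x^1 R(x,z)f(z)\der z$ gives $\int_x^1\partial_xR(x,z)f(z)\der z$, and an application of Fubini on $[\eta,1]$ bounds its $L^1_\eta$ norm by $c_\alpha\lip_1(K)(1-\eta)\aabs{f}_{L^1_\eta}$.  Combining,
\[
\aabs{\phi^\alpha_\eta(K)}_{L^1_\eta\to A_\eta}\leq c_\alpha\sup_{\Delta_\eta}\abs{K}+c_\alpha\lip_1(K)(1-\eta).
\]
I then choose $\eta$ close enough to $1$ (depending on $K$ and $\alpha$) so that $\lip_1(K)(1-\eta)\leq\sup_{\Delta_\eta}\abs{K}$; this gives $\aabs{\phi^\alpha_\eta(K)}\leq 2c_\alpha\sup_{\Delta_\eta}\abs{K}$.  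The monotonicity of both $\sup_{\Delta_\eta}\abs{K}$ and $(1-\eta)\lip_1(K\!\mid_{\Delta_\eta})$ in~$\eta$ shows that the inequality survives when $\eta$ is increased.

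\emph{The lower bound $\aabs{\phi^\alpha_\eta}\geq c_\alpha$.}  Test the operator at $K\equiv 1$ on~$\Delta_\eta$; Proposition~\ref{prop:K1-inj} applied on $[\eta,1]$ yields $\phi^\alpha_\eta(1)=c_\alpha I_1^0$, which is $c_\alpha$ times the defining isometry $L^1_\eta\to A_\eta$.  Since $\sup_{\Delta_\eta}\abs{1}=1$, this gives $\aabs{\phi^\alpha_\eta}\geq c_\alpha$.

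The main obstacle will be locating a usable~$\eta$ in the Lipschitz step: when $K(1,1)\neq 0$ one takes $\eta$ near~$1$ and the diagonal term dominates, but if $K$ vanishes at the corner $(1,1)$ one must carefully balance the decay of $\sup_{\Delta_\eta}\abs{K}$ against $\lip_1(K\!\mid_{\Delta_\eta})(1-\eta)$, possibly using a sharper off-diagonal estimate for $\partial_x R$.  A secondary difficulty is the approximation argument for $K\in L^\infty\setminus\Lip$, since such $K$ cannot be approximated in operator norm from the Lipschitz side, forcing the appeal to pointwise convergence and dominated convergence rather than a Cauchy-sequence argument.
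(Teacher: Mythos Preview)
Your Lipschitz-case argument is essentially the paper's, cosmetically reorganized. Both compute the derivative of $g(x)=\int_x^1 J^\alpha_K(x,z)f(z)\,\der z$ and bound
\[
\abs{g'(x)}\leq \abs{J^\alpha_K(x,x)}\,\abs{f(x)}+\lip_1(J^\alpha_K)\int_x^1\abs{f(z)}\,\der z.
\]
You name the two pieces ``diagonal'' and ``remainder'' after observing $J^\alpha_K(x,x)=c_\alpha K(x,x)$; the paper does not split but bounds the boundary term by $M=\sup_\Delta\abs{J^\alpha_K}$ directly and takes $\eta=\max(0,1-M/L)$ with $L=\lip_1(J^\alpha_K)$. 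This sidesteps the corner problem you flag: the existence of $\eta$ is automatic whenever $M>0$, and when $M=0$ the operator vanishes. Your lower-bound argument via $K\equiv1$ is identical to the paper's.

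Your approximation scheme for $K\in L^\infty\setminus\Lip$ has a genuine gap. Dominated convergence in the $J^\alpha_{K_n}$ representation yields $\phi^\alpha(K_n)f\to\phi^\alpha(K)f$ pointwise (even uniformly, via the bound $\abs{J^\alpha_{K_n}}\leq c_\alpha\aabs{K}_\infty$), but $A$ is \emph{not} closed under uniform limits, so absolute continuity does not transfer. What you would need is convergence of the derivatives in $L^1$, and mollifying a merely bounded $K$ gives no uniform control on $\lip_1(J^\alpha_{K_n})$; the $A$-norms of $\phi^\alpha(K_n)f$ may blow up. You correctly sense this is delicate---the paper's own absolute-continuity estimate~\eqref{eq:ac-est} also invokes $\lip_1(J^\alpha_K)<\infty$, so its proof is really written for the Lipschitz case---but your proposed fix does not close the gap. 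Fortunately only the Lipschitz case is used in lemma~\ref{lma:neumann} and beyond.
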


\begin{proof}
We have $\phi^\alpha(K)=I_{J_K^\alpha}^0$ by equation~\eqref{eq:fubini}.
Let $0\leq x<y\leq1$ and $f\in L^1$.
For short, we write $g=\phi^\alpha(K)f$.
We have
\begin{equation}
\begin{split}
\abs{g(1-\eps)}
&=
\abs{\int_{1-\eps}^1J_K^\alpha(1-\eps,z)f(z)\der z}
\\&\leq
\sup_\Delta \abs{J_K^\alpha} \int_{1-\eps}^1\abs{f(z)}\der z
\\&\to
0\quad\text{as }\eps\to0
\end{split}
\end{equation}
by absolute continuity of the Lebesgue integral.
Once we establish absolute continuity, this shows that $g(1)=0$.

For absolute continuity we have the estimate
\begin{equation}
\label{eq:ac-est}
\begin{split}
\abs{g(x)-g(y)}
&\leq
\sup_\Delta\abs{J_K^\alpha}\int_x^y\abs{f(z)}\der z
\\&\quad
+\lip_1(J_K^\alpha)(y-x)\aabs{f|_{[y,1]}}_{L^1}.
\end{split}
\end{equation}
Let $\eps>0$; we wish to find $\delta>0$ so that
\begin{equation}
\sum_{i\in\N}\abs{g(a_i)-g(b_i)}\leq\eps
\end{equation}
whenever the intervals~$(a_i,b_i)$ are disjoint and their union has measure at most~$\delta$.
But this follows now immediately from the estimate~\eqref{eq:ac-est}, integrability of~$f$ and absolute continuity of the Lebesgue integral.
We have now proven that $g=\phi^\alpha(K)f\in A$.

Since $g\in A$, its derivative~$g'$ exists almost everywhere and is in~$L^1$.
We denote $M=\sup_\Delta\abs{J_K^\alpha}$ and $L=\lip_1(J_K^\alpha)$.
It follows from lemma~\ref{lma:J} that $M\leq c_\alpha\sup_\Delta\abs{K}$ and $L<\infty$.
Estimate~\eqref{eq:ac-est} shows that
\begin{equation}
\abs{g'(x)}
\leq
M\abs{f(x)}
+
L\int_x^1\abs{f(z)}\der z
\end{equation}
for almost all~$x$.

Thus for any $\eta\in[0,1)$
\begin{equation}
\begin{split}
\aabs{g}_{A([\eta,1])}
&=
\aabs{g'}_{L^1([\eta,1])}
\\&\leq
[M+L(1-\eta)]\aabs{f}_{L^1([\eta,1])}.
\end{split}
\end{equation}
This shows that $\aabs{\phi^\alpha_\eta(K)}\leq M+L(1-\eta)$.
If we choose $\eta=\max(0,1-M/L)$, we have $\aabs{\phi^\alpha_\eta(K)}\leq2M$ as claimed.
Since all variables in~$\Delta$ take values in~$\Delta_\eta$ only, we may replace supremum over~$\Delta$ with supremum over~$\Delta_\eta$.

For a lower bound on the operator norm we use the constant kernel $K\equiv1$.
We have
\begin{equation}
\aabs{\phi^\alpha_\eta(1)f}_{A_\eta}
=
c_\alpha\aabs{I_1^0f}_{A_\eta}
=
c_\alpha\aabs{f}_{L^1_\eta},
\end{equation}
so that $\aabs{\phi^\alpha_\eta(1)}\geq c_\alpha$ and thus $\aabs{\phi^\alpha_\eta}\geq c_\alpha$.
\end{proof}


\begin{lemma}
\label{lma:neumann}
Let $\alpha\in(0,1)$.
Let $K\in\Lip(\Delta)$ be nonvanishing at the diagonal.
There is $\eta\in[0,1)$ depending only on~$\alpha$, $\inf_x\abs{K(x,x)}$, $\sup_\Delta\abs{K}$, and~$\lip_1(K)$ so that $I^\alpha_K\colon L^1_\eta\to L^1_\eta$ is injective.
The result remains true if~$\eta$ is increased.
\end{lemma}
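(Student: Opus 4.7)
The plan is to reduce injectivity of $I^\alpha_K$ to a Volterra-type fixed point problem and solve it by showing the associated integral operator is a strict contraction on $L^1_\eta$ when $\eta$ is close enough to $1$. By the factorization~\eqref{eq:fubini} and the injectivity of $I_1^{1-\alpha}$ (Proposition~\ref{prop:K1-inj}), the map $I^\alpha_K$ is injective on $L^1_\eta$ if and only if $\phi^\alpha(K) = I^0_{J_K^\alpha}$ is; so I would begin by assuming $f \in L^1_\eta$ satisfies $\int_x^1 J^\alpha_K(x,y) f(y) \der y = 0$ for every $x \in [\eta, 1]$ and aim to show $f = 0$.

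Before differentiating I would extract two facts about $J^\alpha_K$. The substitution $z = x + t(y-x)$ in~\eqref{eq:J-def} gives the representation $J^\alpha_K(x,y) = \int_0^1 t^{\alpha-1}(1-t)^{-\alpha} K(x + t(y-x), y) \der t$, from which $J^\alpha_K$ extends continuously to the closed diagonal with $J^\alpha_K(x,x) = c_\alpha K(x,x)$. Setting $c := c_\alpha \inf_x |K(x,x)|$ and $L := c_\alpha \lip_1(K)$, continuity of $K$ on the compact diagonal together with the nonvanishing hypothesis yields $|J^\alpha_K(x,x)| \geq c > 0$, while Lemma~\ref{lma:J} gives $|\partial_1 J^\alpha_K(x,y)| \leq L$ for almost every $x$.

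Next I would differentiate the vanishing integral. Writing $J^\alpha_K(x,y) = J^\alpha_K(y,y) - \int_x^y \partial_1 J^\alpha_K(t,y) \der t$ and applying Fubini expresses $g(x) = \int_x^1 J^\alpha_K(x,y) f(y) \der y$ as
\begin{equation*}
g(x) = \int_x^1 J^\alpha_K(y,y) f(y) \der y - \int_x^1 \int_t^1 \partial_1 J^\alpha_K(t,y) f(y) \der y \der t,
\end{equation*}
which is absolutely continuous in $x$ with almost-everywhere derivative
\begin{equation*}
g'(x) = -J^\alpha_K(x,x) f(x) + \int_x^1 \partial_1 J^\alpha_K(x,y) f(y) \der y.
\end{equation*}
Since $g \equiv 0$ this yields the fixed-point identity $f = Tf$ almost everywhere, where
\begin{equation*}
Tf(x) = \frac{1}{J^\alpha_K(x,x)} \int_x^1 \partial_1 J^\alpha_K(x,y) f(y) \der y.
\end{equation*}

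Finally, by Fubini,
\begin{equation*}
\aabs{Tf}_{L^1_\eta} \leq \frac{L}{c} \int_\eta^1 \int_x^1 |f(y)| \der y \der x = \frac{L}{c} \int_\eta^1 (y - \eta) |f(y)| \der y \leq \frac{L(1-\eta)}{c} \aabs{f}_{L^1_\eta},
\end{equation*}
so $T$ is a strict contraction on $L^1_\eta$ whenever $\eta > 1 - c/L = 1 - \inf_x |K(x,x)|/\lip_1(K)$. Iterating $f = T^n f$ then forces $f = 0$ in $L^1_\eta$, and since enlarging $\eta$ only decreases the operator norm the conclusion is stable under increasing $\eta$. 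The main obstacle I anticipate is justifying the differentiation step: with $f$ only in $L^1$ and $J^\alpha_K$ merely Lipschitz in its first variable, a naive Leibniz rule does not apply, so the Fubini-based rewriting above (rather than the a.e.\ estimate from the proof of Lemma~\ref{lma:phi}) is essential for obtaining the exact formula for $g'$ that one needs to close the fixed point identity.
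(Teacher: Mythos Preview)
Your proof is correct and follows the same overall strategy as the paper: reduce injectivity of $I^\alpha_K$ to that of $\phi^\alpha(K)=I^0_{J^\alpha_K}$ via~\eqref{eq:fubini}, differentiate, and run a Neumann/contraction argument on $L^1_\eta$ for $\eta$ close to~$1$.

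The difference is in the splitting. The paper writes $K=C+(K-C)$ with the \emph{constant} $C\equiv K(1,1)$, so that $D\phi^\alpha(K)=E-F$ with $E=-Cc_\alpha\,\id$, and then invokes the operator-norm bound of Lemma~\ref{lma:phi} to control $\aabs{F}=\aabs{\phi^\alpha_\eta(C-K)}$. You instead compute $g'$ explicitly via the Fubini rewriting, which peels off the \emph{variable} diagonal term $-J^\alpha_K(x,x)f(x)=-c_\alpha K(x,x)f(x)$ and leaves the Volterra remainder $T$ that you estimate directly. Your route is slightly more hands-on and bypasses the abstract estimate of Lemma~\ref{lma:phi}; it also yields the explicit (and mildly sharper) threshold $\eta>1-\inf_x\abs{K(x,x)}/\lip_1(K)$, in which $c_\alpha$ cancels and neither $\alpha$ nor $\sup_\Delta\abs{K}$ appears. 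The paper's route is more modular, reusing machinery already set up. Both are valid; the Fubini justification you flag as the main obstacle is indeed the right point to be careful about, and your treatment of it (writing $J^\alpha_K(x,y)=J^\alpha_K(y,y)-\int_x^y\partial_1 J^\alpha_K(t,y)\,\der t$ and integrating) is clean. One small remark: when $\lip_1(K)=0$ the threshold formula is vacuous, but then $T\equiv0$ and any $\eta$ works.
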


\begin{proof}
Since~$K$ cannot change sign at the diagonal, we may assume $m\coloneqq\inf_x K(x,x)>0$.
Let us denote by~$C$ the constant function $C\equiv K(1,1)\geq m$ on~$\Delta$.
By lemma~\ref{lma:phi} we can choose~$\eta$ so that $\aabs{\phi^\alpha_\eta(C-K)}\leq 2\sup_{\Delta_\eta}\abs{C-K}$.

Furthermore we may take~$\eta$ to be so large that $\sup_{\Delta_\eta}\abs{C-K}<Cc_\alpha/2$, so that $\aabs{\phi^\alpha_\eta(C-K)}<Cc_\alpha$.
This choice of~$\eta$ indeed only depends on~$\alpha$, $m$, $\sup_\Delta\abs{K}$, and~$\lip_1(K)$.

It suffices to show that $D\phi^\alpha_\eta(K)\colon L^1_\eta\to L^1_\eta$ is bijective, where~$D$ is the derivative operator.
We then define the operators $E=DI^{\alpha-1}_1I^\alpha_C$ and $F=DI^{\alpha-1}_1I^\alpha_{C-K}$, so that $D\phi^\alpha_\eta(K)=E-F$.
We do not include the subscript~$\eta$ in the operators~$E$ and~$F$, although they map~$L^1_\eta$ to itself.
Since $E=-Cc_\alpha\id$, we have $\aabs{E^{-1}}=(Cc_\alpha)^{-1}$.
Thus
\begin{equation}
\aabs{F}
=
\aabs{\phi^\alpha(C-K)}
<
Cc_\alpha
=
\aabs{E^{-1}}^{-1}.
\end{equation}
Since~$\lin(L^1,L^1)$ is a Banach algebra, we may use the Neumann series to invert $D\phi^\alpha(K)=E-F$:
\begin{equation}
\phi^\alpha(K)^{-1}
=
E^{-1}\sum_{n=0}^\infty (FE^{-1})^n.
\end{equation}
This concludes the proof.
\end{proof}

\begin{theorem}
\label{thm:inj}
Let $\alpha\in[0,1)$.
Suppose $K\colon\Delta\to\R$ is bounded everywhere, nonzero on the diagonal $\{(x,x);0\leq x\leq 1\}$, and Lipschitz continuous in some neighborhood of the diagonal.
If $f\in L^1([0,1])$ satisfies $I^\alpha_K f(x)=0$ for almost all $x\geq r$ for some $r\in[0,1)$, then $f(x)=0$ for almost all $x\geq r$.
In particular, $I^\alpha_K\colon L^1\to L^1$ is injective.
\end{theorem}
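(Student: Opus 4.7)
My plan is to combine Lemma \ref{lma:neumann} with a layer-stripping procedure, reaching Lemma \ref{lma:neumann}'s global Lipschitz hypothesis through an affine rescaling that turns a thin diagonal strip $[b-\rho,b]\subseteq[0,1]$ into all of $[0,1]$. The layers are stripped starting from $x=1$ and working leftwards toward $r$.

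First I fix constants. The Lipschitz neighborhood of the compact diagonal contains a strip $N=\{(x,y)\in\Delta:y-x\le\delta_0\}$ on which $K$ is $L$-Lipschitz; set $M=\sup_\Delta|K|$ and $m=\inf_x|K(x,x)|>0$ (the sign of $K(x,x)$ is constant on $[0,1]$ by continuity and connectedness). For $0\le b-\rho<b\le 1$ with $\rho\le\delta_0$, introduce the rescaled kernel $\tilde K(\tilde x,\tilde y)=K(b-\rho+\rho\tilde x,\,b-\rho+\rho\tilde y)$ on $\Delta$. Because $[b-\rho,b]^2\cap\Delta\subseteq N$, I obtain $\tilde K\in\Lip(\Delta)$ with $\sup|\tilde K|\le M$, $\inf_{\tilde x}|\tilde K(\tilde x,\tilde x)|\ge m$, and $\lip_1(\tilde K)\le\rho L\le\delta_0 L$. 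Lemma \ref{lma:neumann} therefore produces some $\tilde\eta<1$ (uniform in $b,\rho$) such that $I^\alpha_{\tilde K}\colon L^1_{\tilde\eta}\to L^1_{\tilde\eta}$ is injective. The substitution $y=b-\rho+\rho\tilde y$ gives the identity $I^\alpha_K f(b-\rho+\rho\tilde x)=\rho^{1-\alpha}I^\alpha_{\tilde K}\tilde f(\tilde x)$ whenever $f$ vanishes on $[b,1]$, where $\tilde f(\tilde y)=f(b-\rho+\rho\tilde y)$.

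Now consider $S=\{b\in[r,1]:f=0\text{ a.e.\ on }[b,1]\}$. Clearly $1\in S$, and $S$ is closed: for $b_n\in S$ with $b_n\to b$, $f=0$ a.e.\ on $\bigcup_n[b_n,1]\supseteq(b,1]$, which means $f=0$ a.e.\ on $[b,1]$ since $\{b\}$ is null. The central step is: whenever $b\in S$ with $b>r$, some $b'<b$ lies in $S$. To prove this, set $\rho=\min(\delta_0,b-r)>0$; since $f=0$ on $[b,1]$ and $I^\alpha_K f=0$ on $[r,1]\supseteq[b-\rho,b]$, the rescaling identity forces $I^\alpha_{\tilde K}\tilde f=0$ on $[0,1]$. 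Restricting to $[\tilde\eta,1]$ and applying Lemma \ref{lma:neumann} yields $\tilde f=0$ on $[\tilde\eta,1]$, hence $f=0$ on $[b-\rho(1-\tilde\eta),1]$, so $b'\coloneqq b-\rho(1-\tilde\eta)\in S$ and $b'<b$ because $\tilde\eta<1$. Taking $b_*=\inf S$, closedness places $b_*\in S$; if $b_*>r$ then the previous paragraph produces a strictly smaller element of $S$, contradicting minimality. Hence $b_*=r$ and $r\in S$, which is what the theorem asserts.

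The main obstacle is to make sure the constants that Lemma \ref{lma:neumann} depends on remain uniformly bounded after the rescaling, which is precisely why $\rho$ must be capped at $\delta_0$: otherwise $\lip_1(\tilde K)$ could blow up and the Neumann-series argument behind Lemma \ref{lma:neumann} would not close. A secondary point is that the case $\alpha=0$ is formally outside Lemma \ref{lma:neumann}, whose proof goes through $\phi^\alpha=I^{\alpha-1}_1 I^\alpha_K$; however, $I^0_K$ is then a Volterra integral operator of the second kind, and its injectivity on small intervals follows from a standard Neumann-series argument once $K(x,x)\neq 0$, which slots into the same layer-stripping scheme without change.
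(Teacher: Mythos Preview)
Your proof is correct and follows essentially the same layer-stripping strategy as the paper, invoking Lemma~\ref{lma:neumann} on successive thin strips near the diagonal. The only difference is cosmetic: the paper first extends $K$ to a globally Lipschitz kernel and then iterates by translation, whereas you affinely rescale each strip $[b-\rho,b]$ to $[0,1]$ (which makes the rescaled kernel globally Lipschitz without any extension) and replace the finite induction by an infimum/closedness argument---arguably a tidier bookkeeping of the same idea.
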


\begin{remark}
One can write an inversion formula for theorem~\ref{thm:inj} in terms of Neumann series and iteration as may be read in the proof.
\end{remark}

\begin{remark}
The restriction result of theorem~\ref{thm:inj} (if~$I^\alpha_K f$ vanishes above~$r$, so does~$f$) is related to support theorems for ray transforms.
Indeed, this result combined with the analysis of~\cite{cormack} proves Helgason's famous support theorem.
Helgason~\cite[Theorem~4.2]{book-helgason} gave a different proof.
More generally, this result gives a support theorem for ray transforms on spherically symmetric manifolds; see section~\ref{sec:xrt}.
\end{remark}

\begin{proof}[Proof of theorem~\ref{thm:inj}]
We will only prove the theorem for $\alpha>0$.
If $\alpha=0$, we may consider~$I_K^0$ directly instead of studying~$I_{J^0_K}^0$.
Apart from removing this one step the proof is unchanged.

The kernel~$K$ is strictly positive and Lipschitz continuous in $\{(x,y)\in\Delta;y-x\geq\zeta\}$ for some $\zeta\in(0,1]$.
We will only use the values of~$K$ in this strip near the diagonal, so we may replace~$K$ with a Lipschitz extension elsewhere without altering the result and assume that~$K$ is Lipschitz in~$\Delta$.

Let $\eta\in[0,1)$ be the constant of lemma~\ref{lma:neumann} related to~$K$ and~$\alpha$.
We denote $\delta=1-\eta$; by possibly slightly decreasing~$\delta$, we may assume that $(1-r)/\delta\eqqcolon n$ is an integer.
Lemma~\ref{lma:neumann} remains true if~$\delta$ is decreased and~$\eta$ thus increased.

The function $f_\eta=f|_{[\eta,1]}\in L^1_\eta$ satisfies $\phi^\alpha_\eta(K)f_\eta=0$, and by lemma~\ref{lma:neumann} this implies that $f_\eta=0$.
Therefore~$f$ vanishes almost everywhere in $[1-\delta,1]$.
If $n=1$ ($\eta=r$), we are done, so we assume that $n\geq2$.

We then define $g\in L^1([0,1])$ by letting $g(x)=0$ for $x<\delta$ and $g(x)=f(x-\delta)$ for $x\geq\delta$.
We define the kernel $L\colon\Delta\to\R$ by $L(x,y)=K(x-\delta,y-\delta)$ when $x\geq\delta$ and extend~$L$ to the rest of~$\Delta$.
(We never use the extended values of~$L$.)
This extension can be done so that the maximum and Lipschitz constant of~$L$ are at most those of~$K$ and the infimum on the diagonal is not decreased.
Since~$f$ vanishes on $[1-\delta,1]$, a simple change of variable gives
\begin{equation}
I_L^\alpha g(x)=I_K^\alpha f(x-\delta)
\end{equation}
for all $x\geq r+\delta$.
If additionally $x\geq 1-2\delta$, lemma~\ref{lma:neumann} may again be used to see that~$g$ vanishes on~$[\eta,1]$.
Note that the step size~$\delta$ can be kept constant since~$\eta$ in lemma~\ref{lma:neumann} only depends on the bounds and the Lipschitz constant of~$K$ which are not changed.
Thus we have shown that~$f$ vanishes on $[1-2\delta,1]$.

We can carry on inductively~$n$ times (recall that $1-n\delta=r$), and finally conclude that~$f$ vanishes on~$[r,1]$.
\end{proof}

\subsection{Differentiability}

So far in this section we have studied Abel transforms with very low regularity.
In this subsection we will study how Abel transforms preserve differentiability.

In particular, we study functions of the form
\begin{equation}
\label{eq:abel-diff}
f(x)
=
\int_x^1(y^2-x^2)^{-\alpha}\phi(x,y)\der y,
\end{equation}
where~$\phi$ is a regular function.
The question is how much regularity~$f$ inherits from~$\phi$, and an answer is given by the following proposition.
The proposition will not be used in this paper directly, but it is employed in the follow-up work~\cite{HIK:spherical-spectral} and is used in the proof of proposition~\ref{prop:ccc-dense}.

\begin{proposition}
\label{prop:abel-diff}
Suppose $\phi\colon\Delta\to\R$ is continuous and~$k$ times continuously differentiable in the interior of~$\Delta$.
Provided that $\alpha\in(0,1)$, the function~$f$ defined by~\eqref{eq:abel-diff} is continuous in~$[0,1]$ and~$k$ times continuously differentiable in~$(0,1)$.
Furthermore,
\begin{equation}
\label{eq:abel-derivative}
\begin{split}
f'(x)
&=
\int_x^1(y^2-x^2)^{-\alpha}\left[\partial_x\phi(x,y)+\partial_y\left(\frac xy\phi(x,y)\right)\right]\der y
\\&\quad
-x(1-x^2)^{-\alpha}\phi(x,1)
.
\end{split}
\end{equation}
\end{proposition}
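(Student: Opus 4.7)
The key idea is to absorb the singular factor $(y^2-x^2)^{-\alpha}$ into a standard Abel-type kernel on a fixed interval via the substitution $s = y^2 - x^2$, $\der s = 2y\,\der y$. Setting $\psi(x,s) = \phi(x,\sqrt{x^2+s})/(2\sqrt{x^2+s})$, the integral~\eqref{eq:abel-diff} becomes
\begin{equation}
f(x) = \int_0^{1-x^2} s^{-\alpha}\psi(x,s)\,\der s.
\end{equation}
For $x \in (0,1)$ the quantity $\sqrt{x^2+s}$ is bounded away from zero on $s \in [0, 1-x^2]$, so $\psi$ inherits $C^k$ regularity in $x$ from $\phi$, with all $x$-derivatives uniformly bounded on compact subsets of $(0,1) \times [0,1]$. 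Since $s^{-\alpha}$ is an integrable majorant, the Leibniz rule applies iteratively and yields $f \in C^k((0,1))$.

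For continuity on $[0,1]$, I would factorize $(y^2-x^2)^{-\alpha}\phi(x,y) = (y-x)^{-\alpha}K(x,y)$ with $K(x,y) = (y+x)^{-\alpha}\phi(x,y)$. On any strip $\{x \geq \eta\}$, $\eta>0$, this kernel $K$ is bounded and continuous, so Theorem~\ref{thm:If-cont} gives continuity of $f$ on $[\eta,1]$; continuity at $x=1$ is elementary from the boundedness of $\phi$, and continuity at the remaining point $x=0$ follows from a direct dominated-convergence argument (equivalently, from the reformulated integral above).

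To produce the explicit first-derivative formula, I differentiate the reformulated integral once, accounting for the moving endpoint $s = 1-x^2$, which gives
\begin{equation}
f'(x) = \int_0^{1-x^2} s^{-\alpha}\partial_x\psi(x,s)\,\der s - 2x(1-x^2)^{-\alpha}\psi(x,1-x^2).
\end{equation}
Since $\psi(x, 1-x^2) = \phi(x,1)/2$, the boundary term simplifies to $-x(1-x^2)^{-\alpha}\phi(x,1)$. Computing $\partial_x\psi$ by the chain rule (using $\partial_x\sqrt{x^2+s} = x/\sqrt{x^2+s}$) and reverting to the $y$ variable yields
\begin{equation}
\int_x^1 (y^2-x^2)^{-\alpha}\Bigl[\partial_x\phi(x,y) + \tfrac{x}{y}\partial_y\phi(x,y) - \tfrac{x}{y^2}\phi(x,y)\Bigr]\,\der y,
\end{equation}
and the bracket equals $\partial_x\phi + \partial_y(\tfrac{x}{y}\phi)$ by the product rule, matching~\eqref{eq:abel-derivative}.

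The main obstacle is the routine but error-prone chain-rule bookkeeping, plus justifying differentiation under the integral at each of the $k$ steps; the substituted form is chosen precisely so that the naive attempt to differentiate $(y^2-x^2)^{-\alpha}$ directly, which would manufacture a non-integrable factor $(y^2-x^2)^{-\alpha-1}$, is avoided. A secondary subtlety is that at each differentiation the boundary contribution involves $\phi(x,1)$ and its $x$-derivatives, whose existence on $(0,1)$ is implicit in the $C^k$ hypothesis in the interior once combined with continuity up to the edge $y=1$.
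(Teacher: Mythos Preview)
Your approach is correct and arrives at the same formula, but it is packaged differently from the paper's proof. Both arguments rest on the same key observation: the substitution that renders the singular factor independent of~$x$. The paper implements this by writing $f(\tilde x)$ via the change of variable $\tilde y^2-\tilde x^2=y^2-x^2$ (so that both $f(x)$ and $f(\tilde x)$ become integrals over the same interval $[x,1]$), then expands the integrand to first order in $\tilde x-x$ and passes to the limit in the difference quotient by hand. You instead make the substitution $s=y^2-x^2$ once and for all and invoke the Leibniz rule with an integrable majorant $s^{-\alpha}$. Your route is slicker and iterates to order~$k$ without further work, whereas the paper's explicit difference-quotient computation has the minor advantage of not appealing to any differentiation-under-the-integral theorem. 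The boundary-regularity caveat you flag (derivatives of~$\phi$ up to the diagonal and up to $y=1$) is genuine but afflicts the paper's argument equally; it is an imprecision in the statement rather than in either proof.
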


\begin{proof}
Continuity follows from theorem~\ref{thm:If-lip}, so we only prove differentiability.

Fix $x\in(0,1)$ and let $\tilde x\in(0,1)$.
We will eventually let $\tilde x\to x$ and show that the difference quotient has the correct limit.
Let $k=1$ first.

Adding tildes to~\eqref{eq:abel-diff}, we have
\begin{equation}
f(\tilde x)
=
\int_{\tilde x}^1(\tilde y^2-\tilde x^2)^{-\alpha}\phi(\tilde x,\tilde y)\der\tilde y.
\end{equation}
We change the integration variable from~$\tilde y$ to~$y$ so that $y^2-x^2=\tilde y^2-\tilde x^2$.
We write~$\tilde y$ instead of the radical $\sqrt{y^2+\tilde x^2-x^2}$ for the sake of brevity and legibility.
We find
\begin{equation}
\label{eq:abel-diff-cov}
f(\tilde x)
=
\int\limits_x^{\mathclap{\sqrt{1+x^2-\tilde x^2}}}(y^2-x^2)^{-\alpha}\phi(\tilde x,\tilde y)\frac{y}{\tilde y}\der y.
\end{equation}
Simple calculations give the following approximations:
\begin{equation}
\begin{split}
\tilde y-y
&=
\frac{\tilde x+x}{2y}(\tilde x-x)
+\Order((\tilde x-x)^2)
,
\\
\sqrt{1+x^2-\tilde x^2}
&=
1-\frac12(\tilde x+x)(\tilde x-x)
+\Order((\tilde x-x)^2)
,
\\
\frac{y}{\tilde y}
&=
1-\frac{(\tilde x+x)(\tilde x-x)}{2y^2}
+\Order((\tilde x-x)^2)
,\text{ and}
\\
\phi(\tilde x,\tilde y)
&=
\phi(x,y)+\partial_x\phi(x,y)(\tilde x-x)
\\&\quad+\partial_y\phi(x,y)\frac{\tilde x+x}{2y}(\tilde x-x)
+\order((\tilde x-x)^2).
\end{split}
\end{equation}
Using these, we obtain
\begin{equation}
\begin{split}
\frac{f(\tilde x)-f(x)}{\tilde x-x}
&=
\int_x^1(y^2-x^2)^{-\alpha}
\\&\qquad\times
\left[\partial_x\phi(x,y)+\frac xy\partial_y\phi(x,y)-\frac x{y^2}\phi(x,y)\right]\der y
\\&\quad
-x(1-x^2)^{-\alpha}\phi(x,1)
+
\order(\tilde x-x)
.
\end{split}
\end{equation}
This gives~\eqref{eq:abel-derivative}.

The general case $k>1$ follows easily by induction on~$k$, using~\eqref{eq:abel-derivative}.
The integrands in~\eqref{eq:abel-diff} and~\eqref{eq:abel-derivative} have the same form.
\end{proof}

\begin{remark}
A far simpler calculation verifies that~\eqref{eq:abel-derivative} is also true in the trivial case $\alpha=0$.
\end{remark}

\section{Fourier series and discs in balls}
\label{sec:tools}

This section is devoted to auxiliary results that facilitate connecting the Abel transforms to ray transforms.

\subsection{Measurability and integrability on submanifolds}

The lemma below is given in more generality than needed, but it causes no added effort for the proof.

Let~$G_2^n$ denote the Grassmannian of two-dimensional subspaces of~$\R^n$.
We give the next lemma only for the Euclidean metric, but this causes no loss of generality in the subsequent proofs.
For another version of this statement, see~\cite{S:xrt}.

\begin{lemma}
\label{lma:lp-restriction}
Fix an integer $n\geq3$, a radius $R\in(0,1)$, and an exponent $p\in[1,\infty)$.
Let $A=\bar B(0,1)\setminus\bar B(0,R)\subset\mathbb R^n$.
If $f\in L^p(A)$, then
\begin{equation}
\label{eq:lp-identity}
\int_A\abs{f}^p
=
c\int_{G_2^n}\left(\int_{A\cap P}\abs{x}^{n-2}\abs{f(x)}^p\der\h^2(x)\right)\der P,
\end{equation}
where $c>0$ is a constant and~$\h^2$ is the Hausdorff measure of dimension two.
In particular, $f|_{A\cap P}\in L^p(A\cap P)$ for almost every two dimensional plane $P\in G_2^n$.
\end{lemma}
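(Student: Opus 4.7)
The plan is to prove \eqref{eq:lp-identity} by parameterizing the Grassmannian $G_2^n$ through the Stiefel manifold $V_2^n$ of orthonormal $2$-frames and reducing to polar coordinates on each $2$-plane. The restriction assertion for almost every $P$ will then fall out of finiteness of the right-hand side, since $|x|^{n-2}\geq R^{n-2}>0$ on $A$.

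First I would treat the case where $f$ is nonnegative and continuous; the general $L^p$ statement follows by monotone convergence applied to truncations/mollifications of $|f|^p$. Write $G_2^n=V_2^n/O(2)$ with
\begin{equation*}
V_2^n=\{(e_1,e_2)\in S^{n-1}\times S^{n-1}:e_1\perp e_2\},
\end{equation*}
equipped with its $O(n)$-invariant measure, which disintegrates as $de_1\,de_2$ where $e_2$ ranges over $e_1^\perp\cap S^{n-1}$ with total mass $|S^{n-2}|$. For $P=\mathrm{span}(e_1,e_2)$, use polar coordinates on $P$ by writing $x=s(\cos\theta\,e_1+\sin\theta\,e_2)$ with $s\in[R,1]$, $\theta\in[0,2\pi)$, so that $d\h^2(x)=s\,ds\,d\theta$.

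Substituting these coordinates into the right-hand side of \eqref{eq:lp-identity} (and absorbing the $O(2)$-normalization into a constant $c_0$) turns it into
\begin{equation*}
c_0\int_{V_2^n}\int_R^1\int_0^{2\pi}s^{n-1}\abs{f(s(\cos\theta\,e_1+\sin\theta\,e_2))}^p\,d\theta\,ds\,d(e_1,e_2).
\end{equation*}
The key observation is that for each fixed $\theta$ the map
\begin{equation*}
(e_1,e_2)\mapsto(\cos\theta\,e_1+\sin\theta\,e_2,\,-\sin\theta\,e_1+\cos\theta\,e_2)
\end{equation*}
is an isometry of $V_2^n$ coming from the $O(2)$-action on frames, hence preserves $d(e_1,e_2)$. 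Applying this change of variable inside the $\theta$-integral, the argument of $f$ becomes simply $se_1$, so the display collapses to $2\pi\int_{V_2^n}\int_R^1 s^{n-1}\abs{f(se_1)}^p\,ds\,d(e_1,e_2)$. Using the fibration $V_2^n\to S^{n-1}$ over $e_1$ with constant fiber volume $|S^{n-2}|$, this is $2\pi|S^{n-2}|\int_{S^{n-1}}\int_R^1 s^{n-1}\abs{f(se_1)}^p\,ds\,de_1=2\pi|S^{n-2}|\int_A\abs{f}^p$, giving \eqref{eq:lp-identity} with an explicit $c$. Since the right-hand side is finite whenever $f\in L^p(A)$, Tonelli's theorem forces the inner integral $\int_{A\cap P}|x|^{n-2}|f|^p\,d\h^2$ to be finite for almost every $P$; using $|x|^{n-2}\geq R^{n-2}$ on $A$ this immediately yields $f|_{A\cap P}\in L^p(A\cap P)$ for a.e.\ $P$.

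The only real obstacle is bookkeeping: fixing a normalization of $dP$ and carefully verifying that the $\theta$-rotation above is measure-preserving on $V_2^n$. Both points are consequences of the uniqueness (up to scale) of the $O(n)$-invariant measure on the Stiefel manifold together with the fact that the frame rotation is induced by an element of $O(2)$ acting on the right, so no deeper analytic input is required beyond Tonelli's theorem.
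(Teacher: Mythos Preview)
Your argument is correct and complete; the rotation trick on the Stiefel manifold is a clean way to collapse the integral, and the final step using $|x|^{n-2}\geq R^{n-2}$ is exactly what is needed for the restriction statement. One small remark: the opening sentence about first treating continuous $f$ and then approximating is unnecessary, since once you replace $f$ by $|f|^p$ everything is nonnegative and measurable and Tonelli applies directly (as you in fact invoke at the end).

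Your route differs from the paper's. The paper introduces the bundle $E=\{(x,v)\in A\times S^{n-1}:v\cdot x=0\}$ with projection $\pi\colon E\to A$, notes that $\int_E\pi^*f=|S^{n-2}|\int_A f$ because the fibers have constant volume, and then applies the smooth coarea formula to the natural map $F\colon E\to G_2^n$ sending $(x,v)$ to the plane they span. Rather than computing the normal Jacobian of $F$, the paper argues by rotation symmetry that the resulting weight on each slice $P\cap A$ must be radial, and then identifies it as $|x|^{n-2}$ by a scaling argument with radially symmetric test functions. Your approach, by contrast, parametrizes $G_2^n$ explicitly through $V_2^n$ and uses the right $O(2)$-action to straighten the polar angle, which avoids the coarea formula entirely and produces the weight $s^{n-2}$ (and an explicit constant) directly. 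Your argument is more elementary and constructive; the paper's is more geometric and would generalize more readily to other homogeneous fibrations where explicit frame coordinates are less convenient.
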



\begin{proof}
It suffices to show
\begin{equation}
\label{eq:lp-id2}
\int_A f
=
c\int_{G_2^n}\left(\int_{A\cap P}\abs{x}^{n-2}f(x)\der\h^2(x)\right)\der P
\end{equation}
for all $f\in L^1(A)$.
The identity~\eqref{eq:lp-identity} then follows after replacing~$f$ with $\abs{f}^p$.

Let us consider the set
\begin{equation}
E
=
\{(x,v)\in A\times S^{n-1};v\cdot x=0\}
\end{equation}
and the projection $\pi\colon E\to A$ to the first coordinate.
This is a smooth bundle over~$A$ and~$A$ can be viewed as a Riemannian manifold with boundary.
The manifold~$E$ inherits a natural Riemannian metric from the tangent bundle of~$A$.

Since~$E$ has locally a product structure and each fiber~$\pi^{-1}(x)$ has the same measure, we have $\pi^*f\in L^1(E)$ and
\begin{equation}
\label{eq:EA-int}
\int_E \pi^*f
=
\abs{S^{n-2}}\int_Af.
\end{equation}
It will be easier to convert the integral over~$E$ to an integral involving the Grassmannian.

Consider the mapping $F\colon E\to G^n_2$ defined so that~$F(x,v)$ is the unique plane containing both~$x$ and~$v$.
This is a smooth surjection with an everywhere surjective differential, so the smooth coarea formula gives
\begin{equation}
\label{eq:coarea}
\int_E \pi^*f
=
\int_{P\in G^n_2}\int_{F^{-1}(P)}\pi^*f(z)\frac1{NJF(z)}\der\sigma_P(z)\der P,
\end{equation}
where~$\sigma_P$ is the measure on the level set~$F^{-1}(P)$ and~$NJF$ is the normal Jacobian of~$F$.

We could calculate~$NJF(z)$ explicitly, but we can deduce its form without such calculations.
We have $\pi(F^{-1}(P))=P\cap A$ for every $P\in G^n_2$, and due to rotation symmetry we must have
\begin{equation}
\label{eq:AP-int}
\int_{F^{-1}(P)}\pi^*f(z)\frac1{NJF(z)}\der\sigma_P(z)
=
\int_{P\cap A} f(x)\phi(\abs{x})\der\h^2(x)
\end{equation}
for every plane $P\in G^n_2$, where~$\phi$ is some weight function.
Combining equations~\eqref{eq:EA-int}, \eqref{eq:coarea} and~\eqref{eq:AP-int}, we find
\begin{equation}
\abs{S^{n-2}}\int_Af
=
\int_{P\in G^n_2}
\int_{P\cap A} f(x)\phi(\abs{x})\der\h^2(x)
\der P.
\end{equation}
If we let~$f$ be a spherically symmetric function supported near radius~$r$ and observe how both sides of our identity scale as~$r$ varies, we find that $\phi(r)=r^{n-2}$ up to a multiplicative constant.
This proves identity~\eqref{eq:lp-id2}.
\end{proof}

\begin{remark}
Lemma~\ref{lma:lp-restriction} is otherwise valid if we use the ball instead of the annulus (corresponding to $R=0$), but the last statement is false.
That is, equation~\eqref{eq:lp-identity} is valid but the restrictions of an~$L^p$ function need not be in~$L^p$.
The reason is that functions can concentrate near the origin but this is counteracted by the weight $\abs{x}^{n-2}$ in~\eqref{eq:lp-identity}.
For example, the function $f(x)=\abs{x}^{-n+1/2}$ is in $L^1(B(0,1))$ but its restriction to any proper subspace is not integrable.
\end{remark}

\begin{remark}
\label{rmk:sn-lp}
It is not hard to modify the proof given above to show that a function in~$L^p(S^{n-1})$ restricts to an~$L^p$ function on almost every great circle in the sense of the natural measure on the Grassmannian~$G^n_2$.
\end{remark}

\subsection{Angular Fourier series}
\label{sec:fourier}

In this subsection we show that an~$L^2$ function on a spherically symmetric surface can be written as a convergent Fourier series.
It suffices to prove the next lemma in Euclidean geometry.

\begin{lemma}
\label{lma:fourier}
Let $A=\bar B(0,1)\setminus\bar B(0,R)\subset\R^2$ be an annulus.
Any~$L^2$ function $f\colon A\to\C$ can be represented by the series
\begin{equation}
\label{eq:fourier}
f(r,\theta)=\sum_{k\in\Z}a_k(r)e^{ik\theta},
\end{equation}
where $a_k\colon[R,1]\to\C$ are~$L^2$ functions.
This series converges to~$f$ in~$L^2(A)$.

The~$L^2$ norm of~$f$ can be written as
\begin{equation}
\label{eq:fourier-norm}
\aabs{f}_{L^2}^2
=
2\pi\sum_k\int_R^1 \abs{a_k(r)}^2 r\der r.
\end{equation}
\end{lemma}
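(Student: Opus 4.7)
The plan is to work in polar coordinates and exploit the product structure of the annulus. Writing $A$ in polar coordinates identifies $L^2(A)$ isometrically with $L^2([R,1]\times[0,2\pi], r\,dr\,d\theta)$. This Hilbert space is naturally a tensor product $L^2([R,1], r\,dr)\otimes L^2([0,2\pi], d\theta)$, and the normalized exponentials $\{(2\pi)^{-1/2}e^{ik\theta}\}_{k\in\Z}$ form an orthonormal basis of the second factor, so expansion in this basis is available coefficient by coefficient.

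Concretely, I would define
\begin{equation}
a_k(r)=\frac{1}{2\pi}\int_0^{2\pi} f(r,\theta)e^{-ik\theta}\,\der\theta,
\end{equation}
where the integral makes sense for almost every $r$ because Fubini, applied to $\abs{f}^2 r$, guarantees that $f(r,\cdot)\in L^2([0,2\pi])$ for a.e.\ $r\in[R,1]$. Parseval's identity on the circle then gives, for a.e.\ $r$,
\begin{equation}
\int_0^{2\pi}\abs{f(r,\theta)}^2\der\theta = 2\pi\sum_{k\in\Z}\abs{a_k(r)}^2.
\end{equation}
Multiplying by $r$ and integrating in $r\in[R,1]$, then interchanging the sum and the integral by Tonelli's theorem, yields the norm identity~\eqref{eq:fourier-norm}. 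In particular this also shows each $a_k$ lies in $L^2([R,1], r\,dr)$, and since $r\in[R,1]$ with $R>0$ this is the same space as $L^2([R,1])$ with Lebesgue measure.

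For the $L^2(A)$ convergence of the series~\eqref{eq:fourier}, I would consider the partial sums $f_N(r,\theta)=\sum_{\abs{k}\leq N}a_k(r)e^{ik\theta}$. Applying the norm identity already established to $f-f_N$ (whose $k$th Fourier coefficient is $a_k$ for $\abs{k}>N$ and zero otherwise) gives
\begin{equation}
\aabs{f-f_N}_{L^2(A)}^2 = 2\pi\sum_{\abs{k}>N}\int_R^1\abs{a_k(r)}^2 r\,\der r,
\end{equation}
and this tail tends to zero as $N\to\infty$ by dominated convergence applied to the convergent series $\sum_k\int_R^1\abs{a_k(r)}^2 r\,\der r<\infty$.

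There is no real obstacle here; the only point demanding care is the justification of Fubini/Tonelli to pass from the pointwise-in-$r$ Parseval identity to the global norm identity, and similarly to guarantee that $a_k(r)$ is well defined as a pointwise integral for a.e.\ $r$. Both are routine once one observes that $\abs{f}^2 r$ is integrable on the product $[R,1]\times[0,2\pi]$.
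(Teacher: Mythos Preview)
Your proof is correct and follows essentially the same approach as the paper: define $a_k(r)$ via Fubini, apply Parseval on circles for a.e.\ $r$, and use monotone/dominated convergence to pass to the integrated identity. The only cosmetic difference is ordering---you establish the norm identity~\eqref{eq:fourier-norm} first and read off $L^2$-convergence from the tail, whereas the paper proves convergence first (via dominated convergence on the circle-wise error $g_m(r)=\|f(r,\cdot)-f_m(r,\cdot)\|_{L^2(S^1)}^2$) and then derives the norm identity; the ingredients are identical.
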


\begin{proof}
%
We define
\begin{equation}
a_k(r)
=
\frac{1}{2\pi}\int_0^{2\pi}f(r,\theta)e^{-ik\theta}\der\theta.
\end{equation}
Since $f\in L^2(A)$, each~$a_k(r)$ is well defined for almost all $r\in[R,1]$.

To study convergence, we define the truncated series
\begin{equation}
f_m(r,\theta)=\sum_{-m<k<m}a_k(r)e^{ik\theta}
\end{equation}
and an error function
\begin{equation}
g_m(r)=\aabs{f(r,\cdot)-f_m(r,\cdot)}_{L^2(S^1)}^2
\end{equation}
for $m\geq0$.

Since $f\in L^2(A)$, we have $f(r,\cdot)\in L^2(S^1)$ for almost all $r\in[R,1]$.
Thus for almost all~$r$ the Fourier series for~$f(r,\cdot)$ converges in~$L^2(S^1)$, and $0\leq g_m(r)\to0$ monotonically almost everywhere as $m\to\infty$.
Monotonicity implies, in particular, that $g_m\leq g_0$ pointwise.
Since $g_0(r)=\aabs{f(r,\cdot)}_{L^2(S^1)}^2$, we know that $g_0\in L^1([R,1])$.
Therefore we may use the dominated convergence theorem to conclude that
\begin{equation}
\label{eq:fourier-conv}
\begin{split}
\lim_{m\to\infty}\aabs{f-f_m}_{L^2(A)}^2
&=
\lim_{m\to\infty}\int_R^1 g_m(r) r\der r
\\&=
\int_R^1 \lim_{m\to\infty}g_m(r) r\der r
=
0.
\end{split}
\end{equation}
The estimate $g_m\leq g_0\in L^1$ shows that $f_m\in L^2(A)$ for all~$m$, as can be seen in the above calculation.

Now $f_m\in L^2(A)$, so
\begin{equation}
\label{eq:fourier-norm-partial}
\begin{split}
\aabs{f_m}_{L^2}^2
&=
2\pi\sum_{\abs{k}<m}\int_R^1 \abs{a_k(r)}^2 r\der r
<\infty.
\end{split}
\end{equation}
Thus $a_k\in L^2([R,1])$, and the norm representation~\eqref{eq:fourier-norm} follows easily from~\eqref{eq:fourier-conv} and~\eqref{eq:fourier-norm-partial}.
\end{proof}

\section{The Funk transform}
\label{sec:funk}

The Funk transform takes a function on the sphere~$S^n$ to a its integrals over geodesics.
It is a classical result (see lemma~\ref{lma:funk-smooth} below) that the Funk transform of a smooth function vanishes if and only if the function is odd with respect to the antipodal reflection.
We extend this result to distributions --- the result is probably not new but we give an explicit proof as we have been unable to locate one in the literature.

\subsection{Funk transforms on smooth functions}

Let~$G^n_k$ be the Grassmannian of $k$-dimensional subspaces of~$\mathbb R^n$.
We will work with $\tilde G^n_k=G^{n+1}_{k+1}$ instead so that $S^n\cap P$ has dimension~$k$ for all $P\in\tilde G^n_k$.

For $0<k<n$, we define the Funk transform $F^n_k\colon C^\infty(S^n)\to C^\infty(\tilde G^n_k)$ by
\begin{equation}
F^n_kf(P)
=
\fint_{P\cap S^n}f(x)\der x.
\end{equation}
All integrals we encounter in this section come with a natural measure on a sphere or a Grassmannian, and we only take average integrals.
We could include the cases $k=0$ and $k=n$, but these would only correspond to taking the even part and the average of the function.

We will use the integral operators $I^n_{k,l}\colon C^\infty(\tilde G^n_k)\to C^\infty(\tilde G^n_l)$ for $k\leq l$ defined by
\begin{equation}
I^n_{k,l}f(P)
=
\fint_{P\subset Q}f(Q)\der Q.
\end{equation}
This transform is known as the Radon transform on the Grassmannian~\cite{R:grassmann-radon}.
These operators satisfy
\begin{equation}
\label{eq:funk-identities}
\begin{split}
I^n_{k,k}&=\id,\\
I^n_{k,l}\circ I^n_{m,k}&=I^n_{m,l},\quad\text{and}\\
I^n_{k,l}\circ F^n_k&=F^n_l
\end{split}
\end{equation}
whenever the indices are such that the operators are defined.

\subsection{Duality}

The operators introduced above have natural dual operators with respect to~$L^2$ inner products on the manifolds involved.
These are $(F^n_k)^*\colon C^\infty(\tilde G^n_k)\to C^\infty(S^n)$ given by
\begin{equation}
(F^n_k)^*f(x)
=
\fint_{x\in P}f(P)\der P
\end{equation}
and $(I^n_{k,l})^*\colon C^\infty(\tilde G^n_l)\to C^\infty(\tilde G^n_k)$ given by
\begin{equation}
(I^n_{k,l})^*f(P)
=
\fint_{Q\subset P}f(Q)\der Q.
\end{equation}
These will allow defining the transforms~$I^n_{k,l}$ and~$F^n_K$ on distributions by duality.

On a compact manifold~$M$, let us denote the duality pairing between~$C^{-\infty}(M)$ and~$C^\infty(M)$ by~$\ip{\cdot}{\cdot}_M$.
We now define $F^n_k\colon C^{-\infty}(S^n)\to C^{-\infty}(\tilde G^n_k)$ by letting
\begin{equation}
\ip{F^n_kf}{g}_{\tilde G^n_k}
=
\ip{f}{(F^n_k)^*g}_{S^n}
\end{equation}
for all $f\in C^{-\infty}(S^n)$ and $g\in C^\infty(\tilde G^n_k)$.
When restricted to smooth functions, this agrees with the original definition.

Similarly, we define $I^n_{k,l}\colon C^{-\infty}(\tilde G^n_k)\to C^{-\infty}(\tilde G^n_l)$ by letting
\begin{equation}
\ip{I^n_{k,l}f}{g}_{\tilde G^n_l}
=
\ip{f}{(I^n_{k,l})^*g}_{\tilde G^n_k}
\end{equation}
for all $f\in C^{-\infty}(\tilde G^n_k)$ and $g\in C^\infty(\tilde G^n_l)$.

\begin{lemma}
\label{lma:funk-identities}
The identities~\eqref{eq:funk-identities} hold also when the transforms are defined in the sense of distributions.
\end{lemma}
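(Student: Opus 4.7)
The plan is to reduce each distributional identity to the already-known identity on smooth functions by adjunction, and the only work is bookkeeping.

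Concretely, the distributional definition means that for any composition $T = T_1 \circ \cdots \circ T_r$ of operators among $\{I^n_{a,b}, F^n_k\}$, and for $f \in C^{-\infty}$ and $g \in C^\infty$,
\begin{equation}
\ip{Tf}{g} = \ip{f}{T_1^* \circ \cdots \circ T_r^* g},
\end{equation}
where the adjoints are applied in the reverse order and compose properly on smooth functions. (All the adjoint operators are the natural averaging maps written after the duality definitions; in particular they take smooth functions to smooth functions on compact manifolds, so the right-hand composition is meaningful.) Consequently, to verify that two compositions agree as maps of distributions it suffices to verify the reversed composition of their adjoints agrees as a map of smooth functions.

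Applying this reduction to the three identities in~\eqref{eq:funk-identities}, I need to check, on smooth functions,
\begin{equation}
(I^n_{k,k})^* = \id,\quad (I^n_{m,k})^* \circ (I^n_{k,l})^* = (I^n_{m,l})^*,\quad (I^n_{k,l})^* \circ (F^n_l)^* = (F^n_k)^*.
\end{equation}
But each of these is literally the adjoint, in the $L^2$ sense on the relevant sphere or Grassmannian, of the corresponding identity in~\eqref{eq:funk-identities} already proved on $C^\infty$. Taking adjoints of an operator identity reverses the order of composition, so these three adjoint identities follow immediately from the three smooth identities.

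The only thing that needs a brief remark is why the adjoint formulas written just before the statement really are adjoints in the $L^2$ pairing on these manifolds: this is a Fubini/symmetry computation on compact homogeneous spaces (or a direct application of the smooth coarea formula, as used in the proof of lemma~\ref{lma:lp-restriction}), giving $\ip{Af}{g} = \ip{f}{A^*g}$ for smooth $f,g$ when $A$ is any of the integral operators introduced. I expect no real obstacle here: the $G_2^n$-type averages are with respect to rotation-invariant probability measures and the fibers in the incidence relations $\{P \subset Q\}$ and $\{x \in P\}$ have constant measure, so Fubini applies without any weight factor. Once these adjoint formulas are in place, the distributional identities drop out as described.
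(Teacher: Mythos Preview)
Your approach is exactly the paper's: reduce the distributional identities to the smooth ones via the adjoint rule $(A\circ B)^*=B^*\circ A^*$, which is precisely what the paper's one-line proof invokes. Two slips to fix: the general formula should read $\ip{Tf}{g}=\ip{f}{T_r^*\circ\cdots\circ T_1^*g}$ (reverse the order), and the third adjoint identity you need is $(F^n_k)^*\circ(I^n_{k,l})^*=(F^n_l)^*$ --- the version you wrote, $(I^n_{k,l})^*\circ(F^n_l)^*=(F^n_k)^*$, does not type-check since $(F^n_l)^*$ lands in $C^\infty(S^n)$ while $(I^n_{k,l})^*$ expects input in $C^\infty(\tilde G^n_l)$.
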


\begin{proof}
The proofs are straightforward calculations based on the definitions and the identity $(A\circ B)^*=B^*\circ A^*$ for the relevant operators~$A$ and~$B$ acting on smooth functions.
\end{proof}

\subsection{Convolutions}

The group~$SO(n+1)$ acts naturally on~$S^n$ and~$\tilde G^n_k$.
The integral transforms~$F^n_k$ and~$I^n_{k,l}$ and their duals are equivariant under this action.
We will use this action to define convolutions on our function spaces.

For $\eta\in C^\infty(SO(n+1))$ and $f\in C^{-\infty}(S^n)$, we define the convolution $\eta*f\in C^\infty(S^n)$ by
\begin{equation}
\eta*f(x)
=
\int_{SO(n+1)}\eta(g)f(gx)\der g,
\end{equation}
where~$\der g$ is the normalized Haar measure on the group~$SO(n+1)$.
We define the convolution similarly if $f\in C^\infty(\tilde G^n_k)$.

It is a straightforward calculation to observe that
\begin{equation}
\label{eq:conv-equivariance}
\eta*((F^n_k)^*f)
=
(F^n_k)^*(\eta*f)
\end{equation}
for any $\eta\in C^\infty(SO(n+1))$ and $f\in C^\infty(S^n)$.

Alternatively one can define the convolution first for only smooth functions and then extend to the case $f\in C^{-\infty}(S^n)$ by duality.
We denote by~$\eta'$ the reflected version of a function $\eta\in C^\infty(SO(n+1))$, given by $\eta'(g)=\eta(g^{-1})$.
One can define the convolution of a distribution $f\in C^{-\infty}(S^n)$ and a smooth function $\eta\in C^\infty(SO(n+1))$ so that
\begin{equation}
\label{eq:distr-conv}
\ip{\eta*f}{h}_{S^n}
=
\ip{f}{\eta'*h}_{S^n}
\end{equation}
for every $h\in C^\infty(S^n)$.
This agrees with the definition given above.

\subsection{Characterization of the kernel}
\label{sec:funk-proof}

Now we have the tools needed to characterize the kernel of the Funk transform on distributions.
The corresponding result for smooth functions is a classical one:

\begin{lemma}[{\cite[Theorem~1.7, Section~1B, Chapter~III]{book-helgason}}]
\label{lma:funk-smooth}
For any two integers $0<k<n$ the kernel of the Funk transform~$F^n_k$ on smooth functions is precisely the space of odd functions.
\end{lemma}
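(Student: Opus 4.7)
The plan is to prove both inclusions of the kernel equality, handling the easy direction by a symmetry argument and reducing the harder one to the classical Funk--Radon case $k=n-1$, which I would then attack via spherical harmonic decomposition. The easy direction is immediate: every $P\in\tilde G^n_k$ is a linear subspace of $\R^{n+1}$, so $P\cap S^n$ is a great $k$-sphere invariant under the antipodal map $x\mapsto -x$, and the integral of any odd~$f$ over such a set vanishes. Hence $F^n_k f=0$ whenever $f$ is odd.

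For the nontrivial direction, I would first reduce to $k=n-1$ using the composition identity $I^n_{k,n-1}\circ F^n_k=F^n_{n-1}$ from \eqref{eq:funk-identities}: the assumption $F^n_k f=0$ forces $F^n_{n-1}f=0$, so it suffices to treat the classical Funk--Radon setting. There I would expand $f\in C^\infty(S^n)$ as an $L^2$-convergent series $f=\sum_{l\geq 0} f_l$ of spherical harmonic components of degree~$l$, with the sum converging in $C^\infty$ because $f$ is smooth. Each such summand is an irreducible $SO(n+1)$-representation, and $F^n_{n-1}$ intertwines the natural $SO(n+1)$-actions on source and target. Because $\tilde G^n_{n-1}\cong\mathbb{RP}^n=S^n/\{\pm 1\}$, the target space is naturally identified with the even-degree harmonics on~$S^n$, so by Schur's lemma $F^n_{n-1}$ acts on each degree-$l$ summand as a scalar $\lambda_l$ times a canonical inclusion, with $\lambda_l=0$ automatic when~$l$ is odd.

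The remaining task is to show $\lambda_l\neq 0$ for even~$l$. Applying $F^n_{n-1}$ to the zonal spherical harmonic $Y_l(x)=C_l^{(n-1)/2}(x\cdot e)$ and evaluating at $\nu=e$ collapses the average over $\nu^\perp\cap S^n$ to the constant value $C_l^{(n-1)/2}(0)$, giving $\lambda_l=C_l^{(n-1)/2}(0)/C_l^{(n-1)/2}(1)$. The Gegenbauer polynomial $C_l^{(n-1)/2}$ is even (respectively odd) in its argument when $l$ is even (respectively odd), and its value at the origin is nonzero precisely when $l$ is even. Consequently $F^n_{n-1}f=0$ forces every even-degree component $f_l$ to vanish, and $f$ must be odd.

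The main obstacle I expect is the eigenvalue computation together with the verification that $C_l^{(n-1)/2}(0)\neq 0$ for even~$l$; this is classical but requires some manipulation of Gegenbauer polynomials and their parity. An alternative route would be to import Helgason's inversion formula for the sphere directly, as in the cited reference, which avoids the representation-theoretic bookkeeping at the cost of introducing a distinct piece of analytic machinery.
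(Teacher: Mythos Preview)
The paper does not give its own proof of this lemma: it is stated with a citation to Helgason's book and used as a black box in the proof of theorem~\ref{thm:funk}. So there is no argument in the paper to compare against beyond the citation itself.

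Your proof sketch is correct and is one of the standard routes to the result. The reduction to $k=n-1$ via the identity $I^n_{k,n-1}\circ F^n_k=F^n_{n-1}$ is exactly the observation the paper makes (in the paragraph following the proof of theorem~\ref{thm:funk}), and the spherical-harmonic diagonalization of $F^n_{n-1}$ that you outline is essentially the Funk--Hecke computation. The eigenvalue formula $\lambda_l=C_l^{(n-1)/2}(0)/C_l^{(n-1)/2}(1)$ is right, and the nonvanishing of $C_l^{(n-1)/2}(0)$ for even $l$ follows from the explicit value $C_{2m}^{\alpha}(0)=(-1)^m\Gamma(\alpha+m)/(\Gamma(\alpha)\,m!)$ with $\alpha=(n-1)/2>0$. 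Your alternative suggestion---importing Helgason's inversion formula directly---is precisely what the paper does by citing the reference.
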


\begin{theorem}
\label{thm:funk}
For any two integers $0<k<n$ the kernel of the Funk transform~$F^n_k$ on distributions is precisely the space of odd distributions.
\end{theorem}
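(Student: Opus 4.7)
The plan is to split the equality into two inclusions. The easy one says every odd distribution lies in the kernel. For this I would observe that for any $g\in C^\infty(\tilde G^n_k)$, the dual $(F^n_k)^*g(x)=\fint_{x\in P}g(P)\der P$ depends only on the set of subspaces through~$x$, and since each $P\in\tilde G^n_k$ is a linear subspace we have $x\in P\iff -x\in P$. Hence $(F^n_k)^*g$ is an even smooth function on~$S^n$, and for any odd distribution~$f$ the pairing $\ip{F^n_kf}{g}_{\tilde G^n_k}=\ip{f}{(F^n_k)^*g}_{S^n}$ vanishes (because the pairing of an odd distribution with an even test function is zero, which follows from the definition of the antipodal pullback on distributions).

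The content is the other inclusion: a distribution $f\in C^{-\infty}(S^n)$ with $F^n_kf=0$ must be odd. My strategy is to regularize by convolution with an approximate identity $\{\eta_\eps\}_{\eps>0}\subset C^\infty(SO(n+1))$ supported in smaller and smaller neighborhoods of the identity, and then invoke the smooth case (lemma~\ref{lma:funk-smooth}). The key intermediate point is that $F^n_k$ commutes with convolution also on distributions. Concretely, for any $h\in C^\infty(\tilde G^n_k)$, one computes
\begin{equation*}
\ip{F^n_k(\eta_\eps*f)}{h}_{\tilde G^n_k}
=\ip{\eta_\eps*f}{(F^n_k)^*h}_{S^n}
=\ip{f}{\eta_\eps'*(F^n_k)^*h}_{S^n},
\end{equation*}
and then by the smooth equivariance~\eqref{eq:conv-equivariance} applied to~$h$ we replace $\eta_\eps'*(F^n_k)^*h$ by $(F^n_k)^*(\eta_\eps'*h)$, which identifies the right-hand side with $\ip{\eta_\eps*F^n_kf}{h}_{\tilde G^n_k}$. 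Hence $F^n_k(\eta_\eps*f)=\eta_\eps*F^n_kf=0$. Since $f_\eps\coloneqq\eta_\eps*f$ is smooth, lemma~\ref{lma:funk-smooth} gives that $f_\eps$ is an odd function on~$S^n$ for every~$\eps$.

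To finish I would pass to the limit in the distributional sense. The approximate identity property yields $f_\eps\to f$ in $C^{-\infty}(S^n)$ as $\eps\to0$. Let $\sigma\colon S^n\to S^n$ denote the antipodal map and define the distributional pullback $\sigma^*$ by $\ip{\sigma^*u}{h}_{S^n}=\ip{u}{h\circ\sigma}_{S^n}$; oddness of a distribution~$u$ means $\sigma^*u=-u$. Since $\sigma^*$ is continuous on $C^{-\infty}(S^n)$, from $\sigma^*f_\eps=-f_\eps$ (valid pointwise because $f_\eps$ is a smooth odd function) we obtain $\sigma^*f=-f$ in the limit, so $f$ is odd as a distribution.

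The only mildly delicate point is the equivariance step, i.e.\ verifying $F^n_k(\eta*f)=\eta*F^n_kf$ at the level of distributions; this is what the definitions in~\eqref{eq:distr-conv} and~\eqref{eq:conv-equivariance} were set up for, so it reduces to a chase through the duality. Everything else (existence of a smooth approximate identity on the compact Lie group $SO(n+1)$, the smoothness and convergence $\eta_\eps*f\to f$ in $C^{-\infty}(S^n)$, and continuity of $\sigma^*$) is standard.
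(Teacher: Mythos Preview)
Your proof is correct and follows essentially the same approach as the paper: both regularize by convolution with an approximate identity on $SO(n+1)$, use the commutation of $F^n_k$ with convolution (derived from~\eqref{eq:conv-equivariance} and~\eqref{eq:distr-conv} exactly as you indicate), apply lemma~\ref{lma:funk-smooth} to the smooth mollifications, and pass to the limit. The only cosmetic difference is that the paper first reduces to the even part of~$f$ and shows it vanishes, whereas you directly conclude that each $\eta_\eps*f$ is odd and pass this property to the limit via continuity of the antipodal pullback.
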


\begin{proof}
It is easy to observe that any odd distribution is indeed in the kernel.
It therefore suffices to show that an even distribution with vanishing Funk transform has to vanish.

Let~$f$ be an even distribution on~$S^n$ so that $F^n_kf=0$.
Take any smooth functions $\eta\in C^\infty(SO(n+1))$ and $h\in C^\infty(\tilde G^n_k)$.
Combining equations~\eqref{eq:conv-equivariance} and~\eqref{eq:distr-conv} gives
\begin{equation}
\ip{f}{(F^n_k)^*(\eta'*h)}
=
\ip{f}{\eta'*(F^n_k)^*h}
=
\ip{\eta*f}{(F^n_k)^*h}
\end{equation}
and hence
\begin{equation}
F^n_k(\eta*f)
=
\eta*(F^n_kf)
=
0
.
\end{equation}
(That is, the intertwining property~\eqref{eq:conv-equivariance} holds for distributions.)
Since~$\eta*f$ is an even smooth functions, it follows from lemma~\ref{lma:funk-smooth} that $\eta*f=0$.
This holds for any $\eta\in C^\infty(SO(n+1))$.

If we take a sequence $(\eta_m)_{m\in\N}$ of smooth functions on~$SO(n+1)$ converging weakly to the delta function at the identity element, we have $f=\lim_{m\to\infty}(\eta_m*f)$ in the sense of distributions.
Since each~$\eta_m*f$ vanishes, we must have $f=0$.
\end{proof}

Since~$I^n_{k,n-1}$ and~$F^n_k$ take even distributions to even distributions and we have the identity $I^n_{k,n-1}\circ F^n_k=F^n_{n-1}$ by lemma~\ref{lma:funk-identities}, it would have sufficed to show the desired result for $k=n-1$ above.

\begin{remark}
Let us briefly outline an alternative proof of theorem~\ref{thm:funk} (see~\cite{mathoverflow:funk}).
By equation~\eqref{eq:conv-equivariance} the group~$SO(n+1)$ intertwines with the Funk transform.
Therefore the kernel is a closed $SO(n+1)$-invariant subspace.
This implies that $SO(n+1)$-finite functions (functions contained in finite dimensional representations) are dense in it.
All $SO(n+1)$-finite distributions are in fact smooth, and smooth elements in the kernel are known to be odd.
Therefore all distributions in the kernel are limits of odd functions and therefore odd.
For more details, see~\cite{H:groups-and-geometric-analysis}.
\end{remark}

\section{Geodesics and broken rays in spherical symmetry}
\label{sec:geod}

In this section we will first review the basics of geodesics in spherical symmetry and then calculate integrals over functions along these geodesics.

\subsection{Basic facts about geodesics}
\label{sec:geodesic}

If we intersect our manifold $M\subset\R^n$ (defined in the introduction) with any linear subspace of~$\R^n$, we get a totally geodesic submanifold satisfying the Herglotz condition.
Therefore it suffices to study geodesics in the case $n=2$.

In two dimensions it is convenient to use polar coordinates~$(r,\theta)$.
A geodesic is uniquely determined by the location of its tip, the point with smallest~$r$.
Let us denote the coordinates of this tip by~$(r_0,\theta_0)$ and call~$r_0$ the radius of the geodesic.
There are two conserved quantities: the squared speed $c(r(t))^{-2}[r'(t)^2+r(t)^2\theta'(t)^2]=1$ and the angular momentum $c(r(t))^{-2}r(t)^2\theta'(t)=r_0/c(r_0)$.
These two quantities are conserved also when~$c$ has a jump discontinuity --- these conservation laws at a jump point give Snell's law.

These two conserved quantities allow one to carry out most calculations quite simply.
Let us calculate the length of a geodesic with a tip at radius~$r$.
Suppose the geodesic is parametrizesd by arc length as $\gamma\colon[-L,L]\to M$.
Due to symmetry, the length is $2\int_0^L\der t$.
We change the variable of integration from time $t\in(0,L)$ to radius $s\in(r,1)$.
The Herglotz condition ensures that this change of variable is possible, and that $L<\infty$.
Using the two conserved quantities, we find that
\begin{equation}
\begin{split}
\frac{\der s(t)}{\der t}
&=
\sqrt{c(s(t))^2-s(t)^2\theta'(t)^2}
\\&=
\sqrt{c(s(t))^2-r^2c(r)^{-2}c(s(t))^4s(t)^{-2}}.
\end{split}
\end{equation}
After simplification, we find that the length is
\begin{equation}
\label{eq:geod-length}
2L(r)
\coloneqq
2\int_r^1\frac{1}{c(s)}\left(1-\left(\frac{rc(s)}{sc(r)}\right)^2\right)^{-1/2}\der s.
\end{equation}
In fact, the Herglotz condition is equivalent with all maximal geodesics having finite length.

Similarly the angular distance between the endpoints of the geodesic (measured on the universal cover, may exceed~$2\pi$) is
\begin{equation}
\label{eq:geod-angle}
2\alpha(r)
\coloneqq
2\int_r^1\frac{rc(s)}{c(r)s^2}\left(1-\left(\frac{rc(s)}{sc(r)}\right)^2\right)^{-1/2}\der s.
\end{equation}
The integrand differs from that of~\eqref{eq:geod-length} by the term~$\theta'$ which can be found using conservation of angular momentum.

The lengths and angles can be expressed in this simple form regardless of discontinuities in~$c$ as long as the Herglotz condition is satisfied.
The total length or opening angle of a geodesic can be decomposed into parts between the jumps.
For each part where $c\in C^{1,1}$, one can use the above formulas with appropriate changes to the limits of integration.
The total integral is the sum of these parts.
This makes no difference in the formulas since the conserved quantities are conserved across the jumps.

\subsection{Integrals of functions over geodesics}
\label{sec:geod-int}

Suppose for now that $c\in C^{1,1}$ without any jumps.

We consider functions of the form $f(r,\theta)=a(r)e^{ik\theta}$ on~$M$, where $a\in L^2([R,1])$ and $k\in\Z$.
For convenience, we will assume that~$a$ is continuous in the calculations, but the conclusions will hold true for~$L^2$ functions with obvious modifications.
It follows from lemma~\ref{lma:fourier} that any function in~$L^2(A)$ can be written as a sum of such functions, so we lose no generality studying functions of the chosen form.

Consider a geodesic~$\gamma$ parametrized by $[-T,T]\ni t\mapsto(r(t),\theta_0+\omega(t))$ with $r(0)=r_0$ and $\omega(0)=0$.
Using the results of the previous subsection to convert arc length integrals to radius integrals and the identity $\omega(t)=\int_0^t\omega'(\tau)\der\tau$, we have
\begin{equation}
\label{eq:vv1}
\begin{split}
\int_\gamma f\der_g s
&\coloneqq
\int_{-T}^T f(\gamma(t))\der t
\\&=
\int_{-T}^T a(r(t))e^{ik(\theta_0+\omega(t))}\der t
\\&=
e^{ik\theta_0}
\sum_\pm
\int_0^T a(r(t))e^{\pm ik\omega(t)}\der t
\\&=
2e^{ik\theta_0}
\int_0^T a(r(t))\cos(k\omega(t))\der t
\\&=
2e^{ik\theta_0}
\int_{r_0}^1 a(r)
\\&\quad\times
\cos\left(k\int_{r_0}^r \frac{rc(s)^2}{s^2c(r)}H(s;r_0)\der s\right)H(r;r_0)\der r.
\end{split}
\end{equation}
Here
\begin{equation}
\label{eq:H-def}
H(r;z)
=
\frac1{c(r)}\left(1-\left(\frac{zc(r)}{rc(z)}\right)^2\right)^{-1/2}
.
\end{equation}
If we define an integral transform~$\A_k$ taking functions on~$[R,1]$ to functions on~$[R,1]$ by
\begin{equation}
\A_k g(x)
=
2\int_x^1 g(r)\cos\left(k\int_{x}^r \frac{rc(s)^2}{s^2c(r)}H(s;x)\der s\right)H(r;x)\der r,
\end{equation}
we have
\begin{equation}
\label{eq:int-fact}
\int_\gamma f\der_g s
=
e^{ik\theta_0}\A_k a(r_0).
\end{equation}
With this result injectivity of ray transforms can be reduced to injectivity of the integral transform~$\A_k$.

To simplify notation, we define
\begin{equation}
T_k(r;r_0)
=
\cos\left(k\int_{r_0}^r \frac{rc(s)^2}{s^2c(r)}H(s;r_0)\der s\right),
\end{equation}
so that
\begin{equation}
\label{eq:A-def}
\A_k g(x)
=
2\int_x^1 g(r)T_k(r;x)H(r;x)\der r.
\end{equation}
By the analogue to the generalized Abel transforms studied in~\cite{I:disk}, we call~$T_k$ the Chebyshev functions.

To make the integral transform more tractable, we make another change of variable: we change from~$r$ to $\roo r=\rho(r)=r/c(r)$, and denote other variables similarly with tildes.
The Herglotz condition makes~$\rho$ a diffeomorphism.
Multiplicative constants in the function~$c$ are irrelevant, so we may assume that $c(1)=1$ and so $\rho(1)=1$.
We first notice that
\begin{equation}
H(r;z)
=
\frac{\roo r}{c(r)}(\roo r+\roo z)^{-1/2}(\roo r-\roo z)^{-1/2};
\end{equation}
the singularity at $r=z$ is simpler in the new variables.
We denote
\begin{equation}
\roo T_k(\roo r;\roo z)
=
T_k(\rho^{-1}(\roo r);\rho^{-1}(\roo z))
\end{equation}
and
\begin{equation}
\roo H(\roo r;\roo z)
=
\frac{2\roo r}{c(\rho^{-1}(\roo r))\rho'(\rho^{-1}(\roo r))}(\roo r+\roo z)^{-1/2}.
\end{equation}
We write $\roo a=a\circ\rho^{-1}$ (this implies $\roo a(\roo r)=a(r)$) and define the transforms~$\roo\A_k$ so that $\roo\A_k\roo a(\roo x)=\A_k a(x)$.
In this notation
\begin{equation}
\label{eq:abel-cov}
\begin{split}
\roo\A_k\roo a(\roo x)
&=
\int_{\roo x}^{1}\roo a(\roo r) \roo T_k(\roo r;\roo x) \roo H(\roo r;\roo x) (\roo r-\roo x)^{-1/2} \der\roo r.
\end{split}
\end{equation}

\subsection{Attenuated integrals}
\label{sec:att-int}

In section~\ref{sec:geod-int} we calculated the geodesic ray transform of functions of the form $f(r,\theta)=a(r)e^{ik\theta}$ for $k\in\Z$.
Now we will calculate the attenuated geodesic ray transform for a spherically symmetric attenuation~$\lambda$ on~$A$; this calculation will be used to explicitly express the attenuated geodesic ray transform of a general function in~$L^2(A)$ in section~\ref{sec:xrt} below.

Let $\lambda\colon[R,1]\to\R$ be a continuous function, fix some geodesic $\gamma=\gamma_{r_0,\theta_0}$, and define
\begin{equation}
\label{eq:Ipm-def}
I_\pm
=
\int_{-T}^T \Lambda_\pm(t)f(\gamma(t))\der t,
\end{equation}
where
\begin{equation}
\label{eq:Lpm-def}
\Lambda_\pm(t)=\exp\left(\int_{-T}^{\pm t}\lambda(r(\pm s))\der s\right).
\end{equation}
Since $\Lambda_+(0)=\Lambda_-(0)$ and $r(-s)=r(s)$, we have
\begin{equation}
\Lambda_\pm(t)
=
\Lambda_+(0)\exp\left(\pm\int_0^t\lambda(r(s))\der s\right)
\end{equation}
and thus
\begin{equation}
I_++I_-
=
2\Lambda_+(0)\int_{-T}^T \cosh\left(\int_0^t\lambda(r(s))\der s\right) f(\gamma(t))\der t.
\end{equation}
Following the steps of equation~\eqref{eq:vv1}, we obtain
\begin{equation}
\label{eq:att-int-fact}
I_++I_-
=
4E^\lambda(r_0)e^{ik\theta_0}
\int_{r_0}^1 a(r)\Lambda^\lambda(r;r_0)T_k(r;r_0)H(r;r_0)\der r,
\end{equation}
where
\begin{equation}
\label{eq:Lambda-def}
\Lambda^\lambda(r;r_0)
=
\cosh\left(\int_{r_0}^r\lambda(u)H(u;r_0)\der u\right)
\end{equation}
and
\begin{equation}
\label{eq:E-def}
E^\lambda(r_0)
=
\exp\left(\int_{r_0}^{1}\lambda(r(s))H(s;r_0)\der s\right).
\end{equation}

\subsection{Abel-type integral transforms}
\label{sec:abel-integral}

We define the integral transform~$\A_k$ taking functions on~$[\roo R,1]$ to functions on~$[\roo R,1]$ by
\begin{equation}
\label{eq:rA-def}
\roo\A_k g(x)
=
\int_x^1g(r) \roo T_k(r;x) \roo H(r;x) (r-x)^{-1/2} \der r.
\end{equation}
The nonsingular part of the kernel of this integral transform, $K_k(r,x)=\roo T_k(r;x) \roo H(r;x)$ is Lipschitz continuous in~$r$ and~$x$ and $K_k(x,x)>0$ for all~$x$.
(The function~$\roo H$ is obviously Lipschitz, and~$\roo T_k$ can be shown to be, too, with a proof identical to that of lemma~\ref{lma:Lambda}.)
Therefore~$\roo\A_k$ falls in the category of integral transforms studied in section~\ref{sec:abel}.

Analogously with the Abel transform~$\A_k$ defined in equation~\eqref{eq:A-def} we define the attenuated Abel transform
\begin{equation}
\label{eq:A-att-def}
\A_k^\lambda g(x)
=
2\int_x^1 g(r)\Lambda^\lambda(r;x)T_k(r;x)H(r;x)\der r.
\end{equation}
Zero attenuation gives the previously defined Abel transform: $\A_k^0=\A_k$.

The Abel transforms in the following result correspond to any~$C^{1,1}$ wave speed satisfying the Herglotz condition.

\begin{lemma}
\label{lma:abel}
The integral transform~$\roo\A_k$ defined in~\eqref{eq:rA-def} has the following properties:
\begin{enumerate}
\item $\roo\A_k\colon L^p([\roo R,1])\to L^p([\roo R,1])$ is continuous whenever $1/2+1/p<1/q$.
The operator norm of~$\roo\A_k$ is bounded uniformly in~$k$.
\item If~$f$ is continuous or $f\in L^p$ for $p>2$, then~$\roo\A_k f$ is continuous.
\item When acting on functions in~$L^1([\roo R,1])$, the transform~$\roo\A_k$ is injective.
\item If $f\in L^1([\roo R,1])$ and $\roo\A_kf(x)=0$ for all $x\geq z$, then $f(x)=0$ for all $x\geq z$.
\end{enumerate}
Furthermore, the integral transform~$\A_k$ defined in~\eqref{eq:A-def} has the same properties when~$\roo R$ is replaced with~$R$ in the claims.

Moreover, the same is true for the transform~$\A_k^\lambda$ defined in~\eqref{eq:A-att-def} if $\lambda\colon[R,1]\to\R$ is Lipschitz continuous.
\end{lemma}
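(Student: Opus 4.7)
The strategy is to reduce each claim to the corresponding general result about generalized Abel transforms proved in section~\ref{sec:abel}. Writing $K_k(r,x) = \roo T_k(r;x)\roo H(r;x)$, the transform $\roo\A_k$ has the form $I^{1/2}_{K_k}$ from section~\ref{sec:abel} up to an affine change of variables mapping $[\roo R,1]$ onto $[0,1]$; such a change preserves every hypothesis of the theorems we invoke. Thus it suffices to verify that $K_k$ satisfies the structural hypotheses of theorems~\ref{thm:abel-cont}, \ref{thm:If-cont}, \ref{thm:If-lip}, and~\ref{thm:inj}.

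First I would record the basic properties of $K_k$. Since $\abs{\roo T_k(r;x)} \leq 1$ pointwise and $\roo H$ is bounded on $[\roo R,1]^2$ (the factor $(r+x)^{-1/2}$ is harmless because $\roo R > 0$), we get $\sup \abs{K_k} \leq C$ uniformly in $k$. On the diagonal $\roo T_k(x;x) = 1$ and $\roo H(x;x) > 0$, so $K_k(x,x) > 0$ with a positive lower bound uniform in $k$. Lipschitz continuity of $\roo H$ is immediate from its explicit formula, while Lipschitz continuity of $\roo T_k$ in a neighborhood of the diagonal follows the same pattern as the proof of the forthcoming lemma~\ref{lma:Lambda} referenced in the paragraph preceding the statement; the $k$-dependence of the Lipschitz constant is irrelevant for our purposes.

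Given these properties the four claims follow directly. Claim~(1) is theorem~\ref{thm:abel-cont} applied with $\alpha = 1/2$; the uniform-in-$k$ bound on the operator norm comes from the $\Order(\sup\abs{K_k})$ dependence in that theorem combined with the uniform bound on $\sup\abs{K_k}$. Claim~(2) follows from theorem~\ref{thm:If-cont} with $\alpha = 1/2$, since the condition $\alpha p/(p-1) < 1$ becomes exactly $p > 2$, together with theorem~\ref{thm:If-lip} for the continuous case. Claims~(3) and~(4) are direct applications of theorem~\ref{thm:inj} with $\alpha = 1/2$, using boundedness, Lipschitz continuity near the diagonal, and nonvanishing on the diagonal of $K_k$.

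For the untilded transform $\A_k$ one simply reverses the change of variables $r \mapsto \roo r = r/c(r)$ from section~\ref{sec:geod-int}. Since $\rho$ is a $C^{1,1}$ diffeomorphism with derivative bounded above and below (here the Herglotz condition is used), it preserves the relevant function space conditions and the support condition, so all four properties transfer unchanged. For the attenuated version $\A_k^\lambda$ the kernel is multiplied by $\Lambda^\lambda(r;x)$ from~\eqref{eq:Lambda-def}, which is Lipschitz in both variables when $\lambda$ is Lipschitz by the same style of argument used for $\roo T_k$, and satisfies $\Lambda^\lambda(x;x) = 1 > 0$; the rest of the verification is identical. The main technical obstacle, already flagged in the paragraph above the lemma, is establishing Lipschitz continuity of the oscillatory factors $\roo T_k$ and $\Lambda^\lambda$ near the diagonal despite the $(r-x)^{-1/2}$ type singularity in the inner integrals defining them, but this is exactly what lemma~\ref{lma:Lambda} is designed to handle.
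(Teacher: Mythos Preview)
Your proposal is correct and follows essentially the same route as the paper: identify $\roo\A_k$ as a transform of type $I^{1/2}_{K_k}$, verify that $K_k$ is bounded, Lipschitz near the diagonal (via the argument of lemma~\ref{lma:Lambda}), and positive on the diagonal, then invoke theorems~\ref{thm:abel-cont}, \ref{thm:If-cont}, \ref{thm:If-lip}, and~\ref{thm:inj}; transfer to $\A_k$ via the bi-Lipschitz diffeomorphism $\rho$; and handle the attenuated case by appending the factor $\Lambda^\lambda$ controlled by lemma~\ref{lma:Lambda}. The paper's own proof is just a terser version of exactly this.
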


\begin{proof}
The claims follow from theorem~\ref{thm:abel-cont}, theorem~\ref{thm:If-cont}, and theorem~\ref{thm:inj}.
The transforms~$\A_k$ and~$\roo\A_k$ only differ by the diffeomorphism~$\rho$ (which is bi-Lipschitz), so the claims for~$\A_k$ are equivalent with those for~$\roo\A_k$.
The results for~$\A_k^\lambda$ require additionally lemma~\ref{lma:Lambda} below.
\end{proof}

\begin{lemma}
\label{lma:Lambda}
For Lipschitz continuous $\lambda\colon[R,1]\to\C$ the function~$\Lambda^\lambda$ defined in~\eqref{eq:Lambda-def} is Lipschitz continuous on $\Delta=\{(r,x);R\leq x<r\leq1\}$.
It extends continuously to the diagonal $\{(r,r);R\leq r\leq1\}$ and has the constant value~$1$ on it.
\end{lemma}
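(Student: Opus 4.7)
The plan is to handle the continuous extension to the diagonal first and then tackle Lipschitz continuity on $\Delta$. The diagonal value is immediate: since $\lambda$ is bounded and $H(u;r_0)$ has only the integrable singularity $(u-r_0)^{-1/2}$, the inner integral $\Phi(r,r_0) := \int_{r_0}^r \lambda(u)H(u;r_0)\der u$ tends to $0$ as $r \to r_0^+$, so $\cosh(\Phi) \to 1$.

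The obstacle for the Lipschitz estimate is that $\Phi$ itself is only $1/2$-H\"older near the diagonal, so a naive bound of the form $\lip(\cosh\circ\Phi) \lesssim \lip(\Phi)$ fails. The key observation is that $\cosh(x) - 1 = x^2 g(x^2)$ for some entire $g$, so composition with $\cosh$ effectively squares the H\"older exponent near $0$ and should recover Lipschitz regularity. To exploit this I would first pass to the variables $\roo r = r/c(r)$ from section~\ref{sec:geod-int}, where $H$ factors as a Lipschitz function times $(\roo r - \roo{r_0})^{-1/2}$. The substitution $\roo u - \roo{r_0} = t^2$ then rewrites the inner integral as $\Psi(s,\roo{r_0}) := 2\int_0^s \tilde\psi(t,\roo{r_0})\der t$ with $s = \sqrt{\roo r - \roo{r_0}}$ and $\tilde\psi$ jointly Lipschitz (using the Lipschitz hypothesis on $\lambda$, the $C^{1,1}$ regularity of $c$, and the fact that $\roo R > 0$ keeps the factor $(\roo u + \roo{r_0})^{-1/2}$ smooth). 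Thus $\Psi$ is Lipschitz in $(s,\roo{r_0})$ and vanishes at $s = 0$, and the factorization $\Psi = s\tilde\Psi$ via $\tilde\Psi(s,\roo{r_0}) := 2\int_0^1 \tilde\psi(us,\roo{r_0})\der u$ yields a bounded Lipschitz $\tilde\Psi$.

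Combining these gives $\cosh(\Psi) - 1 = s^2 \tilde G(s,\roo{r_0})$ with $\tilde G := \tilde\Psi^2 g(s^2 \tilde\Psi^2)$ bounded and Lipschitz in $(s,\roo{r_0})$. The last step is to verify that $(\roo r,\roo{r_0}) \mapsto (\roo r - \roo{r_0})\tilde G(\sqrt{\roo r - \roo{r_0}},\roo{r_0})$ is Lipschitz on $\Delta$; a direct computation of $\partial_{\roo r}$ and $\partial_{\roo{r_0}}$ shows that the apparent $(2s)^{-1}$ singularity coming from differentiating $\sqrt{\roo r - \roo{r_0}}$ is tamed by the outer factor $s^2$, so the weak partial derivatives remain bounded. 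Composing with the bi-Lipschitz map $\rho$ transports the estimate back to $(r,r_0)$. The main nuisance will be the bookkeeping in verifying joint Lipschitz continuity of $\tilde\psi$ under the inverse diffeomorphism $\rho^{-1}$; once that is in place, the $\cosh$-squaring trick does all the real work.
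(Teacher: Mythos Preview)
Your approach is correct and shares the paper's central idea: both recognize that $\cosh$ is a Lipschitz function of $F^2$ (equivalently, $\cosh(x)-1=x^2g(x^2)$), so the whole lemma reduces to showing that $F^2$ is Lipschitz on $\Delta$ and vanishes on the diagonal, and both pass to the variables $\roo r=\rho(r)$ to make the Abel singularity explicit.

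The execution diverges. The paper does not desingularize via $\roo u-\roo x=t^2$ or factor $\Psi=s\tilde\Psi$; instead it records three pointwise estimates,
\[
|\roo F(\roo r;\roo x)|\le C(\roo r-\roo x)^{1/2},\qquad
\lip_1\roo F(\roo r;\roo x)\le C(\roo r-\roo x)^{-1/2},\qquad
\lip_2\roo F(\roo r;\roo x)\le C(\roo r-\roo x)^{-1/2},
\]
the last obtained by viewing $\roo F(\roo r;\cdot)$ as an Abel transform of $\roo\lambda$ and invoking the local Lipschitz estimate of theorem~\ref{thm:If-lip}. Multiplying gives $\lip_i\roo F^2\le 2|\roo F|\,\lip_i\roo F\le 2C^2$, and the proof is over. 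Your route is more constructive and self-contained (it does not call on the Abel-transform machinery of section~\ref{sec:abel}), but it pays for this with the chain-rule bookkeeping in your final step, where you must check that the $(2s)^{-1}$ from differentiating $s=\sqrt{\roo r-\roo r_0}$ is absorbed by the prefactor $s^2$. The paper's argument sidesteps that calculation entirely: once the three estimates are in hand, the product bound is a one-liner.
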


\begin{proof}
We define
\begin{equation}
F(r;x)=\int_{x}^r\lambda(u)H(u;x)\der u
\end{equation}
so that $\Lambda^\lambda(r;x)=\cosh(F(r;x))$.
It suffices to show that~$F^2$ is Lipschitz continuous and vanishes on the diagonal, since~$\cosh(F)$ is a Lipschitz function of~$F^2$.

We will again change variables from~$r$ and~$x$ to $\roo r=\rho(r)$ and $\roo x=\rho(x)$.
If we define~$\roo F$ by demanding $\roo F(\roo r;\roo x)=F(r;x)$ and $\roo\lambda=\lambda\circ\rho^{-1}$, we have (cf.~equation~\eqref{eq:abel-cov})
\begin{equation}
\roo F(\roo r;\roo x)
=
\int_{\roo x}^{\roo r}\roo\lambda(\roo u)\roo H(\roo u;\roo x)(\roo u-\roo x)^{-1/2}\der\roo u.
\end{equation}
Since~$\rho$ is bi-Lipschitz, we need to show that~$\roo F^2$ is Lipschitz and vanishes on the diagonal when~$\roo\lambda$ is Lipschitz.

Let~$\lip_1\roo F(\roo r;\roo x)$ denote the local Lipschitz constant of~$\roo F$ with respect to~$\roo r$ at~$(\roo r;\roo x)$, $\lip_2\roo F(\roo r;\roo x)$ similarly, and $\lip\roo\lambda(\roo x)$ the local Lipschitz constant of~$\roo\lambda$ at~$\roo x$.

Elementary estimates yield
\begin{equation}
\label{eq:l1F-est}
\lip_1\roo F(\roo r;\roo x)
\leq
C(\roo r-\roo x)^{-1/2}
\end{equation}
and
\begin{equation}
\label{eq:abs-F-est}
\abs{\roo F(\roo r;\roo x)}
\leq
C(\roo r-\roo x)^{1/2}
\end{equation}
for some constant~$C$.

For fixed~$\roo r$, the function $\roo F(\roo r;\cdot)$ is an Abel-type integral transform of~$\roo\lambda$ in the sense of section~\ref{sec:abel}.
Therefore we have from theorem~\ref{thm:If-lip} that
\begin{equation}
\label{eq:l2F-est}
\lip_2\roo F(\roo r;\roo x)
\leq
C(\roo r-\roo x)^{-1/2}
\end{equation}
for some constant~$C$.

Combining estimates~\eqref{eq:l1F-est}, \eqref{eq:abs-F-est} and~\eqref{eq:l2F-est} we get
\begin{equation}
\lip_i \roo F^2(\roo r;\roo x)
\leq
2\abs{\roo F(\roo r;\roo x)}\lip_i \roo F(\roo r;\roo x)
\leq
2C^2
\end{equation}
for $i=1,2$, and~$\roo F^2$ is Lipschitz.
Estimate~\eqref{eq:abs-F-est} shows that~$\roo F^2$ vanishes on the diagonal, and the proof is complete.
\end{proof}

\begin{remark}
\label{rmk:abel}
Proposition~\ref{prop:fact-inj} gives a simple inversion formula for some Abel transforms.
The transforms~$\roo\A_0$ and~$\A_0$ do not fall in this category, but a calculation verifies an inversion formula for~$\A_0$.
We do not know of such a simple formula for other~$k$, except in the Euclidean case (see~\cite[lemma~10(5)]{I:disk}).
The inversion formula for~$\A_0$ is
\begin{equation}
\begin{split}
f(r)
&=
-\frac{c(r)}{\pi}
\left.\Der{x}\int_{x}^{1}\frac{\rho'(x)}{\rho(x)}\left(\left(\frac{\rho(z)}{\rho(x)}\right)^2-1\right)^{-1/2}\A_0f(z)\der z\right|_{x=r}
,
\end{split}
\end{equation}
and it holds almost everywhere for $f\in L^1([R,1])$ and everywhere for continuous~$f$.
\end{remark}

\subsection{Broken rays}
\label{sec:br}

Due to spherical symmetry, studying broken rays is simple once one understands geodesics.
Just like a non-radial geodesic, a non-radial broken ray is contained in a unique two-dimensional subspace of~$\R^n$ and it therefore suffices to consider dimension two.

The geodesic segments that constitute a broken ray only differ by rotations of the plane.
Each of them has the same radius (radial coordinate of the tip), so we may call it the radius of the broken ray.
It follows from spherical symmetry that if two distinct rotations of a geodesic share an endpoint, then they together form a broken ray that satisfies the usual reflection condition: the angle of reflection equals the angle of incidence.

Not all broken rays are periodic.
Due to rotation symmetry periodicity only depends on the radius.
Using the angle~$\alpha$ defined in~\eqref{eq:geod-angle}, it is easy to see that the broken ray corresponding to $r\in(R,1)$ is periodic if and only if $\alpha(r)\in\pi\Q$.

To prove injectivity results for the periodic broken ray transform we need to have enough periodic broken rays.
One such result is provided by the following proposition, which assumes the countable conjugacy condition.
This proposition is the sole reason for making the assumption in the results presented in this paper.

\begin{proposition}
\label{prop:ccc-dense}
If a~$C^{1,1}$ wave speed satisfies the Herglotz condition (definition~\ref{def:herglotz}) and the countable conjugacy condition (definition~\ref{def:ccc}), then the set of radii corresponding to periodic broken rays is countable and dense in~$(R,1)$.
\end{proposition}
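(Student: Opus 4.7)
The plan is to translate periodicity into a statement about the opening-angle function $\alpha$ defined in~\eqref{eq:geod-angle}. By spherical symmetry every non-radial broken ray lies in a two-dimensional plane through the origin, so the analysis reduces to $n=2$. Using the half-angle $\alpha(r)$ between consecutive tip-to-boundary excursions, a broken ray of tip-radius $r$ is periodic precisely when $\alpha(r) \in \pi\Q$, so the proposition reduces to showing that $\alpha^{-1}(\pi\Q)$ is a countable dense subset of $(R,1)$.

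After the change of variable $x = \rho(r) = r/c(r)$, the integrand in~\eqref{eq:geod-angle} takes the form
\[
\tilde\alpha(x) = \int_x^1 (u^2 - x^2)^{-1/2} \phi(x,u) \, \der u
\]
covered by proposition~\ref{prop:abel-diff}, with a weight $\phi$ assembled from $c$ and $\rho$ and inheriting its regularity from $c \in C^{1,1}$. This gives that $\alpha$ is continuous on $(R,1)$ and continuously differentiable on the interior. The key geometric input is to identify \emph{conjugate endpoints of the maximal geodesic of radius $r$} with the zeros of $\alpha'$: the Jacobi field obtained by varying the tip radius through the radial family of geodesics vanishes at both endpoints exactly when first-order variations preserve the pair of boundary points, which is the condition $\alpha'(r) = 0$. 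Under this identification, the countable conjugacy condition says that $\alpha'$ has at most countably many zeros in $(R,1)$.

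Density is then immediate: if $\alpha$ were constant on a subinterval $J \subset (R,1)$, every $r \in J$ would give conjugate endpoints, contradicting the countable conjugacy condition, and so by continuity $\alpha(J)$ has non-empty interior and meets $\pi\Q$. For countability, fix $q \in \pi\Q$ and suppose $\alpha^{-1}(q)$ is uncountable. This closed subset of $(R,1)$ contains a non-empty perfect subset $P$ by the Cantor--Bendixson theorem, and at any $r \in P$ one may select a sequence $r_n \to r$ in $P$ with $\alpha(r_n) = \alpha(r)$; the mean value theorem then produces zeros of $\alpha'$ accumulating at $r$, so $\alpha'(r) = 0$ by continuity of $\alpha'$. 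Hence $P$ is an uncountable subset of the zero set of $\alpha'$, contradicting the countable conjugacy condition. Therefore each $\alpha^{-1}(q)$ is countable and $\alpha^{-1}(\pi\Q)$ is a countable union of countable sets.

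The step I expect to be most delicate is the identification of conjugate endpoints with critical points of $\alpha$ at low regularity: for $c \in C^{1,1}$ the weight $\phi$ in the Abel representation of $\alpha$ is only Lipschitz, which sits at the edge of applicability of proposition~\ref{prop:abel-diff}. One must either verify that its differentiability statement survives for $k = 1$ at this regularity, or bypass pointwise differentiability of $\alpha$ and argue directly from the Jacobi equation along the geodesic that conjugacy and vanishing of $\alpha'$ coincide. A minor accompanying point is the behaviour of $\alpha$ near $r = R$, which does not interfere with the density or countability statements once one restricts to any compact subinterval.
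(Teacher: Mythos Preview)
Your proposal is correct and follows precisely the approach the paper sketches: reduce periodicity to $\alpha(r)\in\pi\Q$, invoke proposition~\ref{prop:abel-diff} to obtain $\alpha\in C^1$, and identify boundary conjugate points with zeros of~$\alpha'$. The paper does not give a self-contained proof either, deferring the details to~\cite{HIK:spherical-spectral}; the regularity issue you single out --- whether the Abel weight is genuinely $C^1$ when $c\in C^{1,1}$, and hence whether $\alpha'$ is continuous as your Cantor--Bendixson argument uses --- is exactly why the paper calls the argument ``somewhat involved''.
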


The proof of the proposition is somewhat involved, so we will not present it here.
We only mention that by proposition~\ref{prop:abel-diff} the opening angle~$\alpha(r)$ defined in~\eqref{eq:geod-angle} is~$C^1$ and that conjugate points at the boundary correspond to zeros of the derivative~$\alpha'$.
A proof can be found in~\cite[Lemma~4.5]{HIK:spherical-spectral}.

\section{The X-ray transform}
\label{sec:xrt}

Let us recall the definitions of the X-ray transform and the attenuated X-ray transform on a manifold~$M$ with boundary.
The X-ray transform of a function $f\colon M\to\C$ is a function on the space of maximal geodesics.
Evaluated at a maximal geodesic $\gamma\colon[0,T]\to M$ it is
\begin{equation}
\rt f(\gamma)
=
\int_0^Tf(\gamma(t))\der t.
\end{equation}
The linear operator~$\rt$ is the X-ray transform.
Here the midpoint of the geodesic is~$\gamma(T/2)$, not~$\gamma(0)$.

Take any continuous function $\lambda\colon M\to\R$.
The attenuated X-ray transform with attenuation~$\lambda$ is defined via
\begin{equation}
\rt^\lambda f(\gamma)
=
\int_0^T\exp\left(\int_0^t\lambda(\gamma(s))\der s\right)f(\gamma(t))\der t.
\end{equation}
Notice that if the attenuation vanishes, then the attenuated X-ray transform is just the X-ray transform: $\rt^0=\rt$.


\begin{theorem}
\label{thm:xrt}
Let $M=\bar B(0,1)\setminus\bar B(0,R)$, $R\in(0,1)$ and $n\geq2$, with the metric $g(x)=c^{-2}(\abs{x})e(x)$ as in the introduction.
Suppose the radial wave speed~$c$ is piecewise~$C^{1,1}$ and satisfies the Herglotz condition.
Let~$\lambda$ be a radially symmetric Lipschitz continuous function $M\to\R$.
Then the attenuated X-ray transform~$\rt^\lambda$ is injective on~$L^2(M)$.

In particular, any function $f\in L^2(M)$ is uniquely determined by its integrals over all geodesics.
\end{theorem}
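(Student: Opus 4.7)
The plan is to reduce theorem~\ref{thm:xrt}, via Fourier decomposition in the angular direction, to the injectivity of the Abel-type transforms $\A_k^\lambda$ from lemma~\ref{lma:abel}, and to handle the jumps of $c$ by layer stripping.

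First I would reduce to two dimensions. For $n \geq 3$, each maximal geodesic in $M$ lies in a unique two-dimensional linear subspace through the origin, and such planes slice $M$ into totally geodesic annuli (Section~\ref{sec:geodesic}). Lemma~\ref{lma:lp-restriction} ensures $f|_{M \cap P} \in L^2(M \cap P)$ for a.e.\ $P \in G^n_2$, and conversely the identity~\eqref{eq:lp-identity} lets us conclude $f = 0$ in $L^2(M)$ from $f|_{M \cap P} = 0$ for a.e.\ $P$. Thus it suffices to treat $n = 2$. In that case, by lemma~\ref{lma:fourier} write $f(r,\theta) = \sum_k a_k(r) e^{ik\theta}$ with $a_k \in L^2([R,1]) \subset L^1([R,1])$.

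Next I would plug this expansion into equation~\eqref{eq:att-int-fact}. For the maximal geodesic $\gamma_{r_0,\theta_0}$ with tip at radius $r_0$ and angle $\theta_0$, summing over the two orientations and justifying the exchange of summation and integration via the uniform $L^1$-boundedness of $\A_k^\lambda$ in lemma~\ref{lma:abel}(1), the formula~\eqref{eq:att-int-fact} yields
\begin{equation*}
\rt^\lambda f(\gamma_{r_0,\theta_0}) + \rt^\lambda f(\gamma_{r_0,\theta_0}^{-1})
= 2 E^\lambda(r_0) \sum_{k\in\Z} e^{ik\theta_0}\A_k^\lambda a_k(r_0)
\end{equation*}
for almost every admissible $(r_0, \theta_0)$. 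If $\rt^\lambda f \equiv 0$ on all oriented geodesics, then the left side vanishes, and since $E^\lambda(r_0) > 0$ the uniqueness of Fourier series in $\theta_0$ forces $\A_k^\lambda a_k(r_0) = 0$ for every $k \in \Z$ and almost every $r_0 \in (R,1)$.

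Finally I would run layer stripping across the jump radii $1 = a_0 > a_1 > \dots > a_N = R$ of $c$. On the outermost smooth piece $(a_1, 1]$, geodesics with tip $\geq a_1$ never meet a jump, and the nonsingular kernel of the resulting Abel transform on $[a_1, 1]$ is Lipschitz with positive diagonal (by the discussion following~\eqref{eq:rA-def} together with lemma~\ref{lma:Lambda}); lemma~\ref{lma:abel}(4) then gives $a_k \equiv 0$ on $(a_1, 1]$ for every $k$. For the next layer, the already established vanishing cancels the outer part of the integral in~\eqref{eq:att-int-fact}, reducing the equation $\A_k^\lambda a_k(r_0) = 0$ (for $r_0 \in (a_2, a_1)$) to an Abel-type transform on $(a_2, a_1]$ of the same qualitative type --- the conserved angular momentum carries $H(r; r_0)$ and $T_k(r; r_0)$ smoothly across the jump at $a_1$ when one restricts to a single smooth layer. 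Applying lemma~\ref{lma:abel}(4) again gives $a_k \equiv 0$ on $(a_2, a_1]$, and iterating over the finitely many layers yields $a_k \equiv 0$ throughout $(R, 1]$, so $f = 0$.

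The main obstacle, I expect, is the layer-stripping bookkeeping in the presence of refractions: one has to check carefully that, after subtracting the known-zero outer contribution, the residual Abel transform on each remaining layer has a kernel still satisfying the Lipschitz and diagonal-positivity hypotheses of lemma~\ref{lma:abel}, and that the attenuation weight $\Lambda^\lambda$ behaves compatibly across jumps (using lemma~\ref{lma:Lambda}). A secondary technical point is rigorously justifying the term-by-term integration along geodesics for merely $L^2$ data, which should follow from the uniform bounds in lemma~\ref{lma:abel}(1) together with Fubini on the space of geodesics.
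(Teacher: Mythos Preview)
Your proposal is correct and uses the same ingredients as the paper: reduction to $n=2$ via lemma~\ref{lma:lp-restriction}, angular Fourier decomposition via lemma~\ref{lma:fourier}, the identity~\eqref{eq:att-int-fact} expressing the symmetrized attenuated transform through $\A_k^\lambda$, and injectivity from lemma~\ref{lma:abel}.

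The one structural difference is where the layer stripping sits. The paper strips layers \emph{before} the Fourier/Abel analysis, at the level of the X-ray transform itself: once $f$ is known to vanish on the outermost $C^{1,1}$ annulus $M'$, the attenuated integral over a geodesic with tip below $M'$ reduces (up to the nonzero factor $\exp\bigl(\int\lambda\bigr)$ accumulated while traversing $M'$) to the attenuated X-ray transform on the smaller annulus $M\setminus M'$, and the $C^{1,1}$ case is re-applied there as a black box. You instead strip layers \emph{after} Fourier decomposition, inside the Abel transforms. Both routes succeed, but the paper's order sidesteps precisely the obstacle you flagged --- checking that the residual Abel kernel on each inner layer remains Lipschitz with positive diagonal --- because on a genuinely $C^{1,1}$ annulus lemma~\ref{lma:abel} applies directly with no residual bookkeeping. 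Your secondary concern about exchanging the Fourier sum with the geodesic integral is handled in the paper exactly as you anticipate: the uniform $L^2\to L^2$ bound on $\A_k^\lambda$ from lemma~\ref{lma:abel} makes $\rt_0^\lambda\colon L^2(M)\to L^2(M)$ continuous, so the series identity follows from convergence of the partial sums.
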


\begin{remark}
The theorem is stated in a setting where a small ball of (coordinate) radius~$R$ is removed from a spherically symmetric manifold.
The theorem is also applicable in a whole ball without anything removed.
One can artificially remove a small ball of radius~$R$ from the center of the manifold and use the theorem for all $R>0$ to conclude that the attenuated X-ray transform is injective.
In fact, it suffices that all regularity assumptions (on~$f$, $\lambda$ and~$c$) are satisfied locally in the punctured ball --- the functions may blow up at the origin.
\end{remark}

\begin{remark}
Theorem~\ref{thm:xrt} can be seen as a support theorem.
It follows immediately from the theorem that if a function on the spherically symmetric manifold integrates to zero over all geodesics that stay outside a ball centered at the (coordinate) origin, then the function must vanish outside that ball.
In Euclidean geometry --- which is a special case of the theorem --- this is Helgason's famous support theorem~\cite[Theorem~2.6]{book-helgason}.
\end{remark}

\begin{remark}
Theorem~\ref{thm:xrt} provides injectivity, but not stability.
It is well known that a spherically symmetric metric satisfying the Herglotz condition can have conjugate points.
In dimension two the existence of conjugate points immediately implies instability for the unweighted X-ray transform~\cite{MSU:x-ray-cp}.
\end{remark}

\begin{proof}[Proof of theorem~\ref{thm:xrt}]
Fix a Lipschitz attenuation $\lambda\colon[R,1]\to\R$.
Suppose $f\in L^2(M)$ satisfies $\rt^\lambda f=0$.
We wish to show that $f=0$; this implies injectivity.

If we intersect our manifold~$M$ with a two dimensional subspace of~$\R^n$, we get a totally geodesic submanifold with radial symmetry.
The restriction of a function $f\in L^2(M)$ is still~$L^2$ for almost all of these submanifolds by lemma~\ref{lma:lp-restriction}.
If the theorem holds in dimension two, then~$f$ vanishes on almost all of these submanifolds and therefore on almost all of~$M$.
Therefore it suffices to prove the theorem in dimension two.

Suppose the result is true for~$C^{1,1}$ wave speeds.
Let $R<a_1<a_2<\dots<a_N<1$ be the points where~$c$ fails to be~$C^{1,1}$.
Consider the submanifold $M'\subset M$ where the radial coordinate is restricted to~$(a_N,1]$.
On~$M'$ the metric is~$C^{1,1}$ and satisfies the assumptions.
Applying the result on the manifold~$M'$ shows that $f|_{M'}=0$.

Then we may restrict our problem to $M\setminus M'$, since~$f$ vanishes elsewhere.
In the same way we can deduce that~$f$ must vanish also between radii~$a_{N-1}$ and~$a_N$.
Continuing this layer stripping argument at discontinuities of~$c$ eventually shows that~$f$ must vanish in all of~$M$.

Therefore it suffices to prove the theorem when $c\in C^{1,1}$.
It is again convenient to use polar coordinates.
We may write~$f(r,\theta)$ as a Fourier series in~$\theta$.
By lemma~\ref{lma:fourier} there is a sequence of functions $a_k\in L^2(R,1)$ so that
\begin{equation}
f(r,\theta)
=
\sum_{k\in\Z}a_k(r)e^{ik\theta},
\end{equation}
where the series converges in~$L^2(M)$.
Let us denote $f_k(r,\theta)=a_k(r)e^{ik\theta}$.

For any $(r,\theta)\in M$ there are two geodesics with their tip at this point.
They are reverses of each other.
If these geodesics are denoted by~$\gamma_\pm(r,\theta)$, we write
\begin{equation}
\rt^\lambda_0f(r,\theta)
=
\rt^\lambda f(\gamma_+(r,\theta))
+
\rt^\lambda f(\gamma_-(r,\theta)).
\end{equation}
Combining equations~\eqref{eq:att-int-fact} and~\eqref{eq:A-att-def}, we have
\begin{equation}
\rt^\lambda_0f_k(r,\theta)
=
e^{ik\theta}2E^\lambda(r)\A^\lambda_ka_k(r).
\end{equation}
See section~\ref{sec:att-int} for details.

It is easy to check that~$E^\lambda(r)$ is bounded, and lemma~\ref{lma:abel} guarantees that the integral transforms $A^\lambda_k\colon L^2(R,1)\to L^2(R,1)$ are equicontinuous.
Therefore $\rt^\lambda_0\colon L^2(M)\to L^2(M)$ is continuous and
\begin{equation}
\rt^\lambda_0f(r,\theta)
=
2E^\lambda(r)\sum_{k\in\Z}e^{ik\theta}\A^\lambda_ka_k(r).
\end{equation}
Since~$E^\lambda(r)$ never vanishes and we assumed that $\rt^\lambda f=0$, we have
$
\A^\lambda_ka_k=0
$
for every $k\in\Z$.
The Abel-type integral transform~$\A^\lambda_k$ is injective by lemma~\ref{lma:abel}, so in fact $a_k=0$ for all~$k$.
This means that~$f$ indeed vanishes identically.
\end{proof}

\section{The broken ray transform}
\label{sec:brt}

Injectivity results for the broken ray transform in the Euclidean disc or ball were shown in~\cite{I:disk}.
In this section we generalize those results to radially symmetric manifolds satisfying the Herglotz condition and the countable conjugacy condition.
The Euclidean metric satisfies both conditions.

We will only give abridged versions of the proofs, focusing on the new ingredients due to a non-Euclidean metric.
The missing details can be found in the Euclidean arguments presented in~\cite{I:disk}.

By the boundary~$\partial M$ of our spherically symmetric manifold we only mean the outer boundary where $r=1$.
Let us choose a subset $E\subset\partial M$.
The set~$E$ is the set of tomography, where broken rays have their initial and final points, and the set $\partial M\setminus E$ is where the broken rays reflect.
All measurements are therefore done on the set of tomography, and the rest of the boundary is essentially a mirror.

The broken ray transform takes a function on~$M$ into a function on the set of broken rays.
Injectivity of this transform depends on the set of tomography~$E$.
The bigger~$E$ is, the more data are available.

\begin{theorem}
\label{thm:brt}
Let $R\in[0,1)$ and $n\geq2$.
Let $M=\bar B(0,1)\setminus\bar B(0,R)\subset\R^n$ be equipped with a metric corresponding to a radially symmetric~$C^{1,1}$ wave speed that satisfies the Herglotz condition (definition~\ref{def:herglotz}) and the countable conjugacy condition (definition~\ref{def:ccc}) condition.
\begin{enumerate}
\item If $f\in C(M)$ is continuous and the set of tomography is a singleton, then integrals of~$f$ over broken rays uniquely determine the integral of~$f$ over any circle centered at the origin with the singleton in the circle's plane.
\item Suppose $f\colon M\to\C$ is uniformly quasianalytic in the angular variables in the sense of~\cite[Definition~14]{I:disk} and all its angular derivatives satisfy the Dini--Lipschitz condition.
If the set of tomography is open, the function~$f$ is uniquely determined by its integrals over all broken rays.
\end{enumerate}
\end{theorem}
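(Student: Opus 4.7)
The plan is to adapt the Euclidean argument of \cite{I:disk} to the spherically symmetric setting, with the Abel-type transforms $\A_k$ from section~\ref{sec:geod-int} and their injectivity/continuity from lemma~\ref{lma:abel} playing the role of the Euclidean Abel transforms.

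First I reduce to dimension two. Every non-radial (broken) ray lies in a unique two-plane through the origin, and each intersection $M\cap P$ is a totally geodesic spherically symmetric submanifold to which the hypotheses restrict. Lemma~\ref{lma:lp-restriction} (together with restriction statements appropriate for continuous and for quasianalytic functions) reduces the full claim to the two-dimensional case, applied to every two-plane containing the singleton in part~(1) and to almost every two-plane in part~(2). In dimension two I expand $f(r,\theta)=\sum_{k\in\Z}a_k(r)e^{ik\theta}$ via lemma~\ref{lma:fourier}. A broken ray of tip radius $r$ with $N$ segments is a chain of $N$ rotations of a single geodesic; applying the factorization~\eqref{eq:int-fact} segment by segment writes its integral as
\begin{equation}
\sum_{k\in\Z}e^{ik\theta_0}\A_k a_k(r)\sum_{j=0}^{N-1}e^{2ikj\alpha(r)},
\end{equation}
with $\alpha$ the opening angle from~\eqref{eq:geod-angle}, so that each measurement is a phase-weighted combination of the Abel-transformed modes $\A_k a_k(r)$.

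For part~(1), a broken ray returns to the singleton precisely when it is periodic, i.e.\ when $\alpha(r)\in\pi\Q$; by proposition~\ref{prop:ccc-dense} such radii form a dense subset of $(R,1)$. For a primitive $N$-segment closure the inner phase sum vanishes unless $N\mid k$. Selecting a sequence of admissible radii $r_m$ along which $N(r_m)\to\infty$ and using a smooth approximation (the Abel transforms $\A_k$ are uniformly continuous in $k$ by lemma~\ref{lma:abel}(1), while the Fourier tail of $f$ can be made small by mollification, with errors controlled by continuity of $f$) isolates in the limit the $k=0$ contribution, giving $\A_0 a_0(r_m)$. Continuity of $\A_0 a_0$ (lemma~\ref{lma:abel}(2)) extends this to all of $[R,1]$, and injectivity of $\A_0$ (lemma~\ref{lma:abel}(3)) recovers $a_0$; the circular average at radius $r$ is then $2\pi r\,a_0(r)$, as required.

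For part~(2), with $E$ open, for each fixed radius $r$ in a dense subset and each fixed reflection count $N$ the starting angle $\theta_0$ may be varied over a nonempty open arc while keeping both endpoints of the broken ray in $E$. Fourier analysis of the data in $\theta_0$ over this arc, combined with the quasianalyticity and the Dini--Lipschitz hypotheses (which ensure that knowing $\A_k a_k$ for only finitely many small $k$ already determines the remaining modes), extracts each $\A_k a_k(r)$ individually following the scheme of \cite{I:disk}. Injectivity of each $\A_k$ on $L^1$ via lemma~\ref{lma:abel} then forces $a_k\equiv 0$ for every $k$, and hence $f=0$. The main obstacle throughout is the mode-separation step: in part~(1) only the rotation-invariant mode survives because a single endpoint destroys all angular resolution, while in part~(2) quasianalyticity is precisely what compensates for having only an open arc, rather than the full circle, of starting angles available for Fourier analysis.
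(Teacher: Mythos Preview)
Your treatment of part~(1) is essentially the paper's argument: periodic broken rays with increasing reflection count single out the mode $k=0$, and injectivity of~$\A_0$ finishes. The paper phrases the limiting step more cleanly by taking the \emph{average} integral of~$f$ over~$\gamma_i$ and observing that this tends to $\A_0 a_0(z)/\A_0 1(z)$ because the normalized broken ray converges, as a measure, to a rotation-invariant one; this avoids your mollification detour, which is unnecessary for continuous~$f$.

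Part~(2), however, has a genuine gap: you have misidentified how quasianalyticity is used. It is \emph{not} the case that quasianalyticity lets one recover all modes from ``finitely many small~$k$'', and Fourier analysis in~$\theta_0$ over an open arc cannot separate the modes (vanishing of a Dini--Lipschitz Fourier series on an arc does not force the coefficients to vanish). The actual mechanism, both here and in~\cite{I:disk}, is this: since~$f$ integrates to zero over every broken ray, so does every angular derivative~$\partial_\theta^{2n}f$. Writing the integral of~$\partial_\theta^{2n}f$ over a broken ray~$\gamma$ with both endpoints near a chosen point of~$E$ gives
\[
\sum_{k\in\Z}(-k^2)^n\,S_k(\gamma)\,\A_k a_k(z_\gamma)=0\qquad\text{for every }n\in\N,
\]
with~$S_k(\gamma)$ the same phase-sum coefficient as in the Euclidean case. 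Uniform quasianalyticity is exactly the hypothesis that makes this infinite family of moment conditions force each term $S_k(\gamma)\A_k a_k(z_\gamma)$ to vanish individually. One then chooses~$\gamma$ \emph{non}-extendable to a periodic broken ray (equivalently $\omega\notin\pi\Q$), for which $S_k(\gamma)\neq0$; the corresponding radii~$z_\gamma$ are dense by proposition~\ref{prop:ccc-dense}, so each continuous function~$\A_k a_k$ vanishes on a dense set, hence everywhere, and lemma~\ref{lma:abel} gives $a_k=0$. Your outline should be rewritten around this angular-derivative/moment argument rather than Fourier analysis in the starting angle.
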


\begin{remark}
In the first case, if $R=0$ and~$f$ is continuous in the origin, then the broken ray transform of~$f$ determines~$f(0)$.
However, the broken ray transform does not determine the value of the function at any other point if the set of tomography is a singleton.
\end{remark}

\begin{remark}
It is an open problem whether the broken ray transform is injective on smooth functions in the Euclidean unit disc.
\end{remark}

\begin{proof}[Proof of theorem~\ref{thm:brt}]
It suffices to prove the theorem with $R>0$; the results for $R=0$ follow from combining the results for all positive~$R$.
It is enough to prove both statements in dimension two.

(1)
All broken rays are in fact periodic broken rays, since the initial and final points must coincide.
The set of radii corresponding to such broken rays is dense by proposition~\ref{prop:ccc-dense}.
In fact, for any $N\in\N$, the set of radii of broken rays with at least~$N$ reflections is dense.
This can be seen by induction, as excluding one number of reflections does not affect the density in the absence of conjugate points.

Fix any $z\in(R,1)$.
There is a sequence of broken rays~$\gamma_i$ so that the number of reflections on~$\gamma_i$ tends to infinity as~$i$ increases,~$\gamma_i$ meets~$E$ only at its endpoints and the smallest radial coordinate on~$\gamma_i$ tends to~$z$.
Using the aforementioned density result, one can find a sequence of periodic broken rays whose radius converges to~$z$ and whose number of reflections increases without bound.

The average integral of~$f$ over~$\gamma_i$ tends to
\begin{equation}
\frac{\A_0a_0(z)}{\A_01(z)},
\end{equation}
where~$\A_0$ is the Abel-type transform defined in~\eqref{eq:A-def}, $1$ denotes the constant function, and~$a_0$ is the radially symmetrized~$f$ (as in lemma~\ref{lma:fourier}).
In essence, the idea is that in the limit $i\to\infty$ the normalized integral over the broken ray~$\gamma_i$ tends to a rotation symmetric measure corresponding to~$\A_0$.
For details of this calculation, see~\cite[Section~2.3]{I:disk}.

It is easy to check that $\A_01(z)>0$ for $z<1$, so the data determine the function~$\A_0a_0$.
The transform~$\A_0$ is injective by lemma~\ref{lma:abel}, so the data determine~$a_0$.
This is exactly the claim.

(2)
Suppose~$f$ integrates to zero over all broken rays.
We may write it as a Fourier series as in lemma~\ref{lma:fourier}, and the series converges uniformly (cf.~\cite[Lemma~8]{I:disk}).
By the first part of the theorem we already know that $a_0=0$.

Since~$f$ integrates to zero over all broken rays, so do its angular derivatives of all orders.
We shall use angular derivatives of even orders,~$\partial_{\theta}^{2n}f$.

We may choose our polar coordinates~$(r,\theta)$ so that the point~$(1,0)$ is in~$E$.
Consider a broken ray~$\gamma$ whose initial point is at $\theta=-\omega$ and final point at $\theta=\omega$ for some small $\omega>0$.
Let~$z_\gamma$ denote the distance from~$\gamma$ to the origin in~$\R^2$.
Then the average of~$\partial_{\theta}^{2n}f$ over the broken ray~$\gamma$ is
\begin{equation}
\label{eq:vv2}
\sum_{k\in\Z}(-k^2)^nS_k(\gamma)\A_ka_k(z_\gamma).
\end{equation}
Here~$S_k(\gamma)$ is a coefficient depending on the broken ray defined in~\cite[Section~4.1]{I:disk}.
It has exactly the same form in the non-Euclidean setting.

To find formula~\eqref{eq:vv2}, one first needs to recall that if~$f$ is written as an angular Fourier series
\begin{equation}
f(r,\theta)
=
\sum_{k\in\Z}e^{ik\theta}a_k(r),
\end{equation}
then the integral of~$f$ over a geodesic with a tip at $(r,\theta)$ is
\begin{equation}
\rt f(r,\theta)
=
\sum_{k\in\Z}e^{ik\theta}\A_ka_k(r).
\end{equation}
The Euclidean version of this statement was given (implicitly) in~\cite[eq.~(37)]{I:disk} and the more general statement in radial symmetry follows form~\eqref{eq:int-fact}.
Suppose then that the corresponding geodesic has opening angle $\alpha(r)\in\pi\Q$ and the corresponding periodic broken ray has~$N$ reflections.
The integral over the boken ray is then
\begin{equation}
\sum_{l=1}^N\rt f(r,\theta+l\alpha).
\end{equation}
This sum over~$l$ can be done explicitly for each term~$e^{ik\theta}$, and this leads to the coefficients~$S_k(\gamma)$.
This argument is the same in Euclidean geometry; the only difference is in the definition of the Abel transforms~$\A_k$.

Since the sum~\eqref{eq:vv2} vanishes for all $n\in\N$, then $S_k(\gamma)\A_ka_k(z_\gamma)=0$ for all $k\in\Z$.
The assumption of quasianalyticity is needed for this step.
The coefficient~$S_k(\gamma)$ is non-zero whenever the broken ray~$\gamma$ cannot be extended to a periodic one --- or equivalently, whenever $\omega\notin\pi\Q$.
The set of corresponding~$z_\gamma$ is dense in~$(R,1)$ by proposition~\ref{prop:ccc-dense} (the complement is countable).
Thus each of the functions~$\A_ka_k$, $k\in\Z$, vanishes in a dense set.

We then turn to lemma~\ref{lma:abel} for properties of Abel transforms.
The functions~$a_k$ are continuous, and so are~$\A_ka_k$.
The transforms are injective, so all of the functions~$a_k$ must vanish.
This concludes the proof.
\end{proof}

\section{The periodic broken ray transform}
\label{sec:pbrt}

In this section we consider the periodic broken ray transform.
The related inverse problem is to reconstruct a function on a manifold with boundary from its integrals over all periodic broken rays.

It turns out that in spherical symmetry the transform does not contain enough information to recover the entire function.
In dimension three or higher one can recover the even part but very little information about the odd part.
In dimension two one can recover the circlewise average (like in theorem~\ref{thm:brt}) but very little other information.

The periodic broken ray transform was previously known to be injective in the Euclidean square but non-injective in the Euclidean disc~\cite{I:refl}.

\begin{theorem}
\label{thm:pbrt}
Let $R\in[0,1)$.
Let $M=\bar B(0,1)\setminus\bar B(0,R)\subset\R^n$ be equipped with a metric corresponding to a radially symmetric~$C^{1,1}$ wave speed that satisfies the Herglotz condition (definition~\ref{def:herglotz}) and the countable conjugacy condition (definition~\ref{def:ccc}) condition.
\begin{enumerate}
\item
\label{item:2yes}
For $n=2$:
Integrals over all periodic broken rays of a function in~$L^p(M)$, $p>3$, uniquely determine the spherical average (zeroth Fourier component with respect to the angular variable) of the function.
\item
\label{item:3yes}
For $n\geq3$:
Integrals over all periodic broken rays of a function in~$L^p(M)$, $p>3$, uniquely determine the symmetric part of the function.
\item
\label{item:2no}
For $n=2$:
Assume that the wave speed satisfies the finite conjugacy condition (definition~\ref{def:fcc}).
Let $Y\colon S^1\to\C$ be a trigonometric polynomial with zero average.
The space of such $a\in C_0^\infty(R,1)$ that the function $M\ni r\theta\mapsto a(r)Y(\theta)$ integrates to zero over all periodic billiard trajectories has finite codimension in~$C_0^\infty(R,1)$.
In particular, the periodic broken ray transform has an infinite dimensional kernel in~$C_0^\infty(M)$.
\item
\label{item:3no}
For $n\geq2$:
Assume that the wave speed satisfies the finite conjugacy condition (definition~\ref{def:fcc}).
If $Y\colon S^{n-1}\to\C$ is a finite sum of odd spherical harmonics, the space of such functions $a\in C_0^\infty(R,1)$ that the function $M\ni r\omega\mapsto Y(\omega)a(r)$ integrates to zero over all periodic broken rays has finite codimension in~$C_0^\infty(R,1)$.
In particular, the kernel of the periodic broken ray transform is infinite dimensional (but contained in the space of odd functions if $n\geq3$).
\end{enumerate}
\end{theorem}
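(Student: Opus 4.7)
The plan is to treat the positive parts~(\ref{item:2yes}) and~(\ref{item:3yes}) by extracting information from limits of increasingly dense periodic broken rays, and the negative parts~(\ref{item:2no}) and~(\ref{item:3no}) by exploiting the fact that most periodic broken rays are non-resonant and automatically annihilate angular Fourier modes of zero mean.

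For part~(\ref{item:2yes}) I would follow the template of theorem~\ref{thm:brt}(1). By proposition~\ref{prop:ccc-dense}, periodic broken rays with arbitrarily many reflections occur at a set of radii dense in~$(R,1)$, so for any $z\in(R,1)$ there is a sequence~$\gamma_i$ of such broken rays whose radii converge to~$z$ and whose number of reflections tends to infinity. The normalised arc length integral of~$f$ along~$\gamma_i$ converges, exactly as in the cited argument, to $\A_0 a_0(z)/\A_0 1(z)$, where~$a_0$ is the zeroth angular Fourier coefficient supplied by lemma~\ref{lma:fourier}; the hypothesis $p>3$ makes this limiting procedure meaningful for an $L^p$ density and ensures by lemma~\ref{lma:abel}(2) that $\A_0 a_0$ is continuous. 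Since $\A_0 1$ is strictly positive on~$[R,1)$, injectivity of~$\A_0$ from lemma~\ref{lma:abel}(3) recovers~$a_0$. For part~(\ref{item:3yes}) I would reduce to~(\ref{item:2yes}): by lemma~\ref{lma:lp-restriction} the restriction $f|_{M\cap P}$ lies in~$L^p$ for almost every two-plane~$P$, and periodic broken rays in~$P$ are periodic broken rays in~$M$. Applying~(\ref{item:2yes}) on~$M\cap P$ yields the average of~$f$ over every circle~$rS^{n-1}\cap P$; viewed as a function of~$P$ at fixed radius~$r$, this datum is exactly $F^{n-1}_1$ applied to $\omega\mapsto f(r\omega)$. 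By theorem~\ref{thm:funk} this determines the even part of this function on~$S^{n-1}$, i.e.\ the symmetric part of~$f$.

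For the negative results I would write the trial function $a(r)Y(\omega)$ in angular Fourier form in the two-plane containing a periodic broken ray~$\gamma$ of radius~$r_0$ with~$N$ reflections. Invoking the calculation from the proof of theorem~\ref{thm:brt}(2), the contribution of the $k$-th angular Fourier mode is $S_k(\gamma)\A_k a(r_0)$ with prefactor essentially
\begin{equation*}
S_k(\gamma)=\sum_{l=1}^N e^{ikl\alpha(r_0)},
\end{equation*}
which vanishes unless the resonance condition $k\alpha(r_0)\in 2\pi\Z$ holds. For part~(\ref{item:2no}), $Y$ has no zeroth Fourier mode and only finitely many nonzero modes~$k\in K$; at a non-resonant periodic radius~$r_0$ every contributing~$S_k$ vanishes, so the periodic broken ray integral is identically zero. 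Under the finite conjugacy condition the opening angle~$\alpha$ defined in~\eqref{eq:geod-angle} is $C^1$ by proposition~\ref{prop:abel-diff} with only finitely many critical points, hence piecewise strictly monotone on finitely many intervals. Since~$\alpha$ is bounded and~$K$ is finite, the set $\{r_0 : k\alpha(r_0)\in 2\pi\Z \text{ for some } k\in K\}$ is finite, so ``all periodic broken rays integrate to zero'' reduces to a finite list of linear constraints $\A_k a(r_0)=0$ on $a\in C_0^\infty(R,1)$ and cuts out a subspace of finite codimension. For part~(\ref{item:3no}) I would restrict to each two-plane~$P$: the trace of an odd spherical harmonic on the unit circle in~$P$ is odd on~$S^1$ and therefore has only odd nonzero Fourier modes and in particular no zeroth mode. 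The argument of~(\ref{item:2no}) applies in every plane, and by rotational symmetry of the metric the resonance set of radii is the same in every plane, so only finitely many linear conditions on~$a$ survive. When $n\geq 3$ the even part of any kernel element is already killed by part~(\ref{item:3yes}), so the kernel is contained in the odd functions.

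The main obstacle is the bookkeeping underlying the negative parts: one has to promote the merely countable density supplied by proposition~\ref{prop:ccc-dense} to an \emph{actually finite} list of nontrivial constraints. The key observations are that~$Y$ involves only finitely many angular Fourier modes, that~$\alpha$ takes values in a bounded interval, and that the finite conjugacy condition forces~$\alpha$ to be piecewise strictly monotone on finitely many pieces; these three together make the resonance set finite and render the finite-codimension claim meaningful.
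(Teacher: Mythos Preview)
Your outline matches the paper closely for parts~(\ref{item:3yes}), (\ref{item:2no}), and (\ref{item:3no}), but for part~(\ref{item:2yes}) you take a different route that, as written, has a gap.

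For part~(\ref{item:2yes}) the paper does \emph{not} use the limiting procedure of theorem~\ref{thm:brt}(1). That argument---average integrals over specific broken rays~$\gamma_i$ converging to $\A_0 a_0(z)/\A_0 1(z)$---is stated and justified only for continuous~$f$; for $f\in L^p$ the integral over a single chosen broken ray~$\gamma_i$ need not be defined, and weak convergence of the normalised arc-length measures does not pair against merely~$L^p$ densities. Your assertion that ``$p>3$ makes this limiting procedure meaningful'' is not substantiated. The paper instead averages the periodic broken ray data over all rotations of a periodic broken ray of fixed radius~$r$: this yields the \emph{planar average ray transform}~$\prt f(r)$, which by lemma~\ref{lma:main-part} equals~$\A_0 f_0(r)$ for every~$r$ once $p>2$. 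Since periodic radii are dense (proposition~\ref{prop:ccc-dense}) and~$\A_0 f_0$ is continuous (lemma~\ref{lma:abel}), vanishing of the data forces $\A_0 f_0\equiv 0$ and hence $f_0=0$. The averaging step replaces your pointwise limit by an integral that is automatically well defined. Your part~(\ref{item:3yes}) is then exactly the paper's argument (packaged there as theorem~\ref{thm:part}), but it inherits the gap from your part~(\ref{item:2yes}).

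For parts~(\ref{item:2no}) and~(\ref{item:3no}) your argument agrees with the paper's. Two small corrections: consecutive geodesic segments of a periodic broken ray have tips separated by~$2\alpha(r)$, so the relevant sum is $\sum_{l=0}^{m-1}e^{2ikl\alpha(r)}$ and the resonance condition is $k\alpha(r)\in\pi\Z$ (equivalently the index~$m$ divides~$k$), not $k\alpha(r)\in 2\pi\Z$. For part~(\ref{item:3no}) you should also state explicitly that the restriction of a degree-$d$ spherical harmonic to any two-plane is a trigonometric polynomial of degree at most~$d$, uniformly in the plane; the finiteness of the resonance set---and hence the finite-codimension conclusion---depends on this uniform bound on the Fourier degree.
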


\begin{remark}
We may without loss of generality assume that $R>0$ in the proof, and we do so without mention.
This choice eliminates radial geodesics.
Letting $R\to0$ easily gives the result for $R=0$ once it has been proven for all $R>0$.
\end{remark}

\begin{remark}
A version of this problem is considered in~\cite{HIK:spherical-spectral} in connection with spectral rigidity problems.
For rigidity within a radially symmetric class, one only needs to consider radially symmetric functions, and for those we do indeed have injectivity for $n\geq2$.
\end{remark}

The tools needed for this proof can be found in the subsections below.
Proofs of different parts of the theorem are given in sections~\ref{sec:pbrt3even} and~\ref{sec:pbrt2odd}.
Each part of the theorem is given a separate proof.

\subsection{The planar average ray transform}

Let~$G^n_2$ be the Grassmannian of two-dimensional subspaces of~$\R^n$.
Notice that every non-radial geodesic is contained in exactly one plane in~$G^n_2$ and that $M\cap P$ is a totally geodesic submanifold of~$M$ for any $P\in G^n_2$.
These submanifolds inherit properties from~$M$: the Herglotz condition, the countable conjugacy condition, absence of conjugate points, regularity, and also the negations of these conditions.

Let us now define the planar average ray transform~$\prt$, an integral transform closely related to the periodic broken ray transform.
The planar average ray transform~$\prt f$ of a sufficiently regular function $f\colon M\to\C$ is a function on $G^n_2\times(R,1)$.
If~$If(\gamma)$ denotes the integral of~$f$ over a geodesic~$\gamma$, then~$\prt f(P,r)$ is the average of~$If(\gamma)$ over all geodesics~$\gamma$ of radius~$r$ and contained in the plane~$P$.
If $n=2$, the set~$G^n_2$ is a singleton and we consider~$\prt f$ to be a function of radius only.

In dimension two, let~$f_0$ denote the angular average of a function $f\colon M\to\C$, that is, $f_0(r)=\fint_{\partial B(0,r)}f(x)\h^1(x)$.

In the lemmas below,~$\A_0$ is the Abel transform corresponding to the conformal factor~$c(r)$ defined by equation~\eqref{eq:A-def}.

\begin{lemma}
\label{lma:part-cont}
Assume the assumptions of theorem~\ref{thm:pbrt} and let $n=2$.
If $f\in C(M)$, then $\prt f(r)=\A_0f_0(r)$ for all $r\in(R,1)$.
\end{lemma}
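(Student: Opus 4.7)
The plan is to unwind the definition of $\prt f(r)$ in dimension two, apply Fubini, and use rotational symmetry to reduce the angular average to the spherical average $f_0$. Since $n=2$, the Grassmannian $G^n_2$ is a singleton, so $\prt f(r)$ is simply the average of $\rt f(\gamma)$ over the one-parameter family of geodesics of radius $r$, which are parametrized by the angular position $\theta_0 \in [0,2\pi)$ of the tip, with the uniform measure inherited from the rotation action.

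Concretely, I would write a geodesic of radius $r$ with tip at angle $\theta_0$ as $\gamma_{r,\theta_0}(t) = (r(t),\theta_0 + \omega(t))$ for $t\in[-T,T]$, where $r(t)$ and $\omega(t)$ depend only on $r$ (not on $\theta_0$), and $T=L(r)$ is the half-length computed in~\eqref{eq:geod-length}. Then by definition
\begin{equation*}
\prt f(r)
=
\fint_0^{2\pi}\int_{-T}^T f(r(t),\theta_0+\omega(t))\,\der t\,\der\theta_0.
\end{equation*}
Since $f$ is continuous and $M$ is compact, $f$ is bounded, and the integrand is bounded on the compact product $[-T,T]\times[0,2\pi]$, so Fubini applies. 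Swapping the order of integration gives
\begin{equation*}
\prt f(r)
=
\int_{-T}^T\left(\fint_0^{2\pi}f(r(t),\theta_0+\omega(t))\,\der\theta_0\right)\der t
=
\int_{-T}^T f_0(r(t))\,\der t,
\end{equation*}
where in the last step I used translation invariance of the Haar measure on $S^1$ to absorb the shift $\omega(t)$ and recognize the inner integral as the angular average $f_0(r(t))$.

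The final step is to identify $\int_{-T}^T f_0(r(t))\der t$ with $\A_0 f_0(r)$. This is exactly the content of the calculation leading to equation~\eqref{eq:vv1} in section~\ref{sec:geod-int} specialized to $k=0$: using the Herglotz condition, the change of variable from arc length $t$ to the radial coordinate $s$ is a diffeomorphism on $(0,T)$, and by symmetry in $t \leftrightarrow -t$ the integral is twice the integral over $(0,T)$, which becomes
\begin{equation*}
\int_{-T}^T f_0(r(t))\,\der t
=
2\int_{r}^1 f_0(s) H(s;r)\,\der s
=
\A_0 f_0(r),
\end{equation*}
where $T_0 \equiv 1$ is used and $H$ is the kernel from~\eqref{eq:H-def}. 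No major obstacle is expected; the only care needed is to justify Fubini (immediate from continuity of $f$ and finiteness of $T$, which follows from the Herglotz condition) and to verify that the parameter space of geodesics of fixed radius carries the rotation-invariant measure so that the inner average really produces $f_0$.
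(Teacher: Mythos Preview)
Your proof is correct and follows essentially the same approach as the paper: express the geodesic integral explicitly via the parametrization $(r(t),\theta_0+\omega(t))$, average over $\theta_0$ using translation invariance on $S^1$, and convert the arc-length integral into the radial integral defining $\A_0$. The only cosmetic difference is the order of operations --- the paper first passes to the radial variable (via~\eqref{eq:vv1}) and then averages over $\theta$, while you first average over $\theta$ via Fubini and then change to the radial variable --- but the substance is identical.
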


\begin{proof}
Fix any $r\in(R,1)$.
We have
\begin{equation}
\int_{\gamma_{r,\theta}} f\der_g s
=
\sum_\pm
\int_{r_0}^1 f\left(r,\theta\pm\int_{r_0}^r \frac{rc(s)^2}{s^2c(r)}H(s;r_0)\der s\right)H(r;r_0)\der r
\end{equation}
for any angle $\theta\in S^1$; cf. equation~\eqref{eq:vv1}.
If we take the integral average over $\theta\in S^1$, the left-hand side becomes~$\prt f(r)$ and the right-hand side becomes~$\A_0f_0(r)$.
\end{proof}

\begin{lemma}
\label{lma:main-part}
Assume the assumptions of theorem~\ref{thm:pbrt}.
Let $n=2$ and fix an exponent $p\in[1,\infty)$.
For almost every $r\in(R,1)$ we have $\prt f(r)=\A_0 f_0(r)$.
If $p>2$, this holds for every~$r$.
\end{lemma}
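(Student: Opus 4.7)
The plan is to extend Lemma~\ref{lma:part-cont} from continuous functions to~$L^p$ by density. For $f\in L^p(M)\subset L^1(M)$, the angular average $f_0(r)=\fint_{S^1}f(r,\theta)\,\der\theta$ is defined for almost every $r\in(R,1)$ by Fubini, and $\A_0 f_0$ then makes sense by Lemma~\ref{lma:abel}. Similarly, parametrizing the annular region $\{\abs{x}\geq r\}\cap M$ by $(\theta_0,t)\mapsto\gamma_{r,\theta_0}(t)$ (a smooth two-to-one map away from radial geodesics, with bounded Jacobian on compact subannuli) and applying Fubini shows that the geodesic integral $\rt f(\gamma_{r,\theta_0})$ is defined for almost every pair $(r,\theta_0)$, so the $\theta_0$-average $\prt f(r)$ is defined for a.e.~$r$.

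Now approximate $f$ by continuous $f_n\to f$ in $L^p(M)$. The angular averages satisfy $(f_n)_0\to f_0$ in $L^p([R,1])$, since the weighted measure $r\,\der r$ is equivalent to Lebesgue measure on the compact interval bounded away from zero. By Theorem~\ref{thm:abel-cont}, $\A_0(f_n)_0\to\A_0 f_0$ in the corresponding $L^q$ space, hence pointwise a.e.\ along a subsequence. The same Fubini/change-of-variables argument used in the proof of Lemma~\ref{lma:part-cont}, now regarded as an identity of integrable functions, expresses $\prt$ as a bounded operator $L^1(M)\to L^1([R,1])$ (intuitively, $\prt f(r)$ is a weighted integral of~$f$ over $\{\abs{x}\geq r\}$). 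Thus $\prt f_n\to\prt f$ in $L^1([R,1])$. Since $\prt f_n=\A_0(f_n)_0$ pointwise by Lemma~\ref{lma:part-cont}, passing to the limit gives $\prt f(r)=\A_0 f_0(r)$ for almost every $r$.

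When $p>2$ the right-hand side $\A_0 f_0$ is continuous on $[R,1)$ by Lemma~\ref{lma:abel}. To upgrade from almost-everywhere to everywhere equality I would verify that the pointwise definition of $\prt f(r)$ is valid at every $r\in(R,1)$ and continuous in~$r$. The cleanest route is to show that for each fixed $r$, H\"older's inequality applied to the parametrization $(\theta_0,t)\mapsto\gamma_{r,\theta_0}(t)$ together with the explicit weight $H(\cdot;r)$ yields $\abs{\prt f(r)}\lesssim\aabs{f}_{L^p(M)}$ uniformly in $r$ on compact subsets of $(R,1)$, and that varying $r$ yields a continuous $L^p$-valued family — exactly the kind of estimate underlying Theorem~\ref{thm:If-cont}. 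Two continuous functions agreeing a.e.\ must agree everywhere, which finishes the improved claim.

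The main obstacle is the pointwise construction of $\prt f(r)$ when $p>2$: one must show not merely that $\prt$ extends as an $L^p(M)\to L^1([R,1])$ operator, but that it extends into $C((R,1))$ with continuous dependence on~$r$. The threshold $p>2$ matches exactly the threshold $\alpha p/(p-1)<1$ with $\alpha=\tfrac12$ appearing in Theorem~\ref{thm:If-cont} applied to~$\A_0$, so the hard work has in fact been done in Section~\ref{sec:abel}; the remaining step is bookkeeping through the $(\theta_0,t)$-parametrization.
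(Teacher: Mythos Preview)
Your density argument for the almost-everywhere statement is correct and matches the paper's approach in spirit: both extend lemma~\ref{lma:part-cont} from continuous to $L^p$ functions via approximation, with Fubini/Tonelli doing the real work. The paper phrases it slightly differently, fixing a good radius $r$ and arguing pointwise rather than invoking $L^1\to L^1$ boundedness of~$\prt$, but the content is the same.

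For $p>2$ the paper takes a shorter route than yours and sidesteps what you call the main obstacle. You propose to prove that $r\mapsto\prt f(r)$ is continuous and then match it to the continuous function $\A_0f_0$ via the a.e.\ identity. The paper instead re-runs the pointwise argument at \emph{every} $r$. The key is to apply the continuity of $\A_0\colon L^p\to C$ (lemma~\ref{lma:abel}) not to~$f_0$ but to~$\abs{f}_0$: since $\abs{f_n}\to\abs{f}$ in $L^p(M)$, one gets $\A_0\abs{f_n}_0\to\A_0\abs{f}_0$ uniformly, hence $\prt\abs{f_n}(r)=\A_0\abs{f_n}_0(r)$ is bounded in~$n$ at every single $r\in(R,1)$. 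Fatou then gives $\fint_{S^1}\int_{\gamma_{r,\theta}}\abs{f}<\infty$ at every~$r$, so Fubini applies and yields $\prt f(r)=\A_0 f_0(r)$ directly. No continuity of $\prt f$ is ever established or needed. Your route would also work, but it requires proving an $L^p\to C$ mapping property for $\prt$ itself, which is essentially redoing theorem~\ref{thm:If-cont} in a two-variable setting; the paper avoids this by leaning on the already-proven one-variable result for~$\A_0$ applied to~$\abs{f}_0$.
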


\begin{proof}
We wish to show that for almost every $r\in(R,1)$ (every~$r$ if $p>2$) our function $f\in L^p(M)$ is integrable over almost every geodesic of radius~$r$, and that $\prt f=\A_0f_0$.

Let us define the operator $F\colon L^p(M)\to L^p(R,1)$ by $Ff=\A_0f_0$.
The mapping $L^p(M)\ni f\mapsto f_0\in L^p(R,1)$ is clearly continuous, and so is $\A_0\colon L^p(R,1)\to L^p(R,1)$ (by lemma~\ref{lma:abel}).

Since continuous functions are dense, we can take a sequence of functions $f_k\in C(M)$ tending to~$f$ in~$L^p(M)$.
Since $F\abs{f_k}\to F\abs{f}$ in~$L^p(R,1)$, we can --- after passing to a subsequence --- assume that $F\abs{f_k}(r)\to F\abs{f}(r)$ and $f_k(r)\to f(r)$ for almost every $r\in(R,1)$.
If $p>2$, we have $F\abs{f_k}(r)\to F\abs{f}(r)$ for every~$r$.

Take any such~$r$.
By lemma~\ref{lma:part-cont} we have the bound
\begin{equation}
\begin{split}
\abs{\fint_{S_1}\int_{\gamma_{r,\theta}}f_k\der s\der\theta}
&\leq
\fint_{S_1}\int_{\gamma_{r,\theta}}\abs{f_k}\der s\der\theta
\\&=
F\abs{f_k}(r).
\end{split}
\end{equation}
Due to our choice of~$r$, this is a bounded sequence in~$k$.
Since $f_k\to f$ almost everywhere, we have by the dominated convergence theorem that
\begin{equation}
\fint_{S_1}\int_{\gamma_{r,\theta}}f\der s\der\theta
\end{equation}
exists and is finite.
This implies that~$\prt f(r)$ makes sense and actually $\prt f(r)=\A_0f_0(r)$.
\end{proof}

In the planar average ray transform we may restrict the set of radii to a subset of~$(R,1)$, as we will do next.
The following theorem characterizes the kernel of the planar average ray transform in dimensions $n\geq3$ as the set of odd functions.

\begin{theorem}
\label{thm:part}
Let $R\in[0,1)$.
Let $M=\bar B(0,1)\setminus\bar B(0,R)\subset\R^n$, $n\geq3$, be equipped with a metric corresponding to a radially symmetric~$C^{1,1}$ wave speed that satisfies the Herglotz condition (definition~\ref{def:herglotz}).
Fix a set $E\subset(R,1)$ and consider the planar average ray transform~$P$ restricted to radii in~$E$.
\begin{enumerate}
\item If~$E$ has full measure, then~$\prt$ is well defined and determines the even part on~$L^p$ for any $p\geq1$.
\item If~$E$ is dense, then~$\prt$ is well defined and determines the even part on~$L^p$ for any $p>2$.
\item The planar average ray transform~$\prt$ vanishes on all odd functions.
\end{enumerate}
That is, the kernel of~$\prt$ is precisely the set of odd functions (under either of the assumptions on~$E$ and~$p$ given above).
\end{theorem}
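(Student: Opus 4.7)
The plan is to prove the three items separately, with the core argument reducing the problem on $M$ to two successive inversions: the Abel transform $\A_0$ plane-by-plane, followed by the Funk transform $F^{n-1}_1$ sphere-by-sphere.

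For statement (3), I would argue by antipodal symmetry. Fix $P \in G^n_2$ and $r \in (R,1)$. The family of radius-$r$ geodesics in $P$ is naturally parametrized by the angular position of the tip on $S^1$. The map $x \mapsto -x$ is an isometry of our spherically symmetric metric, preserves $P$, and sends each radius-$r$ geodesic $\gamma$ to a distinct radius-$r$ geodesic $-\gamma$ in the same family; it thus realizes a fixed-point-free involution on the parameter circle. For odd $f$ the contributions $\rt f(\gamma)$ and $\rt f(-\gamma) = -\rt f(\gamma)$ cancel in pairs, so the $S^1$-average $\prt f(P,r)$ vanishes.

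For the injectivity direction, suppose $\prt f$ vanishes on $E$. By lemma~\ref{lma:lp-restriction}, $f|_P \in L^p(M \cap P)$ for almost every $P \in G^n_2$. For each such $P$, the slice $M \cap P$ is a two-dimensional spherically symmetric totally geodesic submanifold satisfying the Herglotz condition, and lemma~\ref{lma:main-part} applied there gives $\prt f(P,r) = \A_0 (f|_P)_0(r)$, valid for a.e.~$r$ under hypothesis (1) and for every $r$ under hypothesis (2). In case (1), $\A_0 (f|_P)_0$ vanishes on the full-measure set $E$, and the injectivity claim of lemma~\ref{lma:abel} forces $(f|_P)_0 \equiv 0$. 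In case (2), the hypothesis $p > 2$ makes $\A_0 (f|_P)_0$ continuous by lemma~\ref{lma:abel}(2), so density of $E$ again yields $(f|_P)_0 \equiv 0$. Next, fix $r$ in a full-measure subset of $(R,1)$ so that $f_r \coloneqq f|_{\partial B(0,r)}$ is an $L^1$ function on $S^{n-1}$ (which holds for a.e.~$r$ by Fubini). Up to a positive normalizing constant, $(f|_P)_0(r) = F^{n-1}_1 f_r(P)$. Since this vanishes for a.e.~$P$ and since $0 < 1 < n-1$, theorem~\ref{thm:funk} forces $f_r$ to be an odd distribution on $S^{n-1}$. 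Letting $r$ range over the full-measure subset and invoking Fubini one more time identifies $f$ with an odd element of $L^p(M)$.

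The main obstacle I anticipate is the bookkeeping of the nested ``almost every'' quantifiers: planes $P$, radii $r$, and angular variables interact through several uses of Fubini and through two independent injectivity statements, one for the Abel transform and one for the Funk transform. In case (1), where no pointwise continuity is available for $\A_0 (f|_P)_0$, one must be especially careful that the exceptional null sets of bad planes and bad radii do not obstruct the final conclusion about $f$; I expect this to be routine but worth writing out explicitly so that the distributional hypothesis of theorem~\ref{thm:funk} is verified for a generic sphere.
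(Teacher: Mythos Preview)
Your proposal is correct and follows essentially the same route as the paper's own proof: restrict to planes via lemma~\ref{lma:lp-restriction}, identify $\prt f(P,\cdot)$ with $\A_0(f|_P)_0$ via lemma~\ref{lma:main-part}, invoke the injectivity and continuity properties of~$\A_0$ from lemma~\ref{lma:abel} to kill the angular average on each plane, and then recognize $(f|_P)_0(r)$ as the Funk transform of $f|_{\partial B(0,r)}$ so that theorem~\ref{thm:funk} forces oddness sphere by sphere. Your treatment of part~(3) spells out the antipodal cancellation that the paper leaves as a one-line remark, and your closing paragraph correctly identifies the only delicate point (tracking the a.e.\ quantifiers over planes and radii), which the paper handles in the same routine way you anticipate.
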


\begin{proof}
The last conclusion follows immediately from symmetry considerations, so we focus on the first two.

Let $f\in L^p(M)$ and suppose $\prt f(P,r)=0$ for every $r\in E$ and almost every $P\in G^n_2$.
We wish to show that~$f$ has to be odd.

For almost every plane $P\in G^n_2$ the function $f^P=f|_{P\cap M}$ belongs to $L^p(P\cap M)$ due to lemma~\ref{lma:lp-restriction}.
The planar average ray transform of~$f^P$ can be considered a function of radius only.
By lemma~\ref{lma:main-part} we have $\A_0 f^P_0(r)=\prt f^P(r)=\prt f(P,r)=0$ for all $r\in E$.

We have $f^P_0\in L^p(R,1)$.
Let us consider the two remaining cases of the theorem:
\begin{enumerate}
\item If $p\geq1$, we know that $\A_0 f^P_0\in L^1(R,1)$ (see lemma~\ref{lma:abel}). Since~$E$ has full measure, we must have $\A_0 f^P_0=0$ (as an elment of~$L^1$).
\item If $p>2$, we know that $\A_0 f^P_0\in C(R,1)$ (see lemma~\ref{lma:abel}). Since~$E$ is dense, we have $\A_0 f^P_0=0$.
\end{enumerate}
Either way, the function~$\A_0 f^P_0$ vanishes.

Now injectivity of the Abel transform --- guaranteed by lemma~\ref{lma:abel} --- implies that $f^P_0(r)=0$ for almost every $r\in(R,1)$.
This means that for almost every $r\in(R,1)$ the function~$f|_{\partial B(0,r)}$ is in the kernel of the Funk transform, identifying spheres of different radii by scaling.
The function~$f$ is in~$L^1$ for almost every sphere, so by theorem~\ref{thm:funk} (with $k=1$) the function~$f$ must be odd on almost every sphere.
This concludes the proof.
\end{proof}

\subsection{Positive results}
\label{sec:pbrt3even}

We are now ready to start proving theorem~\ref{thm:pbrt}.
We give the positive results here and the negative ones in the next subsection.

\begin{proof}[Proof of theorem~\ref{thm:pbrt}, part~\ref{item:2yes}]
Consider $f\in L^p(M)$ with vanishing periodic broken ray transform.

Let $P\subset(R,1)$ be the set of radii corresponding to periodic broken rays.
By averaging over rotations, we observe that the periodic broken ray transform determines the planar average ray transform with radii restricted to the set~$P$.
The set~$P$ is dense by proposition~\ref{prop:ccc-dense}.

It follows from lemma~\ref{lma:main-part} that $\A_0f_0(r)=\prt f(r)$ for every $r\in(R,1)$.
Since~$\prt f$ vanishes in the dense set~$P$ and~$\A_0f_0$ is continuous by lemma~\ref{lma:abel}, we know that $\A_0f_0(r)=0$ for every $r\in(R,1)$.
It follows from injectivity of~$\A_0$ (due to lemma~\ref{lma:abel}) that $f_0=0$.
\end{proof}

\begin{proof}[Proof of theorem~\ref{thm:pbrt}, part~\ref{item:3yes}]
Suppose the periodic broken ray transform of $f\in L^p(M)$ is known.
We begin as in the previous proof and observe that the planar average ray transform of~$f$ is thus determined for radii in~$P$.
Theorem~\ref{thm:part} allows us to reconstruct the even part of~$f$ from this information since~$P$ is dense.
\end{proof}

\subsection{Negative results}
\label{sec:pbrt2odd}

It now remains to prove the non-injectivity results contained in theorem~\ref{thm:pbrt}.
In both claims we additionally assume the finite conjugacy condition.
It follows from this condition that preimages of singletons under the function~$\alpha$ defined by~\eqref{eq:geod-angle} are finite.
To see this, observe that by the mean value theorem~$\alpha'$ has to vanish between any two points with equal value of~$\alpha$, but the assumed condition implies that~$\alpha'$ has finitely many zeros.
A similar statement is true in the countable case, see~\cite[Lemma~4.5]{HIK:spherical-spectral}.

\begin{proof}[Proof of theorem~\ref{thm:pbrt}, part~\ref{item:2no}]
The angle~$\alpha(r)$ was discussed in section~\ref{sec:geodesic}, and it was observed in section~\ref{sec:br} that a trajectory of radius~$r$ is periodic if and only if $\alpha(r)\in\pi\Q$.
Let us now calculate the integrals of functions over these trajectories one Fourier component at a time.

Suppose $f(r,\theta)=a(r)e^{ik\theta}$ for some continuous function~$a$.
Due to equation~\eqref{eq:int-fact} the integral of~$f$ over the geodesic parametrized by~$(r,\theta)$ is~$e^{ik\theta}\A_ka(r)$.

We parametrize periodic trajectories like the geodesics that constitute them.
This parametrization is redundant --- each trajectory is described as many times as it has geodesic segments --- but it does not matter.
We say that a radius $r\in(R,1)$ has index $m\in\N$ if the corresponding billiard trajectory is periodic and has~$m$ reflections.
The index~$m$ of a radius~$r$ is the smallest natural number so that $m\alpha(r)\in\pi\N$.

Let~$r$ have index~$m$ and take any $\theta\in S^1$.
Then the integral of~$f$ over the trajectory given by~$(r,\theta)$ is
\begin{equation}
\brt f(r,\theta)
\coloneqq
\sum_{l=0}^{m-1}e^{ik(\theta+2l\alpha(r))}\A_ka(r)
=
e^{ik\theta}\A_ka(r)\sum_{l=0}^{m-1}e^{2ikl\alpha(r)}.
\end{equation}
Here~$\A_k$ is the Abel-type integral transform introduced in sections~\ref{sec:geod-int} and~\ref{sec:abel-integral}.
If $k\alpha(r)\in\pi\N$, then the sum is simply~$m$.
Otherwise it is zero, since $e^{2ikm\alpha(r)}=1$.

The condition $k\alpha(r)\in\pi\N$ is equivalent with~$m$ dividing~$k$, so the integral is
\begin{equation}
\brt f(r,\theta)
=
m\delta_{m\mid k}e^{ik\theta}\A_ka(r),
\end{equation}
where we have denoted
\begin{equation}
\delta_{m\mid k}
=
\begin{cases}
1 & \text{if } m \text{ divides } k\\
0 & \text{otherwise}.
\end{cases}
\end{equation}
Notice that if~$k$ is kept fixed and non-zero, this integral can only be nonzero for a finite amount of indices~$m$.
By standard convention every integer divides zero.

For an integer $m\geq1$, let us denote by $Q_m\subset(R,1)$ the set of radii of index~$m$.
Each set~$Q_m$ is finite, but the union~$\bigcup_{m\in\N}Q_m$ is dense by proposition~\ref{prop:ccc-dense}.

By assumption
\begin{equation}
Y(\theta)
=
\sum_{k=-K}^K y_ke^{ik\theta}
\end{equation}
for some natural number~$K$ and some coefficients~$y_k$.
The assumption of zero average translates to $y_0=0$.

For $a\in C_0^\infty(R,1)$, let us denote $f_a(r,\theta)=a(r)Y(\theta)$.
For a radius~$r$ of index~$m$ we have by the above considerations
\begin{equation}
\brt f_a(r,\theta)
=
\sum_{k=-K}^K y_ke^{ik\theta} m\delta_{m\mid k}\A_ka(r).
\end{equation}
In particular, $\brt f_a(r,\theta)$ can only be nonzero if
\begin{equation}
r\in Q^K\coloneqq\bigcup_{k=1}^KQ_k.
\end{equation}
Therefore the space
\begin{equation}
\begin{split}
E
&=
\{a\in C_0^\infty(R,1); \A_ka(r)=0
\\&\qquad
\text{for all }1\leq k\leq K\text{ and }r\in Q^K\}
\end{split}
\end{equation}
is a subspace of the space we set out to study and it suffices to show that~$E$ has finite codimension in~$C_0^\infty(R,1)$.
But since~$Q^K$ is finite, the elements of~$E$ only satisfy a finite number of linear conditions.
Therefore the codimension of~$E$ is indeed finite.
\end{proof}

\begin{proof}[Proof of theorem~\ref{thm:pbrt}, part~\ref{item:3no}]
This follows from part~\ref{item:2no}.
We only need to know that when an odd spherical harmonic is restricted to a two dimensional subspace (which after intersecting with the sphere means a great circle) is an odd trigonometric polynomial with order bounded by that of the spherical harmonic.
To see this, one can write the spherical harmonic as a harmonic polynomial of the same order as the spherical harmonic; its restriction to a subspace is a polynomial of at most the same order, and the order of a polynomial as a spherical harmonic is bounded by the order of the polynomial.
\end{proof}

\subsection*{Acknowledgements}

M.V.dH\ gratefully acknowledges support from the Simons Foundation under the MATH + X program, and the National Science Foundation under grant DMS-1559587.
J.I.\ was partly supported by an ERC starting grant (grant agreement no 307023) and by the Academy of Finland (decision 295853).
The second author is grateful for hospitality and support offered by Rice University during visits.
We would like to thank Mikko Salo and Matti Lassas for discussions.
We are grateful to the anonymous referees and Tuomas Hyt\"onen for valuable comments and suggestions.

\bibliographystyle{abbrv}
\bibliography{ip}

\begin{thebibliography}{10}

\bibitem{B:hyperbolic}
G.~Bal.
\newblock Ray transforms in hyperbolic geometry.
\newblock {\em J. Math. Pures Appl. (9)}, 84(10):1362--1392, 2005.

\bibitem{usarray2008}
S.~Burdick, C.~Li, V.~Martynov, T.~Cox, J.~Eakins, T.~Mulder, L.~Astiz, F.~L.
  Vernon, G.~L. Pavlis, and R.~D. van~der Hilst.
\newblock Upper mantle heterogeneity beneath north america from travel time
  tomography with global and usarray transportable array data.
\newblock {\em Seismological Research Letters}, 79(3):384--392, 2008.

\bibitem{VT:abel-review}
Y.~Colin De~Verdi{\`e}re and J.-P. Truc.
\newblock {Du probl{\`e}me du toboggan d'Abel au probl{\`e}me inverse
  semi-classique}.
\newblock {\em Bulletin de l'Union des Professeurs de Sp{\'e}ciales},
  228:25--42, 2009.

\bibitem{cormack}
A.~M. Cormack.
\newblock {Representation of a Function by Its Line Integrals, with Some
  Radiological Applications}.
\newblock {\em Journal of Applied Physics}, 34(9):2722--2727, 1963.

\bibitem{HIK:spherical-spectral}
M.~{de Hoop}, J.~Ilmavirta, and V.~Katsnelson.
\newblock {Spectral rigidity for spherically symmetric manifolds with
  boundary}.
\newblock 2017.
\newblock \href{http://arxiv.org/abs/1705.104340}{arXiv:1705.10434}.

\bibitem{DKSU:anisotropic}
D.~Dos Santos~Ferreira, C.~E. Kenig, M.~Salo, and G.~Uhlmann.
\newblock {Limiting Carleman weights and anisotropic inverse problems}.
\newblock {\em Inventiones mathematicae}, 178(1):119--171, 2009.

\bibitem{DA:prem}
A.~M. Dziewonski and D.~L. Anderson.
\newblock Preliminary reference earth model.
\newblock {\em Physics of the Earth and Planetary Interiors}, 25(4):297--356,
  1981.

\bibitem{eskin}
G.~Eskin.
\newblock {Inverse boundary value problems in domains with several obstacles}.
\newblock {\em Inverse Problems}, 20(5):1497--1516, 2004.

\bibitem{F:attenuated-x-ray}
D.~V. Finch.
\newblock The attenuated x-ray transform: recent developments.
\newblock In {\em Inside out: inverse problems and applications}, volume~47 of
  {\em Math. Sci. Res. Inst. Publ.}, pages 47--66. Cambridge Univ. Press,
  Cambridge, 2003.

\bibitem{GV:abelbook}
R.~Gorenflo and S.~Vessella.
\newblock {\em Abel Integral Equations: Analysis and Applications}, volume 1461
  of {\em Lecture Notes in Mathematics}.
\newblock Springer Berlin Heidelberg, 1991.

\bibitem{H:groups-and-geometric-analysis}
S.~Helgason.
\newblock {\em {Groups and Geometric Analysis: Integral Geometry, Invariant
  Differential Operators, and Spherical Functions}}.
\newblock Mathematical Surveys and Monographs. Academic Press, New York and
  Orlando, 2. edition, 1984.

\bibitem{book-helgason}
S.~Helgason.
\newblock {\em {The Radon Transform}}.
\newblock Birkh\"auser, 2. edition, 1999.

\bibitem{H:kinematic}
G.~Herglotz.
\newblock {\"U}ber die {E}lastizit{\"a}t der {E}rde bei {B}er{\"u}cksichtigung
  ihrer variablen {D}ichte.
\newblock {\em Zeitschr. f{\"u}r Math. Phys.}, 52:275--299, 1905.

\bibitem{H:square}
M.~Hubenthal.
\newblock The broken ray transform on the square.
\newblock {\em Journal of Fourier Analysis and Applications}, 20(5):1050--1082,
  2014.

\bibitem{H:brt-flat-refl}
M.~Hubenthal.
\newblock The broken ray transform in $n$ dimensions with flat reflecting
  boundary.
\newblock {\em Inverse Problems and Imaging}, 9(1):143--161, 2015.

\bibitem{I:disk}
J.~Ilmavirta.
\newblock Broken ray tomography in the disc.
\newblock {\em Inverse Problems}, 29(3):035008, 2013.

\bibitem{I:bdy-det}
J.~Ilmavirta.
\newblock {Boundary reconstruction for the broken ray transform}.
\newblock {\em Annales Academiae Scientiarum Fennicae Mathematica},
  39(2):485--502, 2014.

\bibitem{I:brt-thesis}
J.~Ilmavirta.
\newblock {\em On the broken ray transform}.
\newblock PhD thesis, University of Jyv\"askyl\"a, Department of Mathematics
  and Statistics, Report 140, 2014.

\bibitem{I:refl}
J.~Ilmavirta.
\newblock {A reflection approach to the broken ray transform}.
\newblock {\em Mathematica Scandinavica}, 117(2):231--257, 2015.

\bibitem{I:torus}
J.~Ilmavirta.
\newblock On {R}adon transforms on tori.
\newblock {\em Journal of Fourier Analysis and Applications}, 21(2):370--382,
  2015.

\bibitem{IS:brt-pde-1obst}
J.~Ilmavirta and M.~Salo.
\newblock {Broken ray transform on a Riemann surface with a convex obstacle}.
\newblock {\em Communications in Analysis and Geometry}, 24(2):379--408, 2016.

\bibitem{KS:calderon}
C.~Kenig and M.~Salo.
\newblock The {C}alder\'on problem with partial data on manifolds and
  applications.
\newblock {\em Anal. PDE}, 6(8):2003--2048, 2013.

\bibitem{KSU:maxwell}
C.~E. Kenig, M.~Salo, and G.~Uhlmann.
\newblock Inverse problems for the anisotropic maxwell equations.
\newblock {\em Duke Math. J.}, 157(2):369--419, 04 2011.

\bibitem{M:attenuated-xrt}
F.~Monard.
\newblock Inversion of the attenuated geodesic {X}-ray transform over functions
  and vector fields on simple surfaces.
\newblock {\em SIAM Journal on Mathematical Analysis}, 48(2):1155--1177, 2016.

\bibitem{MSU:x-ray-cp}
F.~Monard, P.~Stefanov, and G.~Uhlmann.
\newblock The geodesic ray transform on riemannian surfaces with conjugate
  points.
\newblock {\em Communications in Mathematical Physics}, 337(3):1491--1513,
  2015.

\bibitem{MT:calderon-abel}
M.~Mourou and K.~Trim\`eche.
\newblock Calder\'on's formula associated with a differential operator on
  $(0,\infty)$ and inversion of the generalized {A}bel transform.
\newblock {\em Journal of Fourier Analysis and Applications}, 4(2):229--245,
  1998.

\bibitem{book-natterer}
F.~Natterer.
\newblock {\em {The Mathematics of Computerized Tomography}}.
\newblock Society for Industrial and Applied Mathematics, 2001.

\bibitem{N:tomography-herglotz-wiechert}
R.~L. Nowack.
\newblock Tomography and the herglotz-wiechert inverse formulation.
\newblock {\em pure and applied geophysics}, 133(2):305--315, 1990.

\bibitem{PSU:anosov}
G.~P. Paternain, M.~Salo, and G.~Uhlmann.
\newblock Spectral rigidity and invariant distributions on {A}nosov surfaces.
\newblock {\em J. Differential Geom.}, 98(1):147--181, 2014.

\bibitem{radon}
J.~Radon.
\newblock {\"Uber die Bestimmung von Funktionen durch ihre Integralwerte
  l\"angs gewisser Mannigfaltigkeiten}.
\newblock {\em {Berichte der Sachsischen Akadamie der Wissenschaft}},
  69:262--277, 1917.

\bibitem{R:grassmann-radon}
B.~Rubin.
\newblock Radon transforms on affine {G}rassmannians.
\newblock {\em Transactions of the American Mathematical Society},
  356(12):5045--5070 (electronic), 2004.

\bibitem{SU:surface}
M.~Salo and G.~Uhlmann.
\newblock {The attenuated ray transform on simple surfaces}.
\newblock {\em J. Differ. Geom.}, 88(1):161--187, 2011.

\bibitem{S:tensor-book}
V.~A. Sharafutdinov.
\newblock {\em Integral geometry of tensor fields}.
\newblock Walter de Gruyter, 1994.

\bibitem{S:attenuated-x-ray}
V.~A. Sharafutdinov.
\newblock On emission tomography of inhomogeneous media.
\newblock {\em SIAM J. Appl. Math.}, 55(3):707--718, 1995.

\bibitem{S:rce-tensor}
V.~A. Sharafutdinov.
\newblock Integral geometry of a tensor field on a surface of revolution.
\newblock {\em Siberian Mathematical Journal}, 38(3):603--620, 1997.

\bibitem{S:xrt}
D.~C. Solmon.
\newblock {The X-ray transform}.
\newblock {\em Journal of Mathematical Analysis and Applications},
  56(1):61--83, 1976.

\bibitem{SU:bdy-rigitity-review}
P.~Stefanov and G.~Uhlmann.
\newblock Recent progress on the boundary rigidity problem.
\newblock {\em Electronic Research Announcements of the American Mathematical
  Society}, 11:64--70, 2015.

\bibitem{SS:real-analysis}
E.~M. Stein and R.~Shakarchi.
\newblock {\em {Real analysis: measure theory, integration, and Hilbert
  spaces}}.
\newblock Princeton lectures in analysis. Princeton University Press, 2005.

\bibitem{mathoverflow:funk}
sva (MathOverflow~user name).
\newblock Injectivity of the {F}unk transform for nonsmooth functions.
\newblock MathOverflow.
\newblock \url{http://mathoverflow.net/q/211229/55893} (version: 2015-07-11).

\bibitem{UV:local-x-ray}
G.~Uhlmann and A.~Vasy.
\newblock The inverse problem for the local geodesic ray transform.
\newblock {\em Inventiones mathematicae}, 205(1):83--120, 2016.

\bibitem{V:volterra1}
G.~Vainikko.
\newblock Cordial {V}olterra integral equations 1.
\newblock {\em Numerical Functional Analysis and Optimization},
  30(9--10):1145--1172, 2009.

\bibitem{V:volterra2}
G.~Vainikko.
\newblock Cordial {V}olterra integral equations 2.
\newblock {\em Numerical functional analysis and optimization}, 31(2):191--219,
  2010.

\bibitem{V:volterra-spline}
G.~Vainikko.
\newblock Spline collocation for cordial {V}olterra integral equations.
\newblock {\em Numerical functional analysis and optimization}, 31(3):313--338,
  2010.

\bibitem{V:volterra-abstract}
G.~Vainikko.
\newblock Cordial {V}olterra integral equations of the first kind.
\newblock
  \url{http://www.mai.liu.se/IPA2013/sidor/Abstracts/abstract/Vainikko.pdf},
  2013.
\newblock Abstract of a talk given at {IPA 2013}, April 2--6, Link\"oping.

\end{thebibliography}

\end{document}